\documentclass[11pt]{amsart}
\usepackage{amssymb, amsthm, epsfig, verbatim, amscd, enumerate, stmaryrd,
amsmath, psfrag}

\textwidth=15.7cm
\textheight=22cm
\hoffset=-1.7cm
\voffset=-1cm




\newtheorem{thm}{Theorem}[section]
\newtheorem{cor}[thm]{Corollary}
\newtheorem{lem}[thm]{Lemma}
\newtheorem{prop}[thm]{Proposition}

\theoremstyle{definition}
\newtheorem{defn}[thm]{Definition}
\newtheorem{conj}[thm]{Conjecture}

\newtheorem{rem}[thm]{Remark}
\theoremstyle{remark}

\numberwithin{equation}{section}

\mathsurround=1pt
\setlength{\parindent}{2em}
\setlength{\parskip}{0ex}

%
%

%
%

\newcommand{\al}{\alpha}
\newcommand{\be}{\beta}
\newcommand{\ga}{\gamma}

\newcommand{\de}{\delta}
\newcommand{\ep}{\varepsilon}

\newcommand{\ka}{\kappa}

\newcommand{\la}{\lambda}

\newcommand{\om}{\omega}

\newcommand{\Si}{\Sigma}

\newcommand{\Th}{\Theta}

\newcommand{\va}{\varphi}
\newcommand{\csi}{\xi}

%
%
\newcommand{\x}{\times}
\newcommand{\+}{\oplus}

\newcommand{\Z}{\mathbb Z}

\newcommand{\Q}{\mathbb Q}

\newcommand{\R}{\mathbb R}

\newcommand{\RP}{{\mathbb R}{P}}
\newcommand{\CP}{{\mathbb C}{P}}

\newcommand{\del}{\partial}

\newcommand{\co}{\colon\thinspace} 

\newcommand{\bls}[1]
{\operatorname{Bl}_{#1}}

\newcommand{\im}{\operatorname{im}}
\newcommand{\rank}{\operatorname{rank}}
\newcommand{\coker}{\operatorname{coker}}
\newcommand{\KO}{{K}_{\R}}
\newcommand{\rKO}{\widetilde {K}_{\R}}
\newcommand{\PD}{\operatorname{PD}}

%
%


\begin{document}
\mathsurround=1pt 
\title[Characteristic classes and existence of  singular maps]
{Relations among  characteristic classes and existence of  singular maps}

\keywords{Singularity, Morin map, fold map,  
blowup, Morse-Bott map, cobordism, Dold relations, 
geometric dimension.}

\thanks{2010 {\it Mathematics Subject Classification}.
Primary 57R45; Secondary 57R75, 57R25, 57R20.}

\author{Boldizs\'ar Kalm\'ar}
\address{
Alfr\'ed R\'enyi Institute of Mathematics,
Re\'altanoda u. 13-15, 1053 Budapest, Hungary}
\email{bkalmar@renyi.hu}
\thanks{The first author has been  partially supported by Magyary Zolt\'an Postdoctoral Fellowship
and  OTKA grant NK81203.}

\author{Tam\'as Terpai}
\address{
Alfr\'ed R\'enyi Institute of Mathematics,
Re\'altanoda u. 13-15, 1053 Budapest, Hungary}
\email{terpai@renyi.hu}
\thanks{The second author has been supported by OTKA grant NK81203.}



\begin{abstract}
We obtain  relations among the characteristic classes of a manifold $M$ 
admitting corank one maps.
Our relations yield strong restrictions on 
the cobordism class of $M$ and also
 nonexistence results for singular maps of 
the projective spaces.
We obtain our results through blowing up 
a manifold along the singular set of a smooth map and 
perturbing the arising non-generic corank one map.
\end{abstract}

\maketitle

\section{Introduction}

Let $M$ be a smooth closed
$n$-dimensional  manifold. 
In   \cite{Bott} it is shown that a subbundle $\csi$ of the tangent bundle $TM$
 is  tangent  to the leaves of a smooth foliation of $M$ (that is, $\csi$  is integrable) only if
the ring generated by the real Pontryagin classes of $TM/\csi$
vanishes in dimensions greater than $2(n -  \dim \csi)$.
The primary purpose of our paper 
is to  prove analogous vanishing theorems
about the Stiefel-Whitney and rational Pontryagin classes 
 in the case of ``smooth singular fibrations", i.e.\ singular maps of $M$. 
For $n > k \geq 0$ let
$Q$ be  a smooth $(n-k)$-dimensional manifold
and
let $f \co M \to Q$ be a smooth map. Denote by $\Si$ the set of
singular points of $f$. A point $p \in \Si$ is a $\Sigma^{i_1}$ singularity
of $f$, in notation $p \in \Sigma^{i_1}$, if the rank of the differential
$df$ is equal to $n-i_1$ at $p$. Inductively we define $\Sigma^{i_1, \ldots,
i_r} \subset M$, where $i_1 \geq \cdots \geq i_r \geq 0$, by taking the
$\Sigma^{i_r}$ points of the restriction $f |_{\Sigma^{i_1,\ldots,
i_{r-1}}}$\footnote{After a generic perturbation of $f$, we can assume
that $\Sigma^{i_1,\ldots, i_{r-1}}$ is a smooth submanifold of $M$, see
\cite{Bo}.}.
A {\it Morin map} is a smooth map with only 
$\Sigma^{k+1,1,\ldots,1,0}$ singularities (also called $A_m$-singularities,
where $m-1$ is the number of copies of ``$1$''). 
In the present paper, we show that 
the existence of a Morin map from  $M$ to $Q$
implies the vanishing of the ideal generated by the differences  
$w_I(TM) - w_J(TM) \in H^*(TM; \Z_2)$ of monomials of the same degree
consisting of Stiefel-Whitney 
classes of  sufficiently high degrees,
 where $I$ and $J$ run over all the multiindices with length $|I| = |J|$.
 In particular, we have

\begin{thm}\label{bevclass}
Let  $k$ be odd, 
 $M^n$ be orientable, and suppose there exists a Morin map $f \co
M \to \R^{n-k}$. Then
$$\prod_{j=1}^m w_{r_j}(TM) = \prod_{j=1}^m w_{s_j}(TM)$$
for any $m$ and collections $r_j, s_j$, $j=1, \dots, m$, which satisfy the
conditions $r_j, s_j \geq k+3$, $j=1, \ldots, m$, and $\sum_{j=1}^m r_j =
\sum_{j=1}^m s_j$.
 If $f$ is a fold map, then the same holds with $r_j, s_j \geq k+2$, $j=1,
\ldots, m$.
\end{thm}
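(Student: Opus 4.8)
The plan is to prove the following sharper fact, of which the theorem is the bookkeeping consequence: if $f\co M\to\R^{n-k}$ is a fold (resp.\ Morin) map with singular set $\Si$, then every product $\prod_{j}w_{r_j}(TM)$ with all $r_j\ge k+2$ (resp.\ $\ge k+3$) lies in the image of the Gysin homomorphism $j_{!}\co H^{*}(\Si;\Z_2)\to H^{*+k+1}(M;\Z_2)$ of the inclusion $j\co\Si\hookrightarrow M$ and depends on the collection $(r_j)$ only through $m$ and $\sum_j r_j$; the asserted equality is then immediate. The starting point is that over $M\setminus\Si$ the map is a submersion, so there $TM\cong f^{*}T\R^{n-k}\oplus\ker df\cong\ep^{n-k}\oplus K$ with $K$ of rank $k$; hence $w_r(TM)|_{M\setminus\Si}=0$ for $r>k$, and by the exact sequence of $(M,M\setminus\Si)$ together with the Thom isomorphism one may write $w_r(TM)=j_{!}(\alpha_r)$ with $\alpha_r\in H^{r-k-1}(\Si;\Z_2)$. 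To turn this ``support on $\Si$'' into honest identities I would pass, exactly as announced in the introduction, to the blow-up $\widetilde M=\bls{\Si}M$: there the exceptional divisor $E=\mathbb P(\nu)$ (an $\RP^{k}$-bundle over $\Si$, orientable since $k$ is odd) is a genuine smooth hypersurface, and the generic perturbation of the resulting non-generic corank one map makes the comparison of $w(T\widetilde M)$ with $\pi^{*}w(TM)$ and the divisor class $[E]$ completely explicit, so that the Gysin manipulations below become ordinary products and pushforwards of smooth classes.

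The engine of the collapse is the projection formula. Writing $e=e(\nu)=w_{k+1}(\nu)$ for the Euler class of the normal bundle $\nu$ of $\Si$ in $M$, one has $j^{*}j_{!}(a)=a\,e$, hence $\prod_{j=1}^{m}w_{r_j}(TM)=j_{!}\bigl(e^{m-1}\prod_{j=1}^{m}\alpha_{r_j}\bigr)$ and $e\cdot\bigl(e^{m-1}\prod_j\alpha_{r_j}\bigr)=\prod_j(e\,\alpha_{r_j})=\prod_j w_{r_j}(TM|_{\Si})$, so everything is driven by the products of the restricted classes $w_{r_j}(TM|_{\Si})$. These I would compute from the fold geometry. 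The restriction $f|_{\Si}\co\Si\to\R^{n-k}$ is an immersion with rank-one normal bundle $L$, so $T\Si\oplus L$ is trivial and $w(T\Si)=(1+\ga)^{-1}$ with $\ga:=w_1(L)$, while the fold Hessian is a nondegenerate $L$-valued quadratic form on $\nu$, giving $\nu\cong\nu^{*}\otimes L$ and hence, over $\Z_2$, the identity $w(\nu)=w(\nu\otimes L)$. Expanding the latter in Stiefel-Whitney roots yields, in degree $d=k+1$, precisely the relation $\sum_{b=0}^{k}\ga^{\,k+1-b}w_b(\nu)=0$; substituting this into $w_r(TM|_{\Si})=\sum_{b=0}^{k+1}\ga^{\,r-b}w_b(\nu)=\ga^{\,r-k-1}\bigl(e+\sum_{b=0}^{k}\ga^{\,k+1-b}w_b(\nu)\bigr)$ collapses it to $w_r(TM|_{\Si})=\ga^{\,r-k-1}e$ for every $r\ge k+1$. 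Consequently $\prod_j w_{r_j}(TM|_{\Si})=\ga^{\,\sum_j r_j-m(k+1)}e^{m}$ depends only on $m$ and $\sum_j r_j$, and feeding this back through $j_!$ gives the claimed equality of products on $M$.

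The two thresholds and the two hypotheses now enter transparently. For a fold map the density $\alpha_r$ lives in $H^{r-k-1}(\Si)$, and the exceptional value $\alpha_{k+1}\in H^{0}(\Si)$ at $r=k+1$ is a ``fundamental class'' term not governed by the power pattern above; products of classes of degree $\ge k+2$ avoid it, which is exactly the bound $r_j,s_j\ge k+2$. The parity of $k$ enters because the lowest of the above root relations reads $(k+1)\ga=0$, vacuous precisely when $k+1$ is even, i.e.\ $k$ odd, so that $\ga$ survives; orientability supplies the normalisation $w_1(\nu)=\ga$ through $0=w_1(TM|_{\Si})=w_1(T\Si)+w_1(\nu)=\ga+w_1(\nu)$ and guarantees that the oriented blow-up along $\Si$ is available. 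For a genuine Morin map the restriction $f|_{\Si}$ is no longer an immersion but itself Morin, its cusp locus $\Sigma^{k+1,1}$ having codimension $k+2$ in $M$, i.e.\ codimension one in $\Si$; the relation $w(T\Si)=(1+\ga)^{-1}$ and the pure-power description therefore hold only off this codimension-one stratum and acquire a correction supported there, which shifts the clean regime up by one degree and yields the weaker bound $r_j,s_j\ge k+3$.

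I expect the main obstacle to be the transfer of the clean $\Si$-identities into honest equalities of products on $M$: since $j^{*}$ is not injective, knowing $e\cdot\bigl(e^{m-1}\prod_j\alpha_{r_j}\bigr)$ does not by itself pin down $e^{m-1}\prod_j\alpha_{r_j}$ modulo the kernel of multiplication by the Euler class, so one genuinely needs the blow-up model to compute the Gysin contribution honestly rather than reverse-engineer it on $\Si$. The second delicate point, in the Morin case, is to control the deeper Thom-Boardman strata $\Sigma^{k+1,1,\dots,1}$ so that their localized contributions do not disturb the product formula above the threshold; this bookkeeping of the exceptional divisor and of the annihilator of $e$ is exactly where orientability and the parity of $k$ are spent, through the vanishing of the obstructing $w_1$- and Euler-class terms.
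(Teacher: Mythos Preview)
Your computations on $\Sigma$ are correct and elegant: the Hessian at a fold point does give a global isomorphism $\nu\cong\nu^*\otimes L\cong\nu\otimes L$, the degree-$(k+1)$ relation $\sum_{b=0}^{k}\gamma^{k+1-b}w_b(\nu)=0$ follows, and hence $w_r(TM|_\Sigma)=\gamma^{r-k-1}e$ for every $r\geq k+1$. Combined with the projection formula this yields that $j^*\bigl(\prod_j w_{r_j}(TM)\bigr)=e^m\prod_j\alpha_{r_j}=\gamma^{\sum r_j-m(k+1)}e^m$ depends only on $m$ and $\sum_j r_j$. This parallels the paper's Corollary~\ref{thm:tsigma} (stated there for $w(T\Sigma)$) via a slightly different route.

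But the gap you flag in your last paragraph is genuine and you do not close it. You need the class $\prod_j w_{r_j}(TM)=j_!\bigl(e^{m-1}\prod_j\alpha_{r_j}\bigr)$ itself, not merely its restriction to $\Sigma$; and $j^*$ is not injective on $\operatorname{im}j_!$ (equivalently, multiplication by $e$ on $H^*(\Sigma;\Z_2)$ has a kernel which need not lie in $\ker j_!$). Your appeal to the blowup is only a gesture: you never say what input the blowup actually provides beyond making the exceptional divisor codimension one, and that alone does not resolve the ambiguity.

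The paper fills exactly this gap, and the essential input is an \emph{independent} computation of $w(T\bls\csi M)$. Theorems~\ref{perturb}--\ref{perkov} and Remark~\ref{foldpertkov} (whose proofs occupy \S4) show that after blowing up along $\Sigma$ and perturbing $\bls\csi f$ one has $T\bls\csi M\oplus\ep^{1+\delta}\cong\zeta^{k+1+\delta}\oplus\pi^*f^*TQ$, whence $w_r(T\bls\csi M)=0$ for $r>k+1+\delta$. Only with this vanishing in hand does the Geiges--Pasquotto blowup formula express $w_r(\pi^*TM)$, for $r\geq k+2+\delta$, as $\tilde\imath_!\bigl(\alpha\cdot p^*b^{\,r-k-2-\delta}\bigr)$ with $\alpha$ independent of $r$; the product identity then follows from $\tilde\imath_!(u)\tilde\imath_!(v)=\tilde\imath_!(1)\tilde\imath_!(uv)$ and the injectivity of $\pi^*$. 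Several of your explanations of the hypotheses are therefore off: the thresholds $k+2$ and $k+3$ come from the vanishing of $w_r(T\bls\csi M)$, not from any special behaviour of $\alpha_{k+1}$ (your own formula is perfectly valid at $r=k+1$); orientability of $M$ is used only to perturb a general Morin map to a cusp map via \cite{Sad}, not through $w_1(\nu)=\gamma$, and is unnecessary for fold or cusp maps; and the parity of $k$ enters through the perturbation construction of \S4 rather than through the relation $(k+1)\gamma=0$.
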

 
We prove this in a more general form, see Theorem~\ref{charclass}.
 In the proof we proceed by blowing up
 the source manifold of a Morin map $f$ along
the singular set and perturbing $f \circ \pi$, where $\pi$ is the
projection map of the blowup, see Theorems~\ref{perturb} and \ref{perkov}.

As an application of Theorem~\ref{bevclass}
and Proposition~\ref{RPfold} , we obtain
 
\begin{thm}
Let $n=2^d+c$ with $0\leq c < 2^d-2$, $c$ is odd.
\begin{enumerate}[\rm (1)]
\item
There is no fold map of $\RP^n$ into $\R^{n-k}$
for 
$1 \leq k+1 < c$.
\item
There is no Morin map of $\RP^n$ into $\R^{n-k}$
for 
$k+2 < c$ if $k$ is odd.
\end{enumerate}
\end{thm}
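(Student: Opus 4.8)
The plan is to assume a singular map exists, invoke the characteristic-class relations to obtain an identity among the Stiefel--Whitney classes of $\RP^n$, and then exhibit a single instance of that identity which is already false in $H^*(\RP^n;\Z_2)$.

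First I would record the relevant cohomology. Writing $a$ for the generator of $H^1(\RP^n;\Z_2)$, we have $H^*(\RP^n;\Z_2)=\Z_2[a]/(a^{n+1})$ and $w(T\RP^n)=(1+a)^{n+1}$. Since $n=2^d+c$ with $c$ odd and (because the hypothesis $c<2^d-2$ forces $d\geq 2$) $2^d$ even, $n$ is odd, so $\RP^n$ is orientable and the standing orientability hypothesis of Theorem~\ref{bevclass} is met. Factoring $n+1=2^d+(c+1)$ and using the Frobenius identity gives
$$w(T\RP^n)=(1+a)^{2^d}(1+a)^{c+1}=(1+a^{2^d})(1+a)^{c+1},$$
so for $0\le j\le c+1<2^d$ one reads off $w_j(T\RP^n)=\binom{c+1}{j}a^j$. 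In particular $w_{c+1}=a^{c+1}$ (as $\binom{c+1}{c+1}=1$), while $w_c=\binom{c+1}{c}a^c=(c+1)a^c=0$ because $c+1$ is even.

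Next I would produce the contradiction. Suppose there is a fold map $\RP^n\to\R^{n-k}$ with $k+1<c$; then Proposition~\ref{RPfold} yields $\prod_j w_{r_j}=\prod_j w_{s_j}$ whenever all indices are $\ge k+2$ and $\sum_j r_j=\sum_j s_j$. I apply this with the two length-two collections $(c+1,c+1)$ and $(c,c+2)$: all four indices are $\ge k+2$ precisely because $k+1<c$ gives $c\ge k+2$, and both collections sum to $2c+2$. The relation then forces
$$w_{c+1}^2=w_c\,w_{c+2}.$$
But the right-hand side is $0$ (as $w_c=0$), whereas $w_{c+1}^2=a^{2c+2}\neq 0$ since $2c+2\le n$ (equivalently $c\le 2^d-2$, which holds by hypothesis). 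This contradiction proves (1). For (2) I use the same two collections: now $k$ is odd and Theorem~\ref{bevclass} supplies the relation for indices $\ge k+3$, which is guaranteed by $k+2<c$, i.e.\ $c\ge k+3$; the identical contradiction $a^{2c+2}=w_{c+1}^2=w_c\,w_{c+2}=0$ proves (2).

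The genuinely delicate points here are bookkeeping: verifying that the chosen indices lie in the admissible range, so that $c\ge k+2$ (resp.\ $c\ge k+3$) is exactly the stated numerical hypothesis, and that $2c+2$ does not exceed $n$ so that $a^{2c+2}$ survives in the truncated polynomial ring. The conceptual step — noticing that $w_{c+1}^2$ is nonzero while the equal-degree product $w_c\,w_{c+2}$ vanishes — is what makes the relations bite. I expect the main subtlety to be invoking the correct refinement in the fold statement: Theorem~\ref{bevclass} is phrased for odd $k$, whereas part (1) allows every $k$, so one must appeal to Proposition~\ref{RPfold} (equivalently the more general Theorem~\ref{charclass}) to obtain the $\ge k+2$ relations without a parity restriction.
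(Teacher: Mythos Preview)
Your computation is correct and is essentially the paper's argument: the paper packages the contradiction as Proposition~\ref{RPfold}, whose proof compares the very same pair $w_{m+1}^2$ versus $w_m w_{m+2}$ (with $m=c$). One cosmetic difference: the paper kills the product by showing $w_{c+2}=0$ via Lemma~\ref{thm:middleNull} (no parity of $c$ needed), whereas you kill it via $w_c=0$ using that $c$ is odd; both are valid here.

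There is, however, a citation mix-up you should fix. Proposition~\ref{RPfold} does \emph{not} yield the relations $\prod w_{r_j}=\prod w_{s_j}$; it is the converse statement (``if such relations hold for indices $\ge l$ on $\RP^n$, then $l\ge m+1$''), i.e.\ it supplies the contradiction, not the hypothesis. The relations themselves come from Theorem~\ref{charclass} (equivalently Theorem~\ref{bevclass}). Your closing worry about even $k$ in part~(1) is real but is not resolved by either Proposition~\ref{RPfold} or Theorem~\ref{charclass}, both of which still assume $k$ odd; the correct reference is the Remark following Theorem~\ref{charclass}, which notes that for even $k$ a fold map into $\R^{n-k}$ forces $w_j(TM)=0$ for all $j\ge k+2$ (by \cite{An}), so the relations hold trivially and your contradiction $w_{c+1}=a^{c+1}\neq 0$ with $c+1\ge k+2$ already finishes that case.
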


For example there is no Morin map of $\RP^{13}$ into  $\R^{12}$,
and there is no fold map of $\RP^{11}$ into $\R^{10}$.
 
We call  a smooth map  from $M^n$ to $Q^{n-k}$
 a {\it corank $1$  map} if
the rank of its differential
is not less than $n-k-1$ at any point of $M$.
 About the vanishing of rational Pontryagin classes of $TM$, we have the analogous result to \cite{Bott}:

 \begin{thm}
 Suppose $M^n$  admits a corank $1$ map into $\R^{n-k}$.
Then
  the rational Pontryagin class $p_i^{\Q}(TM) \in H^{4i}(M;\Q)$ vanishes for $2i >
k+1$.
 \end{thm}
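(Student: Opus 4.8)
The plan is to split $M$ into the regular part of $f$ and its singular locus $\Sigma$, to read off the desired vanishing on the regular part from the low rank of the vertical bundle, and to absorb the remaining, $\Sigma$-supported contribution by a blow-up. First I would make $f$ generic: since $\rank df \ge n-k-1$ is an open condition and $M$ is compact, a small perturbation keeps $f$ of corank $1$ while making it generic, so that its singular set $\Sigma = \Sigma^{k+1}$ becomes a closed submanifold of codimension $k+1$. Over $U = M \setminus \Sigma$ the map $f$ is a submersion into $\R^{n-k}$, so $TM|_U \cong \ker df|_U \oplus f^*T\R^{n-k}|_U$, where the second summand is trivial and the vertical bundle $\ker df|_U$ has rank $k$. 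Hence $p_i^{\Q}(TM)|_U = p_i^{\Q}(\ker df|_U) = 0$ whenever $2i > k$, which is already stronger than the asserted range. The entire difficulty is therefore to show that $p_i^{\Q}(TM)$, which a priori may be supported on $\Sigma$, in fact vanishes globally once $2i > k+1$.

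Next I would analyze $f$ along $\Sigma$. Over $\Sigma$ one has $TM|_\Sigma \cong K \oplus C$, where $K = \ker df|_\Sigma$ has rank $k+1$ and $df$ restricts to an isomorphism $C \xrightarrow{\sim} \im df|_\Sigma$; since $f^*T\R^{n-k}|_\Sigma = \im df|_\Sigma \oplus \ell$ with $\ell = \coker df$ a line bundle, we get $p^{\Q}(C)=1$ rationally and thus $p_i^{\Q}(TM)|_\Sigma = p_i(K) = 0$ for $2i > k+1$. So both $U$ and $\Sigma$ exhibit the vanishing in the claimed range; what is missing is a way to combine these two pieces of information. Conceptually this is a geometric-dimension question: if $\ker df$ extended to a genuine rank-$(k+1)$ subbundle $K \subseteq TM$ over all of $M$ with $TM/K$ rationally stably trivial, then $p^{\Q}(TM)=p^{\Q}(K)$ would finish the proof at once. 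The only obstruction to such an extension is the cokernel line bundle $\ell$, encoded in the normal bundle $N_\Sigma \cong \Hom(K,\ell) = K^*\otimes\ell$.

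Finally I would resolve this obstruction by blowing up. Let $\pi \co \widetilde{M} = \bls{\Sigma}M \to M$ be the blow-up along $\Sigma$, with exceptional divisor $E = \RP(N_\Sigma)$; since $\pi$ has degree one, $\pi_*\pi^* = \mathrm{id}$ on rational cohomology, so it suffices to control $\pi^* p_i^{\Q}(TM) = p_i^{\Q}(\pi^*TM)$ on $\widetilde{M}$. Applying Theorems~\ref{perturb} and~\ref{perkov} to $f\circ\pi$, I would perturb it to a map whose singularities along $E$ are strictly simpler, compare $T\widetilde{M}$ with $\pi^*TM$ through the tautological line bundle of $E$, and push the resulting correction back down by $\pi_*$. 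Because $E$ and $N_\Sigma$ carry the codimension $k+1$ of $\Sigma$, the pushforward raises degree by $k+1$, which is exactly what converts the regular-part bound $2i>k$ into the global bound $2i>k+1$. The main obstacle is precisely this last step: carrying out the tangent-bundle comparison across the (real) exceptional divisor and verifying that the $E$-supported correction pushes forward to zero for $2i>k+1$ — equivalently, checking that the blow-up genuinely trivializes the $\ell$-obstruction of the previous paragraph. Everything else is bookkeeping with the splitting principle and the projection formula.
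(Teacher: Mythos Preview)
Your proposal has a genuine gap and, more importantly, misjudges where the difficulty lies. The blow-up machinery of Theorems~\ref{perturb} and~\ref{perkov} applies only to \emph{Morin} (indeed fold or cusp) maps with $k$ odd; as the paper itself remarks just before Definition~\ref{blowup}, a general corank~$1$ map cannot be perturbed to a Morin map, so you are not entitled to invoke those results here. You also acknowledge that the final step---comparing $T\widetilde M$ with $\pi^*TM$ and pushing the $E$-supported correction to zero---is the ``main obstacle'' and do not carry it out, so the argument is incomplete even on its own terms. Finally, knowing that $p_i^{\Q}(TM)$ restricts to zero on both $U$ and $\Sigma$ does not by itself imply global vanishing; this is exactly the gluing problem you set out to solve with the blow-up, but never finish.

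The paper's proof (Theorem~\ref{szingtusk}) bypasses all of this with a direct and much shorter argument. Since $Q=\R^{n-k}$ is stably parallelizable, trivialize $TQ\oplus\ep^l$ by $n-k+l$ independent $1$-forms and pull them back via $df$ to sections of $TM\oplus\ep^l$. Because $\rank df\ge n-k-1$ everywhere, these $n-k+l$ sections span a subspace of dimension at least $n-k+l-1$ at every point of $M$. By the classical representation of rational Pontryagin classes as degeneracy loci \cite{Gab,Po,Ro}, $p_i^{\Q}(TM\oplus\ep^l)$ is carried by the locus where $n+l-2i+2$ generic sections span at most $n+l-2i$; the existence of $n-k+l$ sections that everywhere span at least $n-k+l-1$ forces this locus to be empty once $n-k+l\ge n+l-2i+2$, i.e.\ once $2i\ge k+2$. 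No genericity of $f$, no blow-up, and no parity assumption on $k$ are needed.
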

 
 For example, there is no corank $1$ map of $\CP^n$ into $\R^{2n-k}$
if $\lfloor n/2 \rfloor \geq (k+2)/2$.
By Thom transversality and computing the codimension of the Boardman manifolds \cite{Bo},
we have that if $n < 2(k+2)$, then $M^n$ admits 
 corank $1$ maps  into $Q^{n-k}$. 
 
 Hence for even $n$, we obtain that 
$\CP^n$ has a corank $1$ map into $\R^{2n-k}$
if and only if $n < k+2$.
 For odd $n \geq 3$, we do not know whether  corank $1$ maps
 exist
 from $\CP^n$ to $\R^{n+2}$.

 We also obtain results about the cobordism class of the source manifold of a Morin map by
 combining our relations among characteristic numbers of
 the source manifold (see Proposition~\ref{karosztalyok})
 with Dold relations.

\begin{thm}
Suppose $M^n$ is orientable and admits a fold map into $\R^{n-k}$. Then
\begin{enumerate}[\rm (1)]
 \item
if $k=1$ and $0< n  \neq 2^a + 2^b -1$, $a > b \geq 0$, then
 ${M}$ is null-cobordant,
 \item
if $n>k \geq 5$, $k$ is odd, $k \neq 2^a - 1$, $a \geq 3$, $n-k \neq 1,3,7$ and
 $w_i(TM)=0$ for $i=2, \ldots,  k$, then
${M}$ is null-cobordant.
\end{enumerate}
\end{thm}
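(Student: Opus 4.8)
The plan is to reduce everything to Thom's theorem, which identifies the unoriented cobordism class of a closed manifold with its collection of Stiefel-Whitney numbers: $M^n$ is null-cobordant if and only if $\langle w_{i_1}(TM)\cdots w_{i_r}(TM),[M]\rangle = 0$ for every partition $i_1+\cdots+i_r=n$. So it suffices to prove that all Stiefel-Whitney numbers of $M$ vanish, and I would kill them in two stages, using the fold-map hypothesis (through Proposition~\ref{karosztalyok}) for the first stage and the Dold relations for the second.

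First I would cut down the partitions that can contribute. Orientability gives $w_1(TM)=0$, and in case (2) the hypothesis adds $w_i(TM)=0$ for $2\le i\le k$, so that every partition with a possibly nonzero Stiefel-Whitney number has all parts $\ge k+1$. I would then feed this into Proposition~\ref{karosztalyok}: the characteristic-number form of Theorem~\ref{bevclass} says that a product of classes $w_{r_j}(TM)$ with all $r_j\ge k+2$ depends, as a Stiefel-Whitney number, only on the number of factors and the total degree. Together with the bound $r(k+1)\le n$ on the number $r$ of parts, this collapses the a priori nonzero Stiefel-Whitney numbers to a short list of canonical numbers, parametrized essentially by how many parts equal the minimal value $k+1$ (the one part the exchange relation cannot move) and by the residual degree.

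In the second stage I would invoke the Dold relations, i.e.\ the universal relations among the Stiefel-Whitney numbers of any closed $n$-manifold arising from the vanishing of the Wu classes $v_j=0$ for $j>n/2$, expanded through Wu's formula $\operatorname{Sq}^a(w_b)=\sum_t\binom{b-a+t-1}{t}w_{a-t}w_{b+t}$ and the Cartan formula. Applied to the canonical numbers from the first stage, these produce a system of $\Z_2$-linear equations whose coefficients are binomial coefficients mod $2$. I would then show that under the stated arithmetic hypotheses these coefficients are units, so the system has only the trivial solution and every remaining Stiefel-Whitney number vanishes. This is exactly where the excluded dimensions enter: the mod-$2$ binomial congruence that makes the relevant Dold relation nondegenerate fails precisely when $n=2^a+2^b-1$ (case (1), by Lucas' theorem applied to the top relation), when $k=2^a-1$ so that $k+1$ is a power of two, or when the target dimension $n-k$ is one of the Hopf-invariant-one values $1,3,7$.

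The main obstacle I expect is the bookkeeping of the second stage. Translating $v_j=0$ into explicit relations among the canonical Stiefel-Whitney numbers requires iterating Wu's formula on products of high-degree classes and then carrying out the mod-$2$ binomial analysis (Lucas' and Kummer's theorems) to locate exactly the dimensions where the system degenerates. What must be controlled is the interplay between the fold-map exchange relations, which act horizontally by redistributing degree among factors, and the Dold relations, which act vertically by trading degree against the number of factors through the Steenrod squares; verifying that their combination admits only the trivial common solution away from the exceptional dimensions is the crux of the argument.
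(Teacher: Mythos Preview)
Your plan for part~(1) is essentially the paper's own argument: encode the exchange relations of Proposition~\ref{karosztalyok} (for a fold map these operate on all factors $\ge k+1$, not $\ge k+2$) by the homomorphism $\varrho\co\Z_2[w_1,\dots]\to\Z_2[x,t]$, $w_1\mapsto 0$, $w_s\mapsto xt^{s-2}$, and then show that the image $\varrho(\mathcal D)$ of the Dold relations exhausts $\im\varrho_n$ precisely when $n\neq 2^a+2^b-1$. The binomial bookkeeping you anticipate is real; the paper spends several pages on it.

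For part~(2), however, the paper does \emph{not} run a Dold-relation analysis, and your plan has a gap. The paper's argument is much shorter: Corollary~\ref{spin} reduces every multi-factor Stiefel--Whitney number to one of the form $w_{k+1}^{\,l}w_{n-(k+1)l}[M]$, and then the standard Wu-formula fact (\cite[Problem~8-B]{MiSta}) that the first nonvanishing Stiefel--Whitney class of any space occurs in a degree that is a power of~$2$ gives $w_{k+1}(TM)=0$ as a \emph{cohomology class}, since $w_1=\cdots=w_k=0$ and $k+1$ is not a power of~$2$. This kills all multi-factor numbers at once; it is where the hypothesis $k\neq 2^a-1$ enters, not a degeneracy of a Dold coefficient.

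The genuine gap concerns the single-factor number $w_n[M]=\chi(M)\bmod 2$. The exchange relations say nothing about length-one monomials, and Dold relations alone certainly cannot force $w_n[M]=0$ (witness $\RP^{2m}$). The paper handles this case by a separate geometric input, the Saeki--Sakuma theorem \cite{SaeSak}: a fold map into an almost parallelizable $(n-k)$-manifold forces $\chi(M)$ even provided $n-k\neq 1,3,7$. So the exclusion $n-k\neq 1,3,7$ is not a place where a Dold binomial degenerates, as you suggest, but the exact range in which \cite{SaeSak} applies (ultimately the Hopf-invariant-one values, as you sensed). Without invoking \cite{SaeSak}, your second stage cannot dispose of $w_n[M]$, and the proof of part~(2) does not close.
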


For fold maps into $(n-k)$-dimensional manifolds with $k=2^a-1$, $a >1$, we have 
Conjecture~\ref{nullcobnagykodim}, which we 
verified for $n \leq 1200$ and $3 \leq k \leq 1023$ by using a computer.

\begin{thm}
Suppose $M^n$ is orientable and admits a Morin map into $\R^{n-k}$.
 If $n-k = 5,6$ or $n-k \geq 9$, $k$ is odd and 
 $w_i(TM)=0$ for $i=2, \ldots,  k+1$, then $M$ is null-cobordant.
\end{thm}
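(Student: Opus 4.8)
The plan is to show that every Stiefel-Whitney number of $M$ vanishes; by Thom's theorem this is equivalent to $M$ being (unoriented) null-cobordant. Since $M$ is orientable we have $w_1(TM)=0$, and together with the hypothesis $w_i(TM)=0$ for $2\le i\le k+1$ this annihilates every Stiefel-Whitney number containing a factor $w_i(TM)$ with $1\le i\le k+1$. Hence the only monomials $w_{i_1}(TM)\cdots w_{i_r}(TM)$ of top degree $\sum_j i_j=n$ that can evaluate nontrivially on the fundamental class $[M]$ are those in which every index satisfies $i_j\ge k+2$.

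The next step is to exploit the relations of Theorem~\ref{bevclass}: because $k$ is odd, $M$ is orientable, and $f$ is a Morin map, any two monomials in the classes $w_{\ge k+3}(TM)$ of the same total degree coincide in $H^*(M;\Z_2)$. Applying this to the ``tail'' of a surviving monomial, I can replace any block of factors of degree $\ge k+3$ by a single class of the same degree. After separating out the factors equal to $w_{k+2}(TM)$, this reduces every potentially nonzero Stiefel-Whitney number to one of the standard form
$$N_a:=w_{k+2}(TM)^{\,a}\cdot w_{\,n-a(k+2)}(TM)\,[M],\qquad a\ge 0,$$
with the convention $w_0=1$ when $n=a(k+2)$; these are exactly the reduced characteristic numbers packaged in Proposition~\ref{karosztalyok}. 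Thus it suffices to prove $N_a=0$ for every $a$, and in particular $N_0=w_n(TM)[M]=0$.

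The final and decisive step is to feed these reduced numbers into the Dold relations, the universal linear relations among the Stiefel-Whitney numbers of any closed manifold, and to show that the resulting congruences force each $N_a$ to vanish. I expect this to be the main obstacle: one must verify that the system produced by the Dold relations is nondegenerate precisely in the range $n-k=5,6$ or $n-k\ge 9$, leaving no room for a nonzero $N_a$. The excluded values $n-k\le 4$ and $n-k=7,8$ are exactly the dimensions where the Dold relations degenerate and fail to pin down the $N_a$ --- mirroring the parallelizable-sphere dimensions $1,3,7$ that obstruct the analogous fold statement --- so the heart of the argument is a careful dimension-by-dimension analysis confirming that outside these exceptional values the combined relations admit only the trivial solution.
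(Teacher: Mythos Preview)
Your reduction step has a small slip and your final step has a genuine gap.

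The slip: Theorem~\ref{bevclass} equates two monomials only when they have the \emph{same} number of factors; it does not let you collapse a block of factors of degree $\ge k+3$ into a single class. The correct tool is Proposition~\ref{karosztalyok}(3), which says (for characteristic \emph{numbers}, not classes) that a monomial with all indices $\ge k+2$ depends only on its length $r$. Hence any such monomial of length $r\ge 2$ equals $w_{k+2}^{\,r-1}w_{n-(r-1)(k+2)}[M]$. At this point you are missing an easy observation the paper uses: since $w_1=\cdots=w_{k+1}=0$ and $k+2$ is odd (hence not a power of $2$), \cite[Problem~8-B]{MiSta} forces $w_{k+2}(TM)=0$. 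So every characteristic number of length $\ge 2$ vanishes outright, and the \emph{only} possibly nonzero number is $w_n[M]$.

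The gap: your proposal to kill $w_n[M]$ via Dold relations cannot succeed. Dold relations are universal identities among Stiefel-Whitney numbers of closed $n$-manifolds; they depend only on $n$, never on $k$, and $w_n[M]=0$ is not among them (closed $n$-manifolds with odd Euler characteristic exist for all $n$). So there is no hope that ``the system becomes nondegenerate precisely when $n-k\in\{5,6\}\cup[9,\infty)$''. In the paper the dimensional restriction enters through an entirely different mechanism: the results of \cite{SaeSak} and \cite{Sad} show that a Morin map of $M$ into an almost parallelizable $(n-k)$-manifold with $n-k$ in this range forces $\chi(M)$ to be even, whence $w_n[M]=\chi(M)\bmod 2=0$. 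That external input from singularity theory, not the Dold relations, is what eliminates the last characteristic number.
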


Note that $w_j(TM)=0$ holds for all $j=1, \dots, k$ if for example $M$ is
$k$-connected, i.e.\ all the homotopy groups $\pi_j(M)$ vanish for $1 \leq j
\leq k$.

Our results give  
easy to use 
 criteria
for the existence of fold maps, Morin maps and corank $1$ maps in general.
Up to the present, some  practical  
methods to check the existence of some singular map in general
have already been obtained:
\begin{itemize}
\item
There exists a fold map $f \co M \to Q$ with
  cokernel $f^*TQ / f^*df(TM)$   being trivial  on the singular set
 if and only if there is a bundle epimorphism $TM \+\ep^1
\to TQ$ \cite{An, Sa1}. This gives a complete answer to
the problem of existence of fold maps with $k \equiv 0 \mod 2$ 
\cite{An}, which can be easily used for further computations when $k$ is even, see
for example \cite{SadSaeSak}.
\item
More general versions of this result are deep theorems stating h-principles,
which are hard to apply directly
and led to criteria using Thom polynomials,
 see for example \cite{An01, An02, An0}.
 \item
There exist  fold maps and  cusp maps of $M$ into an almost parallelizable
manifold only if the Euler characteristic $\chi(M)$ is even, under the assumption
that $n-k$
is big enough \cite{SaeSak}. Refinements of \cite{SaeSak} 
include results for Morin maps as well when $k$
is odd \cite{An2, Sad} but nothing is known when $\chi(M)$ is even. 
\item
For odd $k$,
 the self-intersection class of the singular set of a generic
corank $1$ map $f$ of $M$ into $Q$
is equal  to the $(k+1)/{2}$-th Pontryagin class of 
$TM - f^*TQ$ modulo $2$-torsion \cite{OhmSaeSak}.
\end{itemize}

The paper is organized as follows. In \S2 we  present the main results about
blowing up the source manifold of a singular map. In \S3 we present the
main results about the characteristic classes of the source manifold of a
singular map. In \S4 we prove the statements of \S2, and in \S5 we prove the
statements of \S3.

\subsection*{Acknowledgements} 
The authors would like to thank Andr\'as Sz\H{u}cs for his advices, which improved
 the paper.

\subsection*{Conventions}

All manifolds henceforth are assumed to be smooth of class $C^{\infty}$.
Let $\mathfrak N_n$ denote the unoriented cobordism group of $n$-dimensional
manifolds. The term ``cobordant'' refers to unoriented cobordism unless 
oriented cobordism is specified  explicitly.
For a finite CW-complex $X$,
$\rKO(X)$ and ${\KO}(X)$  denote the reduced and unreduced  real K-rings of $X$, 
respectively, with $\rKO(X) \subseteq {\KO}(X)$. 
The symbol $\ep^n_X$
denotes the trivial $n$-dimensional bundle over the space $X$, the lower
index $``X"$ will be omitted when it is clear from the context. 
Wherever not stated
otherwise, we use the convention that if $\be < 0$ or $\al < \be$, then
the binomial coefficient $\binom{\al}{\be} = 0$.

\section{Blowing up the source manifold along the singular set}

For $n > k \geq 0$, let $M$ be a closed $n$-manifold and $Q$ be an $(n-k)$-manifold.
It is known that Morin maps are generic corank $1$ maps\footnote{In \cite[Chapter VI \S1]{GoGu}
it is called $1$-generic and corank at most $1$ everywhere.},
the singular set  of a Morin map of $M$ into $Q$
 is an embedded $(n-k-1)$-dimensional manifold,
the closure of $\Sigma^{k+1,1,\ldots,1,0}$ is $\Sigma^{k+1,1,\ldots,1}$ with the same number
of copies of ``$1$", and we have  $1$-codimensional embeddings
$\Sigma^{k+1} \supset \Sigma^{k+1, 1} \supset \cdots$ of
closed manifolds.
 Morin maps with only $\Sigma^{k+1, 0}$ 
and $\Sigma^{k+1,1, 0}$ singularities
are called cusp maps, while cusp maps 
with only $\Sigma^{k+1, 0}$
singularities are called fold maps.
Furthermore, the points of $\Sigma^{k+1, 0}$ and $\Sigma^{k+1, 1, 0}$ are called fold singular points and
cusp singular points, respectively.
We note that in general a corank $1$ map cannot be perturbed to obtain a  Morin map.

For an odd $k \geq 1$, let $f \co M^{n} \to Q^{n-k}$ be a Morin map. 
We denote the $(k+1)$-dimensional normal bundle of
its singular set $\Sigma = \Sigma^{k+1}$
by $\csi$. For $0 \leq \la \leq (k+1)/2$ let $\Sigma^{k+1,0}_\la $ be the set of
index $\la$ fold singular points\footnote{The index is well-defined
if we consider that $\la$ and $k+1-\la$ represent the same index.}
 of $f$. 
 Denote by $\eta$ the restriction of $\csi$ to $\Sigma^{k+1,0}_{(k+1)/2}$.
 Then, the normal bundle $\eta$ has structure group $G(\eta)$ generated by
transformations of the form
$$(x_1,\ldots, x_{k+1}) \mapsto A(x_1,\ldots, x_{k+1}) \text{ with } A\in O\left ( \frac{k+1}{2} \right )
\times O \left ( \frac{k+1}{2} \right )$$
and 
$$(x_1,\ldots, x_{k+1}) \mapsto (x_{(k+1)/2+1}, \ldots, x_{k+1}, x_1, \ldots,
x_{(k+1)/2}).$$
The restriction of $f$ to any fiber of $\eta$ is left-right equivalent to
the saddle singularity $$(x_1,\ldots, x_{k+1}) \mapsto \sum_{i=1}^{(k+1)/2} x_i^2-
\sum_{i=(k+1)/2+1}^{k+1} x_i^2,$$
i.e., to the fold singularity of index $(k+1)/2$.

Note that even if $M$ and $Q$ 
are
oriented, the index $(k+1)/2$ indefinite fold singular set of $f$ can be
non-orientable.

\begin{defn}[Blowup]\label{blowup}
Let $V$ be an $l$-dimensional closed submanifold of ${M^n}$,
and denote the $(n-l)$-dimensional normal bundle of $V$ by $\zeta$.
Let $\bls \zeta {M}$ denote the manifold obtained by blowing up $M$ along
$V$. Let $\bls \zeta f$ denote the composition $f \circ \pi$ where $\pi \co
\bls \zeta {M} \to M$ is the natural projection.
\end{defn}


\begin{rem}
Let $f \co M^n \to Q^{n-k}$ be a generic corank $1$ 
map. 
Then the singular set $\Si$ is an embedded $(n-k-1)$-dimensional submanifold of $M$ \cite{Bo}, 
denote its normal bundle  by $\csi$.
We have that the map
$\bls \csi f$ is a non-generic corank $1$ map and 
its singular set is $\pi^{-1}(\Si)$.
\end{rem}

We will use the notations of the following blowup diagram.

\begin{equation*}\label{invaridiag}
\begin{CD}
\pi^{-1}(\Si) @> \tilde  \imath >> \bls \zeta {M} \\
@VV p V @V \pi VV \\
\Si @> i >> {M}
\end{CD}
\end{equation*}

Note that $\pi^* \co H^{m}(M; \Z_2) \to H^{m}(\bls \zeta {M};\Z_2)$ is
injective for all $m$. Indeed, consider the Gysin map $\pi_! \co
H^{m}(\bls \zeta {M};\Z_2) \to H^{m}( M; \Z_2)$. Denote by $\PD$ the
Poincar\'e duality map $\PD \co H_n({M^n}; \Z_2) \to H^0({M^n}; \Z_2)$, then
for any $x \in H^m(M;\Z_2)$ we have
\begin{align*}
\pi_! (\pi^*(x)) &= \pi_!(\pi^*(x) \cup 1) = x \cup \pi_!(1) = x \cup \PD
(\pi_*([\bls \zeta {M}] \cap 1)) =\\
&=x \cup \PD (\pi_*([\bls \zeta {M}])) = x \cup \PD ([M]) = x \cup 1 =x.
\end{align*}

\begin{defn}[Morse-Bott map]\label{BottMorsedef}
For $n > k \geq 0$, we call a smooth map $f \co P^{n} \to Q^{n-k}$ a
{\it Morse-Bott map} if
\begin{enumerate} 
\item
the set $S_f$ of singular points of $f$ is the disjoint union $\sqcup_i S_i$
of smooth closed connected submanifolds of $P$,
\item
each component $S_i$ is the total space of a smooth bundle with a connected
manifold $C_i$ as fiber,
\item
for each component $S_i$ there exist $\la$ and $l$ such that $0 \leq \la
\leq l \leq k+1$ and for each singular point $p \in S_i$ there exist
neighborhoods $U_1$ of $p$, $U_2$ of $f(p)$ and diffeomorphisms $u_1 \co U_1
\to \R^l \x \R^{k+1-l} \x \R^{n-k-1}$ and $u_2 \co U_2 \to \R \x \R^{n-k-1}$
with the following properties:
\begin{enumerate}
\item
$u_1(p) = 0$, $u_2(f(p)) = 0$,
\item
$u_1(U_1 \cap S_f) = \{ (x,y,z) \in  \R^l \x \R^{k+1-l} \x \R^{n-k-1} : x = 0 \}$,
\item
for the fiber $C_i$ containing $p$,
$u_1(U_1 \cap C_i) = \{ (x,y,z) \in  \R^l \x \R^{k+1-l} \x \R^{n-k-1} : x = 0, z=0
\}$, and
\item
$u_2 \circ f \circ u_1^{-1}(x,y,z) =
(\sum_{i=1}^{\la}- x_i^2 + \sum_{i=\la+1}^{l} x_i^2, z)$.
\end{enumerate}
\end{enumerate}
The index of $f$ at a singular point $p$ is the pair $( \la, k+1 -l)$ if
$\la \leq l-\la$.
\end{defn}

\begin{rem}
Compare Definition~\ref{BottMorsedef} with \cite[Morse-Bott Lemma]{BaHu}.

\par 

Note that for a Morse-Bott map $f \co P \to Q$ the index is well-defined.
Let $\Si_{(\la, k+1-l)}$ denote the set of singular points of $f$ which
have index $(\la, k+1-l)$, then $\Si_{(\la, k+1-l)}$  is an $(n-l)$-dimensional closed submanifold
of $P$. Also note that a Morse-Bott map is a corank $1$ map, although it
is not necessarily  generic or Morin.

\par

For each index $(\la, k+1-l)$, let $\tilde \Si_{(\la, k+1-l)}$ denote the
set $\Si_{(\la, k+1-l)} /\sim$ where $p \sim q$ if and only if $p$ and
$q$ lie in the same connected fiber $C_i$ for some $i$.
Clearly $\tilde \Si_{(\la, k+1-l)}$ is an $(n-k-1)$-dimensional
manifold and the continuous map $\tilde f_{(\la, k+1-l)} \co \tilde
\Si_{(\la, k+1-l)} \to Q$ determined by the property $f = \tilde f_{(\la,
k+1-l)} \circ q_{\sim}$, where $q_{\sim} \co  \Si_{(\la, k+1-l)} \to
\tilde \Si_{(\la, k+1-l)}$ is the quotient map, is an immersion.

\par

The cokernel bundle $(f^*TQ / f^* df( TP))|_{S_f}$
can be identified with  the pull-back $$q_{\sim}^* (\cup_{(\la, k+1-l)} \tilde f_{(\la, k+1-l)}) ^* \nu,$$
where $\nu$ is the normal bundle of the immersion
$\cup_{(\la, k+1-l)} \tilde f_{(\la, k+1-l)} \co \cup_{(\la, k+1-l)} \tilde
\Si_{(\la, k+1-l)} \to Q$, where
$(\la, k+1-l)$ runs over all the indices of $f$.
\par

If $\la \neq l-\la$, then the normal bundle of the immersion $\tilde
f_{(\la, k+1-l)}$ is trivial.
\end{rem}

\begin{thm}\label{perturb}
Let $k \geq 1$ be odd.
For a fold map $f \co {M^n} \to Q^{n-k}$, we can perturb the map
$$\bls \eta f \co \bls \eta {M} \to Q$$
in a neighborhood of $\pi^{-1}(\Sigma^{k+1,0}_{(k+1)/2})$ so that the perturbed
map $\Th \co \bls \eta {M} \to Q$ is Morse-Bott 
and the normal bundle of the immersion
$\tilde
\Th_{(\la, k+1-l)}$ is trivial for each index $(\la, k+1-l)$.
The stable tangent bundle of $\bls \eta {M}$ splits as
$T{\bls \eta {M}} \oplus \ep^{1} \cong \zeta^{k+1} \oplus \Th^*TQ$
for some $(k+1)$-dimensional vector bundle $\zeta^{k+1}$ over $\bls \eta M$.
\end{thm}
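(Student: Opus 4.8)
The plan is to put $\bls\eta f$ into a manageable local normal form near the exceptional divisor, perturb it only where that form is degenerate, and then extract the stable tangent bundle splitting formally from the cokernel of the perturbed differential. First I would compute $\bls\eta f$ near $E := \pi^{-1}(\Sigma^{k+1,0}_{(k+1)/2})$. Since the middle-index fold contributes the first target coordinate $q(x)=\sum_{i=1}^{(k+1)/2}x_i^2-\sum_{i=(k+1)/2+1}^{k+1}x_i^2$, and blowing up replaces the fibre $\R^{k+1}$ of $\eta$ by the tautological line over $\mathbb P(\eta)$, in the affine chart $x_1=t$, $x_j=tu_j$ the map reads $(t,u,z)\mapsto(t^2g(u),z)$ with $g(u)=1+\sum_{j=2}^{(k+1)/2}u_j^2-\sum_{j>(k+1)/2}u_j^2$; invariantly, the first coordinate is $q\circ\pi$, which vanishes to second order along $E$, so $\bls\eta f$ is non-generic exactly there.

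Next I would observe that away from the zero locus $\{g=0\}\subset E$ (equivalently $\{\hat q=0\}$ in the unit sphere bundle $S(\eta)$) the model $t^2g$ is already Morse-Bott in the sense of Definition~\ref{BottMorsedef}: its singular set is $E$, the transverse Hessian in the $t$-direction is $2g\neq0$, and this is an index $(0,k)$ stratum ($l=1$), whose normal bundle is automatically trivial by the remark following Definition~\ref{BottMorsedef} since $0\neq l-\la=1$. Hence only a neighbourhood of $\{q=0\}\cap E$ needs modification. There $dg\neq0$ (it vanishes only at $u=0$, where $g=1$), so $s:=g$ is a coordinate and the germ becomes the non-isolated $x^2y$-type singularity $(t,s,w,z)\mapsto(t^2s,z)$. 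I would replace $t^2s$ by $s(t^2-\ep^2)$, cut off to agree with $t^2g$ outside a neighbourhood of $\{q=0\}$; globally this amounts to subtracting $\ep^2\rho\,g$ from the first coordinate, where $\rho$ is supported near $\{\hat q=0\}$ and chosen, by controlling the transition region, so that no new degenerate critical points appear. The resulting map $\Th$ has singular set $\{s=0,\ t=\pm\ep\}$ near $\{q=0\}$, along which the transverse quadratic is $2\ep s\tau$ (with $\tau=t-\ep$ on the sheet $t=\ep$): a balanced saddle, i.e.\ an index $(1,k-1)$ Morse-Bott stratum ($l=2$, $\la=1$). Thus $\Th$ is Morse-Bott with strata of indices $(0,k)$ and $(1,k-1)$.

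The main obstacle is the triviality of the normal bundles. For the index $(0,k)$ strata it is free as above, but the newly created index $(1,k-1)$ stratum is precisely the case $\la=l-\la=1$, not covered by the remark, so I must show directly that the normal bundle $\nu$ of the immersion $\tilde\Th_{(1,k-1)}$ is trivial. Here $\nu$ is the cokernel line bundle, essentially the pullback of the (possibly non-orientable) cokernel bundle $L$ of the original fold $f$ along $\Sigma^{k+1,0}_{(k+1)/2}$ to the two sheets $t=\pm\ep$. I expect this to force the perturbation to be carried out $G(\eta)$-equivariantly rather than with a numerical constant $\ep$: since the swap generator of $G(\eta)$ sends $q\mapsto -q$ while $-I\in O(\frac{k+1}2)\times O(\frac{k+1}2)$ exchanges $t=\ep$ with $t=-\ep$, one should take the offset to be a nowhere-zero section of a suitable twist of $L$ (so that $\ep^2$ remains a genuine function), arranging the twisting of the cokernel along the two sheets to cancel and rendering $\nu$ orientable, hence trivial. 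Making this cancellation precise, together with verifying that the cut-off introduces no stray degeneracies, is the technical heart of the argument.

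Finally, the stable splitting follows formally once $\Th$ is Morse-Bott with trivial normal bundles. Since a Morse-Bott map is corank $\le1$, the differential $d\Th\co T{\bls\eta M}\to\Th^*TQ$ has cokernel a line bundle supported on the singular set, which by the remark following Definition~\ref{BottMorsedef} is $q_{\sim}^*\tilde\Th^*\nu$; by the previous step it is trivial and so admits a nowhere-zero section. Lifting this section to a section $\sigma$ of $\Th^*TQ$ transverse to $\im d\Th$ along the singular set and extending $\sigma$ arbitrarily, the bundle map $\Phi=d\Th\oplus\sigma\co T{\bls\eta M}\oplus\ep^1\to\Th^*TQ$ is surjective everywhere (off the singular set $d\Th$ already is; on it, $\sigma$ fills the missing line). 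Setting $\zeta^{k+1}:=\ker\Phi$, of rank $(n+1)-(n-k)=k+1$, and splitting the surjection by a choice of metric gives $T{\bls\eta M}\oplus\ep^{1}\cong\zeta^{k+1}\oplus\Th^*TQ$, as required.
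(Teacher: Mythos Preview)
Your local normal-form computation and the final derivation of the stable splitting from tameness are both fine and match the paper. The trouble is in the middle, where your perturbation scheme does not actually produce a Morse--Bott map.

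The issue is that you perturb only near the null quadric $\{\hat q=0\}\subset E$ and leave the rest of $E$ alone, expecting to get a closed index-$(0,k)$ stratum there. But outside your support the singular set is all of $\{t=0\}$, while inside (where $\rho\equiv 1$) it is $\{g=0,\ t=\pm\ep\}$; in the transition region the singular set of $(t^2-\ep^2\rho(g))g$ is $\{t=0,\ (\rho g)'=0\}\cup\{g=0,\ t^2=\ep^2\}$, and since $\rho g$ vanishes both at $g=0$ and at $|g|=\delta$ it must have an interior critical point. The $(n-1)$-dimensional piece of $E$ you want to keep therefore does not close up: it acquires a boundary along $\{|g|=\delta\}$, and there is no way to glue it to an $(n-2)$-dimensional index-$(1,k-1)$ stratum while satisfying Definition~\ref{BottMorsedef}, which demands that each $S_i$ be a \emph{closed} submanifold of fixed index. (Relatedly, even away from $\{g=0\}$ the sign of $g$ flips across that hypersurface, so the ``already Morse--Bott'' piece is not a single stratum with fixed $\lambda$ to begin with.) Your handwave about $G(\eta)$-equivariance for the cokernel is also left completely open.

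The paper sidesteps both problems by perturbing \emph{globally} on a full tubular neighbourhood of $E$, not just near $\{\hat q=0\}$: it sets $\Theta([(v,\tilde v)])=\varphi\big(\iota(v)+\ep\,\omega(\|v\|)\,\iota(\tilde v)\big)$, i.e.\ the first output coordinate becomes essentially $(t^2+\ep)\hat q$ rather than $(t^2-\ep^2)\hat q$ near $E$. Since $t^2+\ep>0$, the $t$-derivative now vanishes only at $t=0$, and the singular set inside $\pi^{-1}(\eta_p)$ reduces to the critical locus of $\hat q$ on the exceptional fibre $\RP^k$: two closed copies of $\RP^{(k-1)/2}$, giving a single Morse--Bott stratum of index $(1,(k-1)/2)$ (not $(0,k)$ and $(1,k-1)$). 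The quotient $\tilde\Sigma_{(1,(k-1)/2)}$ is then identified with the $\ep$-sphere bundle $S(\nu)$, on which the pullback of $\nu$ is canonically trivial (each point is a nonzero vector in its own fibre), so the normal-bundle triviality is immediate rather than a gap. Your final tameness-to-splitting step is then exactly the paper's appeal to Proposition~\ref{Kgamma}.
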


By extending Theorem~\ref{perturb} to  Morin maps, we obtain

\begin{thm}\label{perkov}
Let $k \geq 1$ be odd.
Assume there exists a Morin map $f \co M^n \to
Q^{n-k}$.
 Then the stable tangent bundle of $\bls \csi {M}$ splits as
$$
T{\bls \csi {M}} \oplus \ep^{2} \cong \zeta^{k+2} \oplus (\widetilde {\bls \csi f})^*TQ$$
for some $(k+2)$-dimensional vector bundle $\zeta^{k+2}$ over $\bls \csi M$
and perturbation $\widetilde {\bls \csi f}$ of ${\bls \csi f}$.
\end{thm}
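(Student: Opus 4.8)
The plan is to deduce the stable splitting from a bundle epimorphism and to construct that epimorphism by running the construction of Theorem~\ref{perturb} on the blown-up manifold, paying for the higher Morin singularities with one extra trivial summand. After fixing Riemannian metrics, the asserted isomorphism $T{\bls\csi M}\oplus\ep^2\cong\zeta^{k+2}\oplus(\widetilde{\bls\csi f})^*TQ$ is equivalent to the existence of a vector bundle epimorphism $T{\bls\csi M}\oplus\ep^2\to(\widetilde{\bls\csi f})^*TQ$, whose kernel is then the desired rank-$(k+2)$ bundle $\zeta^{k+2}$. Since any perturbation $\widetilde{\bls\csi f}$ of $\bls\csi f$ is homotopic to $\bls\csi f=f\circ\pi$, the target is $(\widetilde{\bls\csi f})^*TQ\cong\pi^*f^*TQ$, and over the regular locus of a generic $\widetilde{\bls\csi f}$ the differential $d(\widetilde{\bls\csi f})$ is already such an epimorphism. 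Hence the whole problem is local near the exceptional divisor $\tilde\imath(\pi^{-1}(\Si))$ and the singular set of the perturbed map.

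Next I would record the local structure of $\bls\csi f$ along the exceptional divisor. Blowing up $M$ along $\Si=\Sigma^{k+1}$ with normal bundle $\csi$ replaces $\Si$ by $\pi^{-1}(\Si)=\mathbb P(\csi)$, an $\RP^{k}$-bundle over $\Si$, and on this divisor the map $\bls\csi f$ collapses each $\RP^{k}$ fiber to a point, since it factors through the immersion $f|_\Si\co\Si\to Q$. Using the Morin normal forms along $\Sigma^{k+1,0}\supset\Sigma^{k+1,1,0}\supset\cdots$, the behaviour of $\bls\csi f$ transverse to the divisor is controlled by the reduced germ $\R^{k+1}\to\R$. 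Over the fold stratum this germ is the quadratic form $\sum_{i\le\la}-x_i^2+\sum_{i>\la}x_i^2$, whose normalization descends to a Morse function on $\RP^{k}$ with critical points at the eigendirections; this is exactly the fiberwise picture of Theorem~\ref{perturb}. Over $\Sigma^{k+1,0}$ I would therefore import the conclusion of Theorem~\ref{perturb}: a Morse-Bott perturbation together with one stabilizing trivial line and a kernel of rank $k+1$, with the $\Z_2$-monodromy of the balanced index $(k+1)/2$ directions already killed by the blow-up along $\csi$.

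The genuinely new work is over the cusp stratum $\Sigma^{k+1,1,0}$ and the higher strata, where the reduced germ acquires the term $x_1^{m+1}+z_1x_1^{m-1}+\cdots$ and the induced fiberwise function on $\RP^{k}$ fails to be Morse along the distinguished kernel direction. Here I would perturb $\bls\csi f$ so that this degenerate fiberwise family is unfolded, and stabilize the fiberwise differential by a single additional trivial line: adding one nondegenerate quadratic variable converts the obstructed cubic model into a controllable one and supplies the missing rank in the cokernel of $d(\widetilde{\bls\csi f})$ precisely along the cusp edge where two fold branches meet. This second line, together with the one inherited from the fold analysis, is what raises $\ep^{1},\zeta^{k+1}$ of Theorem~\ref{perturb} to $\ep^{2},\zeta^{k+2}$. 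The hard part will be the global coherence: the fiberwise stabilizations over the fold, cusp and higher strata, each giving a surjection of $T{\bls\csi M}\oplus\ep^2$ onto $(\widetilde{\bls\csi f})^*TQ$ in a neighbourhood of its part of the divisor, must be patched to one another and to $d(\widetilde{\bls\csi f})$ over the regular locus. Concretely, one must verify that the nowhere-zero section of $\Hom(\ep^2,\operatorname{coker})$ produced on each stratum extends across the incidence of the strata—so that the obstruction, which a single stabilizing line cannot kill along the cusp edge but two can, vanishes globally. Once this extension is carried out, the kernel of the resulting epimorphism is a rank-$(k+2)$ bundle $\zeta^{k+2}$ and the stable splitting $T{\bls\csi M}\oplus\ep^2\cong\zeta^{k+2}\oplus(\widetilde{\bls\csi f})^*TQ$ follows.
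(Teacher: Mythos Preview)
Your overall strategy---reduce the stable splitting to a bundle epimorphism $T\bls\csi M\oplus\ep^2\to(\widetilde{\bls\csi f})^*TQ$ and build that epimorphism from $d(\widetilde{\bls\csi f})$ plus a map out of $\ep^2$ that hits the cokernel---is exactly right, and agrees with the paper. But the paper's route to that epimorphism is quite different from yours, and your proposal has a real gap precisely at the place you flag as ``the hard part.''

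The paper does \emph{no} local analysis of the cusp or higher Morin strata at all. The key input you are missing is the result of Chess \cite{Che}: the cokernel line bundle $\coker df$ is automatically trivial over a neighbourhood of $\Sigma^{k+1,1}$ in $\Sigma$, and it also carries a natural orientation over the fold points of index $\neq (k+1)/2$, compatibly across $\Sigma^{k+1,1}$. So the only portion of $\Sigma$ where $\coker df$ might be nontrivial is (an open subset of) the index-$(k+1)/2$ fold locus, away from all higher strata. The paper then perturbs $\bls\csi f$ \emph{only} over that region, using exactly the perturbation of Theorem~\ref{perturb} with the amplitude $\ep$ tapered to $0$ near the boundary. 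This produces a map $\widetilde{\bls\csi f}$ whose cokernel line bundle is trivial on each of two open sets covering the singular locus: the perturbed region $X_0$ (trivial by Theorem~\ref{perturb}) and a neighbourhood $X_1$ of the unperturbed part (trivial by Chess and the index argument, pulled back through $\pi$). Lemma~\ref{thm:LS} then gives an epimorphism $\ep^2\to\coker d\widetilde{\bls\csi f}$ from these two trivializations, and adding it to $d\widetilde{\bls\csi f}$ finishes the proof. Thus the second $\ep^1$ comes not from ``stabilizing the cubic model'' over the cusp edge, but simply from the Lusternik--Schnirelmann-type fact that a line bundle trivial on a two-piece open cover is a quotient of $\ep^2$.

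By contrast, your plan requires you to unfold the fiberwise degenerate functions over each higher stratum and then coherently glue the resulting sections of $\Hom(\ep^2,\coker)$ across all strata incidences. You correctly identify this as the crux, but you do not carry it out; and it is genuinely delicate, since the strata $\Sigma^{k+1,1,\dots,1}$ of a general Morin map can be arbitrarily deep. So as written the proposal is incomplete. The fix is to replace the stratum-by-stratum stabilization with the observation above: invoke \cite{Che} to trivialize $\coker df$ near the higher strata and over the non-middle fold indices, perturb only over the remaining index-$(k+1)/2$ fold region via Theorem~\ref{perturb}, and apply Lemma~\ref{thm:LS}.
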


\begin{rem}
In Theorems~\ref{perturb} and \ref{perkov} 
if $Q$ is stably parallelizable, then 
the bundles 
$T{\bls \eta {M}}$ and
$T{\bls \csi {M}}$ 
are stably equivalent to  $(k+1)$- and $(k+2)$-dimensional bundles, respectively.
\end{rem}

\begin{rem}\label{foldpertkov}
If we
blow up $M$ along all the singular set $\Si$ of a fold map, then
we can perturb $\bls \csi f \co \bls \csi {M} \to Q$ in 
a neighborhood of $\pi^{-1}(\Sigma^{k+1,0}_{(k+1)/2})$ so that
the stable tangent bundle of $\bls \csi {M}$ splits as
$$
T{\bls \csi {M}} \oplus \ep^{1} \cong \zeta^{k+1} \oplus (\widetilde {\bls \csi f})^*TQ$$
for some $(k+1)$-dimensional vector bundle $\zeta^{k+1}$ over $\bls \csi M$
and perturbation $\widetilde {\bls \csi f}$ of ${\bls \csi f}$.
\end{rem}

\section{Characteristic classes of the source manifold}

By using the results of \S2, we obtain the following relations between the
Stiefel-Whitney classes of the source manifold of a Morin map.

\begin{thm}\label{charclass}
Let $k$ be odd,  $M$ be an orientable $n$-manifold and 
$Q$ be an orientable $(n-k)$-manifold.
Assume  $K \geq 0$ is such that $w_i(TQ) = 0$
for $i > K$,
furthermore for any $m$ and $j=1, \ldots, m$ let $r_j, s_j \geq k+3 + K$  be 
natural numbers such that $\sum_{j=1}^m r_j = \sum_{j=1}^m s_j$. If
$M$   admits a Morin map into $Q$, then
\begin{equation}\label{piequ}
 w_{r_1}(TM)\cdots w_{r_m}(TM) = 
 w_{s_1}(TM)\cdots w_{s_m}(TM).
\end{equation} 
The same holds under the relaxed condition $r_j, s_j \geq k+2 + K$ if there is
a fold map of $M$ into $Q$.
\end{thm}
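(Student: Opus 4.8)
The plan is to verify \eqref{piequ} after applying the injective homomorphism $\pi^*\co H^*(M;\Z_2)\ra H^*(\bls\csi M;\Z_2)$, so that it suffices to prove the corresponding relation among the classes $\pi^* w_d(TM)$ in $H^*(\bls\csi M;\Z_2)$. By Theorem~\ref{perkov} in the Morin case (resp.\ Remark~\ref{foldpertkov} in the fold case) the total Stiefel--Whitney class factors as $w(T\bls\csi M)=w(\zeta)\cdot(\widetilde{\bls\csi f})^* w(TQ)$, where $\zeta$ has rank $k+2$ (resp.\ $k+1$) and $(\widetilde{\bls\csi f})^* w(TQ)$ is supported in degrees $\le K$. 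Being the product of a class of degree $\le k+2$ (resp.\ $\le k+1$) with one of degree $\le K$, this total class has no component above degree $k+2+K$ (resp.\ $k+1+K$); hence $w_d(T\bls\csi M)=0$ for all $d>k+2+K$ (resp.\ $d>k+1+K$).

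Next I would compare $w(T\bls\csi M)$ with $\pi^* w(TM)$. Since $\pi$ is a diffeomorphism away from the exceptional divisor $E=\pi^{-1}(\Si)$, the two bundles agree there, so for each $d$ the difference $\pi^* w_d(TM)-w_d(T\bls\csi M)$ is supported on $E$ and equals $\tilde\imath_!(\alpha_{d-1})$ for a unique $\alpha_{d-1}\in H^{d-1}(E;\Z_2)$, via the Thom isomorphism of the normal line bundle of $E$. Let $u\in H^1(E;\Z_2)$ be the first Stiefel--Whitney class of that line bundle. Using $\tilde\imath^*\pi^*=p^* i^*$ and $\tilde\imath^*\tilde\imath_!(\beta)=u\cup\beta$, restriction to $E$ gives the controlling identity $\tilde\imath^* w_d(T\bls\csi M)=p^* i^* w_d(TM)+u\cup\alpha_{d-1}$. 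This is where the $+1$ in the hypothesis enters: once $d>k+2+K$ (resp.\ $d>k+1+K$) the left-hand side vanishes, so $\pi^* w_d(TM)=\tilde\imath_!(\alpha_{d-1})$ becomes a \emph{purely exceptional} class, with $u\cup\alpha_{d-1}=p^* i^* w_d(TM)$. Every factor in \eqref{piequ} has degree $\ge k+3+K$ (resp.\ $\ge k+2+K$), hence acquires exactly this rigid form after pullback.

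I would then expand both products by iterating the projection formula $\tilde\imath_!(x)\cup\tilde\imath_!(y)=\tilde\imath_!(u\cup x\cup y)$, obtaining $\prod_j\pi^* w_{r_j}(TM)=\tilde\imath_!\bigl(u^{m-1}\prod_j\alpha_{r_j-1}\bigr)$ and likewise for the $s_j$. Multiplying the argument by one more factor $u$ and using $u\cup\alpha_{r_j-1}=p^* i^* w_{r_j}(TM)$ collapses it to $p^* i^*\bigl(\prod_j w_{r_j}(TM)\bigr)$, a class pulled back from $\Si$. Thus both products are governed by the single relation $\prod_j i^* w_{r_j}(TM)=\prod_j i^* w_{s_j}(TM)$ in $H^*(\Si;\Z_2)$, that is, by \eqref{piequ} for the restricted bundle $TM|_\Si=i^* TM$. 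For a fold map this relation on $\Si$ follows from the immersion structure $T\Si\oplus\nu_1\cong(f|_\Si)^* TQ$, $\nu_1$ a line bundle: then $w(TM|_\Si)=w(\csi)\,(f|_\Si)^* w(TQ)\,(1+a)^{-1}$ with $a=w_1(\nu_1)$, a bounded-degree class times the geometric series $(1+a)^{-1}$, so $w_d(TM|_\Si)=a^{\,d-e}w_e(TM|_\Si)$ for all large $d\ge e$, and a product of such high-degree factors depends only on the degree sum and on $m$. For Morin maps, where $f|_\Si$ is itself of fold type with singular set the next Boardman stratum $\Si^{k+1,1}$, I would reach the same conclusion by downward induction along $\Si^{k+1}\supset\Si^{k+1,1}\supset\cdots$, the immersion case serving as the base.

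The main obstacle I anticipate is the final transfer back across the Gysin map: the relation on $\Si$ only yields $u\cup\bigl(u^{m-1}\prod_j\alpha_{r_j-1}-u^{m-1}\prod_j\alpha_{s_j-1}\bigr)=0$, and from this one must still conclude that the images under $\tilde\imath_!$ coincide. Controlling this $u$-torsion ambiguity, together with tracking the thresholds along the inductive Boardman flag in the Morin case, is the technical heart of the argument. I expect to resolve it by exploiting the exact geometric-series form of the classes on $\Si$ to determine the unique $\alpha_{d-1}$ itself (not merely $u\cup\alpha_{d-1}$), so that the two products become literally equal before $\tilde\imath_!$ is applied.
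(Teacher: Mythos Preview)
Your overall architecture matches the paper: pull back to $\bls\csi M$ via the injective $\pi^*$, use Theorem~\ref{perkov}/Remark~\ref{foldpertkov} to kill $w_d(T\bls\csi M)$ in high degree, write $\pi^*w_r(TM)=\tilde\imath_!(\alpha_{r-1})$, and collapse products using $\tilde\imath_!(x)\tilde\imath_!(y)=\tilde\imath_!(u\cup x\cup y)$. The paper uses the equivalent form $\tilde\imath_!(u)\tilde\imath_!(v)=\tilde\imath_!(1)\tilde\imath_!(uv)$.

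The obstacle you flag is genuine and is \emph{exactly} the point where your argument stops short of a proof. From $u\cup\alpha_{r-1}=p^*i^*w_r(TM)$ alone you cannot recover $\tilde\imath_!(u^{m-1}\prod\alpha_{r_j-1})$: multiplication by $u$ on $H^*(E;\Z_2)$ has nontrivial kernel (already on the fiber $\RP^k$ the class $u^k$ is killed), so knowing $u^m\prod\alpha_{r_j-1}$ does not determine $u^{m-1}\prod\alpha_{r_j-1}$ modulo $\ker\tilde\imath_!$. Your proposed fix---compute $\alpha_{r-1}$ itself---is the right instinct, but you have given no mechanism for doing so; the relation you derived only sees $u\cup\alpha_{r-1}$. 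The paper supplies the missing mechanism: it invokes the explicit blowup formula of Geiges--Pasquotto \cite{GePa}, which expresses $w(T\bls\csi M)-\pi^*w(TM)$ as $\tilde\imath_!$ of an explicit polynomial in $p^*w(T\Si)$, $p^*w(\csi)$ and $w_1(\mu)$. This gives $\alpha_{r-1}$ on the nose, not merely $u\cup\alpha_{r-1}$, and after substituting a closed form for $w(T\Si)$ one reads off $\alpha_{r-1}=\alpha\cdot p^*b^{\,r-k-K-2-\de}$ with $\alpha$ independent of $r$; the product then manifestly depends only on $m$ and $\sum r_j$.

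Your treatment of the Morin case is also off. The paper does \emph{not} induct down the Boardman flag. Instead it uses the orientability hypothesis and \cite{Sad} to perturb the Morin map to a \emph{cusp} map, and then proves directly (the Proposition preceding Corollary~\ref{thm:tsigma}) that for a cusp map $T\Si\oplus\ga\cong f^*df(TM)|_\Si\oplus\ep^1$, where $w_1(\ga)=c$ is Poincar\'e dual to $\Si^{k+1,1}$ and $c^2=0$ since $\Si^{k+1,1}$ has trivial normal bundle in $\Si$. This yields $w(T\Si)=(f|_\Si)^*w(TQ)(1+b)^{-1}(1+c)^{-1}$, a bounded-degree class times two geometric series (with $c^2=0$), which is what makes $w_{r-1-q}(T\Si)=b^{\,r-1-q-K-\de}w_{K+\de}(T\Si)$ hold in the required range and accounts for the extra $+1$ in the Morin threshold. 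Your sentence ``$f|_\Si$ is itself of fold type'' is not quite right: $f|_\Si$ maps an $(n-k-1)$-manifold to an $(n-k)$-manifold, and for a general Morin $f$ the singularities of $f|_\Si$ along the deeper strata are not fold in any sense that would drive an induction without further work.
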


 For example, $w_5(T\RP^{13})= 0$ and $w_6(T\RP^{13}) \neq 0$, thus there is no Morin map of
$\RP^{13}$ into $\R^{12}$.

\begin{rem}
\noindent
\begin{enumerate}[\rm (1)]
\item
Theorem~\ref{charclass} holds 
also if $M$ and $Q$ are possibly non-orientable and the Morin map of $M$ into $Q$
is a cusp map.
\item
Note that if $k \geq 0$ and $k$ is even, then  (\ref{piequ}) obviously
holds for a fold map if $r_j, s_j \geq k+2 + K$ since
$w_j(TM) = 0$ for $j \geq k+2+K$, see  \cite{An}.
\end{enumerate}
\end{rem}

\par

By Proposition~\ref{RPfold}  and applying the above to maps of the projective spaces $\RP^n$, we obtain

\begin{cor}
Let $n=2^d+m$ with $0\leq m < 2^d-2$, where $m$ is odd. 
There is no Morin map of $\RP^n$ into $\R^{n-k}$ for $k+2 < m$ if $k$
is odd.
There is no fold map of $\RP^n$ into $\R^{n-k}$ for $k+1 < m$.
\end{cor}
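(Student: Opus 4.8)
The plan is to reduce the nonexistence statement to a single failed relation among Stiefel--Whitney classes of $\RP^n$ and then invoke Theorem~\ref{charclass}. Recall that $H^*(\RP^n;\Z_2) = \Z_2[a]/(a^{n+1})$ with $\deg a = 1$, and that $w_i(T\RP^n) = \binom{n+1}{i} a^i$. Since the target $\R^{n-k}$ is parallelizable, we may take $K = 0$ in Theorem~\ref{charclass}; thus the existence of a Morin map (with $k$ odd) forces $\prod_j w_{r_j}(T\RP^n) = \prod_j w_{s_j}(T\RP^n)$ for all index collections with $r_j, s_j \ge k+3$ and $\sum_j r_j = \sum_j s_j$, while a fold map (with $k$ odd) forces the same under the weaker bound $r_j, s_j \ge k+2$. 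My aim is to exhibit one such relation that is violated in $H^*(\RP^n;\Z_2)$.

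The relation I would use is the two-factor identity $w_{m+1}(T\RP^n)^2 = w_m(T\RP^n)\, w_{m+2}(T\RP^n)$, which pairs the degree-$(2m+2)$ monomials $(m+1,m+1)$ and $(m,m+2)$. First I would evaluate each side by Lucas' theorem using the binary expansion $n+1 = 2^d + (m+1)$, where $m+1$ is even because $m$ is odd, so the lowest binary digit of $n+1$ vanishes. Since $m+1 < 2^d$ is a submask of $n+1$, the coefficient $\binom{n+1}{m+1}$ is odd and $w_{m+1} = a^{m+1} \ne 0$; since $m$ is odd its lowest digit is set while that of $n+1$ is not, so $\binom{n+1}{m}$ is even and $w_m = 0$. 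From $m < 2^d - 2$ (hence $m \le 2^d - 3$) one checks $2(m+1) \le n-1 < n$, whence $w_{m+1}^2 = a^{2m+2} \ne 0$, whereas $w_m w_{m+2} = 0$. Thus the required identity fails.

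It then remains to match the indices $m, m+1, m+2$ against the degree thresholds. For a Morin map with $k$ odd the bound $k+3$ demands $m \ge k+3$, i.e.\ exactly $k+2 < m$; for a fold map with $k$ odd the bound $k+2$ demands $m \ge k+2$, i.e.\ exactly $k+1 < m$. This pins down the two hypotheses of the corollary. The remaining case of a fold map with $k$ even is separate: here Theorem~\ref{charclass} does not apply, but by the vanishing $w_j(T\RP^n) = 0$ for $j \ge k+2$ (the Remark following Theorem~\ref{charclass}, via \cite{An}) the single class $w_{m+1} \ne 0$ with $m+1 \ge k+2$ already yields a contradiction.

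The main obstacle is the combinatorial pinch in the middle step: one must produce indices that simultaneously lie in the admissible window $[\,\text{threshold},\, n\,]$, make $w_m$ vanish while $w_{m+1}$ survives, and satisfy $2(m+1) \le n$ so that $a^{2m+2} \ne 0$. It is precisely the arithmetic constraints ``$m$ odd'' and ``$m < 2^d - 2$'' that make all three conditions hold for the choice above, and verifying this through Lucas' theorem is the crux (this is the content of Proposition~\ref{RPfold}). Everything else is formal once Theorem~\ref{charclass} is in hand.
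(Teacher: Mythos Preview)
Your argument is correct and follows essentially the paper's route: exhibit the failed two-factor relation $w_{m+1}^2 = w_m w_{m+2}$ on $\RP^n$ and feed it into Theorem~\ref{charclass} (for odd $k$) or the Ando vanishing (for even $k$, fold case). There is one omission, though, and it concerns the role of the hypothesis ``$m$ odd''.

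You attribute ``$m$ odd'' to the combinatorics, using it to force $w_m=0$. That works, but it is not the primary reason the hypothesis is there, and it is not how Proposition~\ref{RPfold} argues: that proposition assumes only $0 \le m < 2^d-2$ and kills the product via the \emph{other} factor, namely $w_{m+2}=0$, which follows from Lemma~\ref{thm:middleNull} without any parity assumption on $m$. The actual reason the corollary needs $m$ odd is that Theorem~\ref{charclass} requires $M$ orientable; since $n = 2^d + m$ with $d \ge 2$ (forced by $1 \le m < 2^d - 2$), oddness of $m$ makes $n$ odd and hence $\RP^n$ orientable. You never check this hypothesis, so as written your invocation of Theorem~\ref{charclass} has a gap. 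Once you add the one-line orientability check, your proof is complete and matches the paper's.
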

 For example, there is no fold map from $\RP^{13}$ to $\R^j$ with
$10 \leq j \leq 13$.

\begin{rem}
By \cite[Corollary~11.15]{MiSta}, we obtain the analogous result for
closed $n$-manifolds $M$ with $H^*(M;\Z_2) \cong H^*(\RP^n;\Z_2)$, and 
also a more general result
for any closed manifold whose cohomology ring 
with $\Z_2$ coefficients
is generated by one element.
\end{rem}

Now let us  consider Pontryagin classes. 

\begin{thm}\label{szingtusk}
Let $f \co M^n \to Q^{n-k}$, $n > k \geq 0$, be a smooth map with $\rank df
\geq n-k-1$ and let  $Q$ be  stably parallelizable. Then the rational
Pontryagin classes $p_i^{\Q}(TM) \in H^{4i}(M;\Q)$ vanish for $2i > k+1$.
\end{thm}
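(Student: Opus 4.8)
The plan is to work rationally throughout and to localise the high Pontryagin classes of $\nu := TM - f^{*}TQ$ on the singular set. Since $Q$ is stably parallelizable, $f^{*}TQ$ is stably trivial, so $p^{\Q}(f^{*}TQ)=1$ and hence $p_i^{\Q}(TM)=p_i^{\Q}(\nu)$ for all $i$; thus it suffices to show $p_i^{\Q}(\nu)=0$ for $2i>k+1$. The condition $\rank df \geq n-k-1$ is open (rank is lower semicontinuous, $M$ compact), so I may first perturb $f$, without leaving the class of corank $1$ maps and without changing $TM$, to a generic corank $1$ map. Then $\Sigma$ is a closed submanifold of codimension $k+1$ with normal bundle $\csi$; on $U:=M\setminus\Sigma$ the kernel $V:=\ker df$ is an honest rank $k$ bundle with $\nu|_U\cong V$; and along $\Sigma$ the differential has constant corank $1$, giving an honest rank $k+1$ kernel bundle $K=\ker(df|_\Sigma)$ and an honest cokernel line bundle $\ell$, with $\nu|_\Sigma \cong K - \ell$.

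The key numerical input is that the two obvious restrictions of the high Pontryagin classes vanish. First, $p_i^{\Q}(\nu)|_U = p_i^{\Q}(V)=0$ for $2i>k$ because $V$ has rank $k$. Second, since $\ell$ is a real line bundle we have $p^{\Q}(\ell)=1$, so $p_i^{\Q}(\nu)|_\Sigma = p_i^{\Q}(K-\ell)=p_i^{\Q}(K)=0$ for $2i>k+1$ because $K$ has rank $k+1$. Conceptually this is where the bound $k+1$ comes from: across the corank $1$ locus the kernel can jump up by exactly one dimension, while the simultaneously appearing cokernel is only a line bundle and is therefore invisible to rational Pontryagin classes.

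It remains to promote these pointwise statements to the global vanishing $p_i^{\Q}(\nu)=0$. For $2i>k$ the class $p_i^{\Q}(\nu)$ is supported on $\Sigma$, i.e.\ it lies in the image of $H^{4i}(M,U;\Q)\to H^{4i}(M;\Q)$, and by the Thom isomorphism $H^{4i}(M,U;\Q)\cong H^{4i-(k+1)}(\Sigma;\Q_{w_1(\csi)})$ it is the push-forward of a localised class $\alpha_i\in H^{4i-(k+1)}(\Sigma;\Q_{w_1(\csi)})$ (with twisted coefficients when $\csi$ is non-orientable). I would identify $\alpha_i$ by computing the clutching of $V$ around the normal sphere bundle $S(\csi)=\partial N$ of a tubular neighbourhood $N$ of $\Sigma$ from the local normal forms of $df$ transverse to $\Sigma$: this expresses $\alpha_i$ through the rank $k+1$ bundle $K$, the line bundle $\ell$ and the normal bundle $\csi$ alone, so that — exactly as in the computation of $p_i^{\Q}(\nu|_\Sigma)$ — its rational Pontryagin content is capped at rank $k+1$ and it vanishes for $2i>k+1$. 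Equivalently, in the Mayer--Vietoris sequence for $M=U\cup N$ one has $p_i^{\Q}(\nu)=\delta(\beta_i)$ with $\beta_i\in H^{4i-1}(S(\csi);\Q)$, and the clutching datum $\beta_i$ is governed by the same rank $\leq k+1$ bundles; I would show $\delta(\beta_i)=0$ in this range.

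The main obstacle is precisely this computation of the localised (defect) class. Unlike a fold map, a general corank $1$ map is singular along the deeper Boardman strata $\Sigma^{k+1,1},\Sigma^{k+1,1,1},\dots$ inside $\Sigma$, so the transverse structure of $df$ — hence the clutching of $V$ over $S(\csi)$ — is not uniform; moreover $\Sigma$, $\csi$ and $\ell$ may all be non-orientable, forcing twisted coefficients in the Thom isomorphism. What makes the argument go through rationally is that only the corank of $df$, which is constantly $1$ along $\Sigma$, controls the cokernel, so the relation $\nu|_\Sigma\cong K-\ell$ with $\ell$ a line bundle holds uniformly; since a real line bundle has trivial rational Pontryagin class, the localised class depends only on rank $\leq k+1$ data and the finer singularity type of $f|_\Sigma$ is irrelevant for the Pontryagin count. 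Making the localised class explicit and checking its vanishing for $2i>k+1$ uniformly over all corank $1$ singularity types and in the twisted setting is the heart of the proof.
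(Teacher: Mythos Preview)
Your approach has a genuine gap at exactly the point you flag as the ``heart of the proof''. Knowing that $p_i^{\Q}(\nu)$ restricts to zero on both $U$ and on a tubular neighbourhood $N$ of $\Sigma$ only tells you that $p_i^{\Q}(\nu)=\delta(\beta_i)$ for some $\beta_i\in H^{4i-1}(S(\csi);\Q)$; you still have to compute $\delta(\beta_i)$ and show it vanishes. Your heuristic for this --- that the localised class $\alpha_i$ ``depends only on rank $\leq k+1$ data'' and is therefore ``capped at rank $k+1$'' --- conflates $\alpha_i$ with the restriction $p_i^{\Q}(\nu)|_\Sigma$. These are different objects: $\alpha_i$ lives in degree $4i-(k+1)$, and even if it is a polynomial in the Pontryagin classes of $K$, $\ell$, and $\csi$, such polynomials (products of lower $p_j$'s) can be nonzero in degrees far above $2(k+1)$. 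So there is no dimension count forcing $\alpha_i=0$, and no substitute argument is offered. The clutching computation you allude to would have to be done honestly, across all the deeper Boardman strata of a generic corank~$1$ map, and you have not done it.

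The paper's proof bypasses all of this with a short, completely different argument. Fix a trivialisation of $TQ\oplus\ep^l$ by $n-k+l$ independent $1$-forms, pull them back by $df$ to get $n-k+l$ sections of $TM\oplus\ep^l$; since $\rank df\geq n-k-1$ everywhere, these sections span a subspace of dimension at least $n-k+l-1$ at every point. By the degeneracy-locus description of rational Pontryagin classes (Pontryagin, Rohlin, Gabrielov), $p_i^{\Q}(TM\oplus\ep^l)$ is represented by the locus where $n+l-2i+2$ generic sections span at most $n+l-2i$ dimensions. When $2i\geq k+2$ one has $n-k+l\geq n+l-2i+2$, and an appropriate subcollection of the constructed sections still has span $\geq n+l-2i+1$ everywhere, so the degeneracy locus is empty and $p_i^{\Q}(TM)=0$. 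No perturbation of $f$, no stratification of $\Sigma$, and no localisation is needed.
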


\begin{rem}\label{pontpeld}
For $n \geq 2$ 
the class $p_{\lfloor n/2 \rfloor}(T\CP^{n})$ is equal to $\binom{n+1}{{\lfloor
n/2 \rfloor}}y$, where $y$ is the standard generator of $H^{4\lfloor n/2
\rfloor}(\CP^n)$ and hence $p_{\lfloor n/2 \rfloor}^{\Q}(T\CP^{n})$ does not vanish.
Hence there is no corank $1$ map of $\CP^n$ to a stably parallelizable
target $Q^{2n-k}$ if $\lfloor n/2 \rfloor \geq (k+2)/2$.
For example, there exists no Morin map from $\CP^{2}$ to $Q^{4}$, and
from $\CP^{4}$ to $Q^{7}$ (cf.\  \cite[Example~4.9]{OhmSaeSak}) or to $Q^8$
(cf.\  \cite[Theorems~1.2 and 1.3]{SaeSak}), and there exists no Morin map
of $\CP^{49}$ to $Q^{93}$ (cf.\  \cite[Remark~4.5]{OhmSaeSak}).
\end{rem}

Finally, from the viewpoint of K-theory
 and  $\ga$ operations, we have the following\footnote{We presented these results at the conference ``Singularity
theory of smooth maps and related geometry'', RIMS, Tokyo, and
on the Topology Seminar at Kyushu University, Fukuoka, in 2009 December.}.
Recall that for a finite CW-complex $X$ the geometric dimension $g.dim(x)$
of an element $x \in \rKO(X)$ is the least integer $k$ such that $x+k$ is a
class of a genuine vector bundle over $X$ (see e.g. \cite{At}).

\par

We call a  corank $1$ map  $f \co M \to Q$ {\it tame}
if 
the $1$-dimensional cokernel bundle $\coker df|_\Si$
 of 
 the restriction $df|_\Si \co TM|_\Si \to f^*TQ$ is trivial.
 For example,
every fold map is tame for  $k \equiv 0 \mod{2}$ \cite {An} and
it is easy to construct not tame fold maps for  odd $k \leq n-3$, even between
orientable manifolds.
Also note that
a Morse-Bott map $f$ is tame if and only if all
the normal bundles of the immersions $\tilde f_{(\la, k+1-l)}$ are trivial.

Let $M^n$ and $Q^{n-k}$ be a closed $n$-manifold and an 
$(n-k)$-manifold, respectively.
\begin{prop}\label{Kgamma}
The following are equivalent:
\begin{enumerate}[\rm (1)]
\item
$M$ admits a tame corank $1$ map into $Q$,
\item
there is a fiberwise epimorphism $TM \+ \ep^1 \to TQ$.
\end{enumerate}
If $Q$ is stably parallelizable, then {\rm {(1)}} and {\rm {(2)}} hold if and only if 
$g.dim([TM] - [\ep^n]) \leq k+1$.
\end{prop}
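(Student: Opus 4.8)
The plan is to prove the two biconditionals separately: first the equivalence of (1) and (2) in full generality, and then---under the stable parallelizability of $Q$---the equivalence of (2) with the geometric dimension bound. Throughout I work with the pullback $f^*TQ$, so that a ``fiberwise epimorphism $TM \oplus \ep^1 \to TQ$'' means a fiberwise surjective bundle map $TM \oplus \ep^1 \to f^*TQ$ covering a map $f \co M \to Q$.

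For (1) $\Rightarrow$ (2) I would argue directly. Given a tame corank $1$ map $f \co M \to Q$, I define $\Phi \co TM \oplus \ep^1 \to f^*TQ$ by $\Phi(v,t) = df(v) + t \cdot s$, where $s$ is a section of $f^*TQ$ whose restriction to $\Si$ projects to a nowhere-zero section of the cokernel line bundle $\coker df|_\Si$; such an $s$ exists precisely because tameness makes this line bundle trivial (on $\Si$ the rank of $df$ is the constant value $n-k-1$, so the cokernel is a genuine line bundle), and $s$ may be extended off $\Si$ arbitrarily. Off $\Si$ the differential $df$ is already surjective, while on $\Si$ it has corank $1$ and $s$ supplies the missing direction, so $\Phi$ is a fiberwise epimorphism. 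For the converse (2) $\Rightarrow$ (1) I would invoke the Ando--Saeki theorem \cite{An, Sa1}: a bundle epimorphism $TM \oplus \ep^1 \to TQ$ produces a fold map with trivial cokernel on its singular set, and such a map is a tame corank $1$ map.

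For the second biconditional, suppose $Q$ is stably parallelizable. To prove (2) $\Rightarrow g.dim([TM]-[\ep^n]) \leq k+1$, I fix a metric and split the epimorphism as $TM \oplus \ep^1 \cong K \oplus f^*TQ$, where $K = \ker$ has rank $(n+1)-(n-k) = k+1$. Stable parallelizability makes $f^*TQ$ stably trivial, so in $\KO(M)$ one gets $[TM] - [\ep^{n-k-1}] = [K]$; since $[TM]-[\ep^{n-k-1}] = ([TM]-[\ep^n]) + (k+1)$ and $K$ is a genuine rank-$(k+1)$ bundle, the bound follows. For the reverse implication I would start from a genuine bundle $\ze^{k+1}$ with $[TM]-[\ep^{n-k-1}] = [\ze^{k+1}]$, which rearranges to the stable equality $[TM \oplus \ep^1] = [\ze^{k+1} \oplus \ep^{n-k}]$ in $\KO(M)$. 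Both sides are genuine bundles of rank $n+1 > n = \dim M$, so by the stable-range cancellation theorem for vector bundles (cf.\ \cite{At}) they are actually isomorphic; taking $f$ to be a constant map, so that $f^*TQ = \ep^{n-k}$, the projection onto the trivial summand yields the desired epimorphism $TM \oplus \ep^1 \to f^*TQ$.

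I expect the main obstacle to be the rank bookkeeping in this last step: the whole equivalence rests on promoting a stable isomorphism to a genuine one, which is legitimate only because both bundles have rank $n+1$ strictly exceeding $\dim M = n$. This is exactly what converts the genuine-bundle condition encoded in $g.dim \leq k+1$ into the existence of an honest fiberwise epimorphism. The remaining ingredients are comparatively routine: the direct construction in (1) $\Rightarrow$ (2) and the $\KO$-theoretic computation, together with the appeal to Ando--Saeki for (2) $\Rightarrow$ (1).
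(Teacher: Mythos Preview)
Your proposal is correct and follows essentially the same approach as the paper: construct the epimorphism directly from a trivialization of the cokernel for (1) $\Rightarrow$ (2), invoke Ando's theorem \cite{An} for (2) $\Rightarrow$ (1), and handle the stably parallelizable case via kernel splitting plus stable-range cancellation. Your explicit attention to the rank condition $n+1 > \dim M$ and the choice of a constant $f$ to realize $f^*TQ = \ep^{n-k}$ make the final step slightly more transparent than the paper's terse version.
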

For a finite CW-complex $X$, let $\la_t = \sum_{i=0}^{\infty} \la^i t^i$,
where $\la^i$ are the exterior power operators (for details, see \cite{At}).
Define $\ga_t = \sum_{i=0}^{\infty} \ga^i t^i$ to be the homomorphism
$\la_{t/1-t}$ of ${\KO}(X)$ into the multiplicative group of formal power
series in $t$ with coefficients in ${\KO}(X)$ and constant term $1$.
By the above proposition and \cite[Proposition~2.3]{At}, we immediately have
\begin{cor}\label{gam}\footnote{Compare with \cite[Proposition~3.2]{At}.}
If $M^n$ admits a tame corank $1$ map into a stably parallelizable $Q^{n-k}$, then
\begin{enumerate}[\rm (1)]
\item
$w_i(TM) = 0$ for $i \geq k+2$,
\item
$p_i(TM) = 0$ for $2i > k+1$,
\item
$\ga^i([TM] - [\ep^n]) = 0$ for $i \geq k+2$.
\end{enumerate}
\end{cor}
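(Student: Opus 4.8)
The plan is to reduce every assertion to the single geometric-dimension estimate furnished by Proposition~\ref{Kgamma} and then read off each conclusion from the fact that a genuine bundle of small rank represents the stable tangent class. First I would invoke Proposition~\ref{Kgamma}: since $M$ carries a tame corank $1$ map into the stably parallelizable manifold $Q^{n-k}$, the stated equivalence gives $g.dim([TM]-[\ep^n]) \leq k+1$. By the definition of geometric dimension recalled just before the proposition, this means that $[TM]-[\ep^n]+(k+1)$ is the class of a genuine $(k+1)$-dimensional vector bundle $E$ over $M$; equivalently $TM \oplus \ep^{k+1} \cong E \oplus \ep^{n}$ stably, so that $[TM]=[E]$ in $\rKO(M)$ up to a trivial summand and, in particular, $TM$ and $E$ have identical stable characteristic classes.

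For part (1), I would use that Stiefel--Whitney classes are stable, so $w(TM)=w(E)$; since $E$ has rank $k+1$, its Stiefel--Whitney classes vanish above degree $k+1$, giving $w_i(TM)=w_i(E)=0$ for $i \geq k+2$. For part (2), the same stability (exact for a trivial summand, since complexifying a trivial bundle gives a trivial bundle) yields $p_i(TM)=p_i(E)$, and a real bundle of rank $k+1$ has $p_i=0$ whenever $2i>k+1$: this follows from $p_i(E)=(-1)^i c_{2i}(E\otimes\C)$ together with the fact that $E\otimes\C$ has complex rank $k+1$, so $c_{2i}$ vanishes once $2i>k+1$.

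For part (3), I would appeal directly to \cite[Proposition~2.3]{At}, which asserts that a geometric-dimension bound $g.dim(x)\leq d$ forces $\ga^i(x)=0$ for all $i>d$. Applying this with $x=[TM]-[\ep^n]$ and $d=k+1$ gives $\ga^i([TM]-[\ep^n])=0$ for $i \geq k+2$, completing the corollary.

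The only points demanding any care are bookkeeping rather than substance: keeping straight the normalization in the definition of geometric dimension—namely that $g.dim \leq k+1$ really produces an honest bundle of rank exactly $k+1$ representing the stable class—and the elementary identity bounding the top nonzero Pontryagin class of a rank-$(k+1)$ real bundle. Neither is a genuine obstacle; the entire content of the statement is already packaged in Proposition~\ref{Kgamma} and Atiyah's proposition, so the corollary is indeed essentially immediate once those are in hand.
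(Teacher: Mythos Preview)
Your argument is correct and follows exactly the route the paper indicates: the paper's entire justification is the sentence ``By the above proposition and \cite[Proposition~2.3]{At}, we immediately have'', and you have simply unpacked that sentence---Proposition~\ref{Kgamma} supplies the bound $g.dim([TM]-[\ep^n])\leq k+1$, which yields a rank-$(k+1)$ representative for the stable tangent class and hence (1) and (2), while Atiyah's Proposition~2.3 gives (3) directly.
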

\begin{rem}\label{kettohatv}
Note that  the conditions (1) and (2) may not give strong results in general: for
example, all the positive degree Stiefel-Whitney and Pontryagin classes of
$\RP^{2^n-1}$ vanish\footnote{We have $w(T\RP^{2^n-1}) = (1+x)^{2^n} = 1 \in
\Z_2[x]/x^{2^n} = H^*(\RP^{2^n-1}; \Z_2)$, where $x$ denotes the generator
of $H^1(\RP^{2^n-1}; \Z_2)$. The natural homomorphism $H^s(\RP^{2^n-1}; \Z)
\to H^s(\RP^{2^n-1}; \Z_2)$ is an isomorphism for all positive even $s$. Our
claim follows by applying the fact that $p_i \equiv w_{2i}^2 \mod{2}$.}, and
if $k+1 \geq n/2$, then condition (2) is satisfied trivially for any $M$. 
In particular cases, though, condition (1) can still give strong results,
e.g.\ all Stiefel-Whitney classes of $\RP^{2^n-2}$ of degree up to $2^n-2$
are nonzero. 
\end{rem}

For an integer $s$ let $2^{R(s)}$ be the maximal power of $2$ which divides
$s$, and define $\ka(n) = \max \{ 0 < s < 2^{n-1} : s - R(s)< 2^{n-1} -n \}$.
By using Corollary~\ref{gam} (3) and following a similar argument
 to \cite{At}, we obtain the following:
\begin{prop}\label{alkgamma}
For $n \geq 4$, $\RP^{2^n-1}$ does not admit tame corank $1$ maps into
$\R^{2^n-1-k}$ for $k \leq {\ka(n)-2}$.
\end{prop}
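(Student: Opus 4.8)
The plan is to apply Corollary~\ref{gam}(3) and reduce the statement to a computation in $\rKO(\RP^{2^n-1})$, closely following the style of the immersion estimates in \cite{At}. Since the target $\R^{2^n-1-k}$ is stably parallelizable, a tame corank $1$ map $\RP^{2^n-1} \to \R^{2^n-1-k}$ would, by Corollary~\ref{gam}(3), force $\ga^i([T\RP^{2^n-1}] - [\ep^{2^n-1}]) = 0$ for all $i \geq k+2$. Thus it suffices to exhibit some $i \geq k+2$ with $\ga^i([T\RP^{2^n-1}] - [\ep^{2^n-1}]) \neq 0$; in fact I will identify the \emph{largest} index $i$ for which $\ga^i \neq 0$ and show that it equals $\ka(n)$, so that the map cannot exist as soon as $k+2 \leq \ka(n)$, i.e.\ for all $k \leq \ka(n)-2$ at once.

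First I would set up the $K$-theory. Write $N = 2^n-1$, let $L$ denote the canonical real line bundle over $\RP^N$, and put $x = [L] - [\ep^1] \in \rKO(\RP^N)$. Recall that $x$ generates the cyclic group $\rKO(\RP^N) \cong \Z/2^{2^{n-1}-1}$ and satisfies $x^2 = -2x$ (from $L \otimes L \cong \ep^1$), hence $x^j = (-2)^{j-1}x$ for $j \geq 1$; consequently $a\cdot x = 0$ exactly when $2^{2^{n-1}-1} \mid a$. From the standard splitting $T\RP^N \oplus \ep^1 \cong (N+1)L$ one obtains $[T\RP^N] - [\ep^N] = (N+1)x = 2^n x$.

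Next I would compute the $\ga$-classes of $2^n x$. Since $\ga_t$ is a homomorphism into the multiplicative group, and a direct computation from $\ga_t = \la_{t/(1-t)}$ together with $\la_t(x) = (1+[L]t)(1+t)^{-1}$ gives $\ga_t(x) = 1 + xt$, we get $\ga_t(2^n x) = (1+xt)^{2^n}$ and therefore $\ga^i(2^n x) = \binom{2^n}{i} x^i = \binom{2^n}{i}(-2)^{i-1} x$ for $i \geq 1$. Writing $v_2$ for the $2$-adic valuation and using $v_2\binom{2^n}{i} = n - R(i)$ for $0 < i < 2^n$ (which follows from $\binom{2^n}{i} = \frac{2^n}{i}\binom{2^n-1}{i-1}$ and the oddness of $\binom{2^n-1}{i-1}$), the class $\ga^i(2^n x)$ is nonzero exactly when $v_2\binom{2^n}{i} + (i-1) < 2^{n-1}-1$, that is, when $i - R(i) < 2^{n-1} - n$.

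Finally I would match this with the definition of $\ka(n)$. The nonvanishing condition $i - R(i) < 2^{n-1}-n$ is precisely the inequality defining the set in $\ka(n) = \max\{0 < s < 2^{n-1} : s - R(s) < 2^{n-1}-n\}$, so $\ka(n)$ is a valid index with $\ga^{\ka(n)}(2^n x) \neq 0$, and one checks that it is the largest: for $i \geq 2^{n-1}$ (still below $2^n$) one has $R(i) \leq n-1$, whence $i - R(i) \geq 2^{n-1} - n + 1 > 2^{n-1}-n$, so no such $i$ contributes. Hence whenever $k \leq \ka(n)-2$ we have $\ka(n) \geq k+2$ while $\ga^{\ka(n)}([T\RP^{2^n-1}] - [\ep^{2^n-1}]) \neq 0$, contradicting Corollary~\ref{gam}(3). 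The main delicate points I expect are recording the correct order $2^{2^{n-1}-1}$ of $\rKO(\RP^{2^n-1})$ and verifying that no index $i \geq 2^{n-1}$ makes $\ga^i$ nonzero, so that the maximal nonvanishing index genuinely coincides with $\ka(n)$ rather than merely being bounded by it.
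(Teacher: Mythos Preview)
Your proposal is correct and follows essentially the same route as the paper: both reduce to showing $\ga^i([T\RP^{2^n-1}]-[\ep^{2^n-1}]) = 2^{i-1}\binom{2^n}{i}x \neq 0$ in $\rKO(\RP^{2^n-1})\cong \Z/2^{2^{n-1}-1}$ for $i=\ka(n)$, and both obtain the nonvanishing criterion $i-R(i)<2^{n-1}-n$ from $v_2\binom{2^n}{i}=n-R(i)$. The only cosmetic differences are that the paper invokes Proposition~\ref{Kgamma} directly and derives $v_2\binom{2^n}{i}$ via Kummer's carry count, whereas you go through Corollary~\ref{gam}(3) and the identity $\binom{2^n}{i}=\tfrac{2^n}{i}\binom{2^n-1}{i-1}$; your explicit check that no $i\geq 2^{n-1}$ can satisfy the nonvanishing condition is a point the paper leaves implicit.
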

\begin{rem}
Obviously
$s_0= 2^{n-1}-2^{\min \{ r : r + 2^r > n \}}$ satisfies 
$s_0 +n -R(s_0) < 2^{n-1}$, hence $s_0 \leq \ka(n)$ and we obtain that
$\RP^{2^n-1}$ admits no tame corank $1$ map into
$\R^{2^{n-1} + 2^{\min \{ r : r + 2^r > n \}} +j}$
for $n \geq 4$ and $j \geq 1$. Also, since 
$\min \{ r : r + 2^r > n \} \leq \lceil \log_{2}{n} \rceil$,
the same conclusion holds in the case of the target
$\R^{2^{n-1} + 2^{\lceil \log_{2}{n} \rceil} +j}$ for $n \geq 4$ and $j \geq
1$. For example, there exists neither a fold map from $\RP^{31}$ to
$\R^{21+2j}$ for $0 \leq j \leq 5$ nor a tame corank $1$ map from $\RP^{31}$
to $\R^{22+2j}$ for $0 \leq j \leq 4$.
\end{rem}

\subsection{Cobordism class of the source manifold}

For $n \equiv 0 \mod{4}$, let $X^n$ be a closed  oriented
$n$-manifold such that it is null-cobordant as an unoriented manifold and its
only nonzero Pontryagin characteristic number is $p_1^{n/4}[X^n]>0$, which 
 is equal to the minimal even value attainable by manifolds with
these properties. We define the following linear subspaces of
$\mathfrak N_n$:
\begin{itemize}
\item for $n=2^a$ with $a \geq 2$, $\mathfrak A^1$ is the
$1$-dimensional space defined by the vanishing of $w_2^{n/2}+w_n$ as well as
all monomial Stiefel-Whitney numbers except $w_2^{n/2}$ and $w_n$. For example, the 
cobordism class of $(\CP^2)^{n/4}$ generates $\mathfrak A^1$.
\item for $n=2^{b+1}+2^b-1$ with $b \geq 1$, $\mathfrak B^1$ is the
$1$-dimensional space defined by the vanishing of
\begin{itemize}
\item all monomial Stiefel-Whitney numbers not of the form $w_{m_1} \cdots w_{m_{2^b}}$,
\item all monomial Stiefel-Whitney numbers containing $w_1$,
\item all pairwise sums of the rest of monomial Stiefel-Whitney numbers.
\end{itemize}
\item for $n=2^a+2^b-1$ with $a \geq b+2$ and $b \geq 1$, $\mathfrak C^2$ is
the two-dimensional space defined by the vanishing of
\begin{itemize}
\item all monomial Stiefel-Whitney numbers which are not either of the form
$$w_{m_1} \cdots w_{m_{2^b}}  \mbox{ \ \ or \ \ }  w_{m_1} \cdots w_{m_{2^{a-1}}},$$
\item all monomial Stiefel-Whitney numbers containing $w_1$,
\item all pairwise sums of Stiefel-Whitney numbers of the form
$w_{m_1} \cdots w_{m_{2^{a-1}}}$ with all $m_j \geq 2$, and all pairwise sums of
Stiefel-Whitney numbers of the form
$w_{m_1} \cdots w_{m_{2^{b}}}$ with all $m_j \geq 2$.
\end{itemize}
\end{itemize}

\begin{thm}\label{nullcob}
Let $n \geq 2$. Assume  $M$ is an oriented $n$-manifold admitting 
a fold map into a stably parallelizable $(n-1)$-manifold.
Then either $M$ is oriented null-cobordant or one of the following cases
occurs:
\begin{enumerate}[\rm (1)]
\item
$n \equiv 0 \mod{4}$, $n$ is not a power of $2$ and $M$ is oriented cobordant
to  $mX^n$ for some $m \in \Z$.
\item
$n=2^a$ for some $a \geq 2$ and either 
\begin{enumerate}[\rm (a)]
\item
$[M]$ is the nonzero element of $\mathfrak A^1$, or
\item
$M$ is oriented cobordant to  $mX^n$ for some $m \in \Z$.
\end{enumerate}
\item 
$n = 2^{b+1} + 2^b -1$ for some positive integer $b$ and $[M] \in \mathfrak
B^1$.
\item
$n = 2^a + 2^b -1$ for some positive integers $a$ and $b$, $a \geq b+2$, and
$[M] \in \mathfrak C^2$.
\end{enumerate}
\end{thm}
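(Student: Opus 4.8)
The plan is to specialize to the case $k=1$ and read off the allowed characteristic numbers of $M$, then match them against the Dold relations. Since the target has dimension $n-1$ we have $k=1$, which is odd, and stable parallelizability of $Q$ lets us take $K=0$ in Theorem~\ref{charclass}. For a fold map the relaxed hypothesis $r_j,s_j\geq k+2+K=3$ therefore applies, so any product $w_{r_1}(TM)\cdots w_{r_m}(TM)$ of Stiefel-Whitney classes of degrees $\geq 3$ depends only on the number $m$ of factors and on the total degree $\sum r_j$. In parallel, Theorem~\ref{szingtusk} gives $p_i^{\Q}(TM)=0$ for $i\geq 2$, so the only rational Pontryagin class that survives is $p_1$, and consequently the only Pontryagin number of $M$ that can be nonzero is $p_1^{n/4}[M]$ (which exists only when $4\mid n$). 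Finally, orientability gives $w_1(TM)=0$.

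First I would convert these into a normal form for the Stiefel-Whitney numbers of $M$. Because $w_1=0$, every monomial Stiefel-Whitney number has the shape $\langle w_2^{a}\,w_{j_1}\cdots w_{j_b},[M]\rangle$ with each $j_l\geq 3$ and $2a+\sum_l j_l=n$; by the equality above (this is Proposition~\ref{karosztalyok}) the factor $w_{j_1}\cdots w_{j_b}$ is the \emph{same} cohomology class for all admissible exponents $j_l$, so this number depends only on the pair $(a,b)$. Thus the Stiefel-Whitney numbers of $M$ form a small finite family indexed by factor-counts, and all pairwise differences of monomials with the same number $b$ of high-degree factors automatically vanish. This is exactly the type of condition appearing in the definitions of $\mathfrak A^1$, $\mathfrak B^1$ and $\mathfrak C^2$, where the surviving monomials are grouped by a fixed number of factors ($2^b$ or $2^{a-1}$) and all their pairwise sums are required to vanish.

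Next I would feed these coincidences into the Dold relations, the classical universal linear relations satisfied by the Stiefel-Whitney numbers of any closed manifold. The point is that the Dold relations link monomials with different numbers of $w_2$-factors (different values of $a$ above) and different total factor-counts, which Theorem~\ref{charclass} alone does not. Solving this combined linear system over $\Z_2$ is governed by the $2$-adic expansion of $n$ via Lucas' theorem on binomial coefficients modulo $2$: the surviving space of Stiefel-Whitney-number functionals is zero unless $n$ has a very special binary form. I expect the outcome to be the following. For $n=2^{a}+2^{b}-1$ with $a\geq b+2$, exactly the two families with $2^{b}$ and with $2^{a-1}$ factors survive, giving $\mathfrak C^2$; in the consecutive case $n=2^{b+1}+2^{b}-1$ these two families collapse into one, giving $\mathfrak B^1$; for $n=2^{a}$ the only survivors are $w_2^{n/2}$ and $w_n$, tied together by the single relation $w_2^{n/2}+w_n=0$ among Stiefel-Whitney numbers, giving $\mathfrak A^1$ (realized by $(\CP^2)^{n/4}$); and for every other $n$ each Stiefel-Whitney number of $M$ vanishes.

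Finally I would assemble the cobordism conclusion. By Wall's theorem the oriented cobordism class of $M$ is determined by its Stiefel-Whitney and Pontryagin numbers. When every Stiefel-Whitney number vanishes and $4\nmid n$, all characteristic numbers vanish and $M$ is oriented null-cobordant; when every Stiefel-Whitney number vanishes but $4\mid n$, only $p_1^{n/4}[M]$ remains, and integrality together with the minimality built into the definition of $X^n$ forces $M$ to be oriented cobordant to $mX^n$ for an integer $m$ — this yields case (1), and case (2b) when in addition $n=2^a$. In the remaining binary-special dimensions the residual Stiefel-Whitney data places $[M]$ in $\mathfrak A^1$, $\mathfrak B^1$ or $\mathfrak C^2$ as listed. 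The main obstacle is the third step: carrying out the explicit solution of the combined Dold/Theorem~\ref{charclass} system, i.e.\ showing that the mod-$2$ binomial arithmetic really does annihilate every functional except those attached to the factor-counts $2^b$ and $2^{a-1}$, and verifying that the surviving relations coincide precisely with the defining equations of $\mathfrak A^1$, $\mathfrak B^1$ and $\mathfrak C^2$.
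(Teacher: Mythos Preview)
Your outline matches the paper's strategy closely: combine the characteristic-number relations coming from the fold map with the Dold relations, solve the resulting linear system over $\Z_2$, and then handle the Pontryagin numbers separately using Theorem~\ref{szingtusk} and Wall's theorem. Two points are worth flagging.

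First, the paper's starting input is slightly stronger than the one you extract from Theorem~\ref{charclass}. Proposition~\ref{karosztalyok}(3) (which you cite but mis-paraphrase) says that for $k=1$, $\de=0$ any Stiefel-Whitney \emph{number} of $M$ containing no $w_1$ depends only on the total number of factors --- so $w_2^aw_{j_1}\cdots w_{j_b}[M]$ depends only on $a+b$, not on the pair $(a,b)$. This is what allows the paper to encode the relation ideal $\mathcal I$ as the kernel of the homomorphism $\varrho\co\Z_2[w_1,w_2,\dots]\to\Z_2[x,t]$, $w_1\mapsto 0$, $w_s\mapsto xt^{s-2}$, reducing the problem to computing $\im\varrho_n/\varrho(\mathcal D)$. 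Your weaker two-parameter normal form would still work in principle, but the bookkeeping becomes messier.

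Second, the step you call ``the main obstacle'' is indeed the entire substance of the proof: the paper devotes several pages (Lemma~\ref{binomlem}, Propositions~\ref{thm:power2} and \ref{thm:choice}, and the separate treatment of even and odd $n$) to computing $\varrho(\mathcal D)$ explicitly via Kummer/Lucas--type arguments on binomial coefficients, and showing that the quotient is zero unless $n=2^a$ or $n=2^a+2^b-1$, with the survivors being precisely the monomials of length $2^{a-1}$ and $2^b$. Your proposal correctly anticipates the answer but does not carry out this computation; as written it is a plan rather than a proof.
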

 Note that in the cases (1) and (2b) $M$ is unoriented
null-cobordant. Also, the case (2a) implies $w_n[M] \neq 0$ and can
be excluded if $n \neq 2,4,8$, see \cite{SaeSak} and use the fact that a
stably parallelizable manifold is almost parallelizable.

\par
   
\begin{cor}\label{osszefogl}
If $n$ is not of the form $2^a + 2^b - 1$ for some integers $a > b \geq 0$ and the
orientable $n$-manifold $M$ has an odd Pontryagin number or a nonzero
Stiefel-Whitney number, then $M$ has no fold map into any stably
parallelizable $(n-1)$-manifold.
\end{cor}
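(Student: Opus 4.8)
The plan is to read this off Theorem~\ref{nullcob} by contraposition. So I would assume that $M$ is an orientable $n$-manifold admitting a fold map into a stably parallelizable $(n-1)$-manifold and that $n$ is \emph{not} of the form $2^a+2^b-1$ with $a>b\geq 0$, and then show that every Pontryagin number of $M$ is even and every Stiefel--Whitney number of $M$ vanishes; the corollary is the contrapositive.

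The first step is to rule out cases (2), (3) and (4) of Theorem~\ref{nullcob} using the hypothesis on $n$. The key observation is that each of the exceptional dimensions there has the forbidden form: in case (2) we have $n=2^a=2^a+2^0-1$ (take $b=0$), in case (3) we have $n=2^{b+1}+2^b-1$ (take $a=b+1$), and in case (4) we have $n=2^a+2^b-1$ with $a\geq b+2$. Hence if $n$ is not of the form $2^a+2^b-1$, none of (2), (3), (4) can occur, so Theorem~\ref{nullcob} leaves only two possibilities: either $M$ is oriented null-cobordant, or case (1) holds, i.e.\ $n\equiv 0\bmod 4$, $n$ is not a power of $2$, and $M$ is oriented cobordant to $mX^n$ for some $m\in\Z$. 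I would note that case (1) is genuinely compatible with the hypothesis, since $2^a+2^b-1$ is a power of $2$ when $b=0$ and is odd when $b\geq 1$, so an $n$ divisible by $4$ that is not a power of $2$ is automatically not of the forbidden form.

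Next I would compute the characteristic numbers in each of the two surviving cases. If $M$ is oriented null-cobordant, then forgetting orientation it is unoriented null-cobordant, so $[M]=0$ in $\mathfrak N_n$ and all Stiefel--Whitney numbers vanish; moreover every Pontryagin number, being a homomorphism $\Omega^{SO}_n\to\Z$ that annihilates $[M]=0$, is zero and in particular even. In case (1) I would use that the unoriented reduction $\Omega^{SO}_n\to\mathfrak N_n$ is a homomorphism: since $X^n$ is by construction unoriented null-cobordant, its class vanishes in $\mathfrak N_n$, whence $[M]=m[X^n]=0$ there and all Stiefel--Whitney numbers of $M$ vanish. For the Pontryagin numbers, additivity of each Pontryagin number gives $p_\omega[M]=m\,p_\omega[X^n]$ for every partition $\omega$; the only nonzero Pontryagin number of $X^n$ is $p_1^{n/4}[X^n]$, which is even by construction, so every Pontryagin number of $M$ is an integer multiple of an even number, hence even.

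Combining the two cases, under the stated hypothesis on $n$ the existence of a fold map into a stably parallelizable $(n-1)$-manifold forces all Pontryagin numbers of $M$ to be even and all Stiefel--Whitney numbers of $M$ to vanish, which is exactly the contrapositive of the corollary. I do not expect a serious obstacle: the argument is essentially bookkeeping, and the only points deserving care are the elementary dimension count showing that cases (2)--(4) are precisely the dimensions of the form $2^a+2^b-1$ while case (1) survives, together with the two defining facts about $X^n$ that make case (1) harmless, namely that it is unoriented null-cobordant (killing Stiefel--Whitney numbers) and that its unique nonzero Pontryagin number $p_1^{n/4}[X^n]$ is \emph{even}.
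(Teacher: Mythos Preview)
Your proposal is correct and is precisely the argument the paper intends: the corollary is stated immediately after Theorem~\ref{nullcob} with the remark that in cases (1) and (2b) the manifold $M$ is unoriented null-cobordant, and the definition of $X^n$ (unoriented null-cobordant with only nonzero Pontryagin number $p_1^{n/4}[X^n]$ even) supplies exactly the facts you use for case~(1). Your bookkeeping that cases (2)--(4) cover exactly the dimensions $2^a+2^b-1$ with $a>b\geq 0$, while case~(1) survives but is harmless, is the content of the derivation.
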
 

\begin{rem}
If $M$ is a spin manifold, then Corollary~\ref{osszefogl} holds 
with the relaxed condition   $n \neq 2,4,8$, see Corollary~\ref{spin}.
\end{rem}

\begin{rem}
By Theorem~\ref{nullcob}
if an orientable  $(4m+1)$-manifold $M$ admits a fold map into a 
stably parallelizable $4m$-manifold, then
the de Rham invariant $w_2 w_{4m-1}[M]$ vanishes.
This may suggest further relations of fold maps to surgery theory, see \cite{An0}.
\end{rem}

\begin{thm}\label{nullcoborising}
Let $n \geq 2$. There exists a $1$-dimensional linear subspace
$\mathfrak D^1 \leq \mathfrak N_n$ such that if $M$ is a possibly non-orientable $n$-manifold
 admitting a tame corank $1$ map into a stably
parallelizable $(n-1)$-manifold, then 
$[M] \in \mathfrak D^1$ and
$w_1^n[M] = 1$  if $M$ is not null-cobordant.
\end{thm}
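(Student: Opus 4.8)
The plan is to reduce the assertion to a purely cobordism-theoretic statement about Stiefel--Whitney numbers and then to settle that statement with Dold relations. First I would feed the hypothesis into Corollary~\ref{gam}: the map is a \emph{tame} corank~$1$ map into a stably parallelizable $(n-1)$-manifold, so $k=1$, and part~(1) of that corollary yields $w_i(TM)=0$ for all $i\geq 3$; Proposition~\ref{Kgamma} moreover gives a stable splitting $TM\oplus\ep^{N}\cong\xi^{2}\oplus\ep^{\,n-2+N}$ with $\xi^{2}$ a genuine rank-$2$ bundle. Consequently every Stiefel--Whitney number of $M$ that contains a factor $w_j$ with $j\geq 3$ vanishes, and the only numbers that can survive are the $w_1^{a}w_2^{b}[M]$ with $a+2b=n$. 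In cobordism terms $[M]$ then lies in
\[
\mathfrak D^1:=\{\,x\in\mathfrak N_n : w_I[x]=0\ \text{for every monomial }w_I\text{ involving some }w_j,\ j\geq 3\,\}.
\]
Thus the whole theorem reduces to the claim that $\dim\mathfrak D^1\leq 1$ and that, on $\mathfrak D^1$, every $w_1^{a}w_2^{b}$-number is a fixed universal multiple of $w_1^{n}$.

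The heart of the argument is this last claim, which I would establish by combining the relations of Proposition~\ref{karosztalyok} with Dold relations. Writing $v=\sum_j v_j$ for the total Wu class, $v=\chi(Sq)(w)$ with $v_j=0$ for $2j>n$, the Wu relations $\langle Sq^j z,[x]\rangle=\langle v_j z,[x]\rangle$ (using the universal expansions $Sq^1w_1=w_1^2$, $Sq^1w_2=w_1w_2+w_3$, $Sq^2w_2=w_2^2$, and the Cartan formula) already force $w_1^{a}w_2^{b}[x]=0$ unless $b\equiv n\pmod 2$, cutting the list of candidate numbers roughly in half. The surviving numbers must then be tied to $w_1^{n}$: taking $z$ to range over the \emph{high} monomials in the Wu relations produces congruences expressing the leftover low numbers modulo the span of the high-number functionals, and since those functionals vanish on $\mathfrak D^1$ the congruences collapse each $w_1^{a}w_2^{b}[x]$ to $c_{a,b}\,w_1^{n}[x]$ for universal $c_{a,b}\in\Z_2$ (with $c_{n,0}=1$). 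Concretely I would verify the resulting corank-$\leq 1$ statement by evaluating the high-number functionals on an explicit set of generators of $\mathfrak N_n$ --- products of even real projective spaces $\RP^{2m}$, supplemented by Dold manifolds in the remaining degrees --- whose Stiefel--Whitney classes are computable, and reading off that their common kernel has dimension at most $1$.

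With the claim in hand the theorem is immediate. If $\mathfrak D^1\neq 0$, its nonzero class $x_0$ has some nonvanishing number, hence $c_{a,b}w_1^{n}[x_0]=1$ for some $(a,b)$, forcing $w_1^{n}[x_0]=1$; so a non-null-cobordant admissible $M$ satisfies $w_1^{n}[M]=1$, and $w_1^{n}$ detects the $1$-dimensional space $\mathfrak D^1$. In particular, if $M$ is orientable then $w_1(TM)=0$, whence $w_1^{n}[M]=0$ and $[M]=0$; this recovers the $k=1$ cases of Theorem~\ref{nullcob} and Corollary~\ref{osszefogl} as a special instance.

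The hard part will be the uniform-in-$n$ proof that the high-number functionals have corank at most $1$ with cokernel spanned by $w_1^{n}$. The Wu relations by themselves are \emph{not} enough: for instance when $n=6$ they leave $w_1^{2}w_2^{2}[M]$ formally undetermined, yet a direct evaluation on the generators $\RP^{6}$, $\RP^{4}\times\RP^{2}$, $(\RP^{2})^{3}$ shows that the three high-number vectors are linearly independent, so in fact $\mathfrak D^1=0$ in that degree. This shows that one genuinely needs the full force of Dold relations (equivalently, the structure of $\mathfrak N_*$ as a polynomial algebra on Dold-manifold generators) to link the residual low numbers to the vanishing high numbers. Carrying out this reduction so that precisely $w_1^{n}$ survives, and organizing the parity bookkeeping and the small-$n$ exceptions (e.g. $n=2$, and the degrees with $\mathfrak N_n=0$), is where the main technical effort lies.
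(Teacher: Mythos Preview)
Your setup is fine: Proposition~\ref{Kgamma} indeed gives $TM\oplus\ep^1\cong\zeta^2\oplus\ep^{n-1}$, so $w(TM)=1+w_1+w_2$ with $w_i:=w_i(\zeta)$, and the only monomial numbers in play are $w_1^{n-2m}w_2^{m}[M]$. (Proposition~\ref{karosztalyok} is irrelevant here --- it concerns cusp maps and the blowup $N_\Si$, neither of which enters this theorem.) The issue is what follows.

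You correctly diagnose that Wu relations alone do not close the gap, but your proposed remedy --- evaluating on explicit generators $\RP^{2m_1}\times\cdots$ and Dold manifolds --- is not a proof for general $n$; it is a verification scheme, and you acknowledge as much (``the hard part will be the uniform-in-$n$ proof\ldots''). The paper avoids this entirely by writing down one specific family of Dold relations and observing that it is already triangular. With $w=1+w_1+w_2$ and $Sq(w_1)=w_1(1+w_1)$, $Sq(w_2)=w_2(1+w_1+w_2)$, the Dold relation attached to the monomial $p=w_1^{a}$ (no $w_2$ factor) is
\[
R(a)=\Bigl[\tfrac{w_1^{a}(1+w_1)^{a}}{1+w_1+w_2}\Bigr]_{\deg=n}
     =\sum_{j=0}^{(n-a)/2}\binom{a-1-j}{\,n-a-2j\,}\,w_1^{\,n-2j}w_2^{\,j}.
\]
Taking $a=n-2m$ for $0<m\le n/2$, the top term is $\binom{n-3m-1}{0}w_1^{\,n-2m}w_2^{\,m}=w_1^{\,n-2m}w_2^{\,m}$, so $R(n-2m)$ expresses $w_1^{\,n-2m}w_2^{\,m}$ as a $\Z_2$-combination of monomials with strictly smaller $w_2$-exponent. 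Induction on $m$ then gives $w_1^{\,n-2m}w_2^{\,m}[M]=c_m\,w_1^{\,n}[M]$ with universal $c_m\in\Z_2$, uniformly in $n$. That single observation is the missing idea in your argument; once you have it, the rest of your outline (including the $w_1^n[M]=1$ conclusion for non-null-cobordant $M$) goes through exactly as you wrote.
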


\begin{prop}\label{nullcobnagykodimkonnyu}
Let $n>k \geq 5$ where $k$ is odd and not of the form $2^a - 1$ for some $a \geq 3$.
There exists a $1$-dimensional linear subspace $\mathfrak E^1 \leq
\mathfrak N_n$ such that if $M$ is an $n$-manifold with
$w_1(TM)=\cdots = w_k(TM) = 0$   
 admitting a fold
map into a stably parallelizable $(n-k)$-manifold, 
then $[M] \in \mathfrak E^1$. Additionally, if $[M] \in \mathfrak E^1$ and $M$ is not
null-cobordant, then $w_n[M] = 1$ is the only nonzero monomial
characteristic number of $M$.
\end{prop}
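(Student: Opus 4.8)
The plan is to prove the \emph{core claim} that under the hypotheses every monomial Stiefel-Whitney number of $M$ having at least two factors vanishes, so that the cobordism class $[M] \in \mathfrak N_n$ is determined by the single value $w_n(TM)[M] \in \Z_2$. Granting this, the Stiefel-Whitney numbers (hence $[M]$) can take only two values, and the admissible classes all lie on the line $\mathfrak E^1$ spanned by the class $e$ with $w_n[e]=1$ and all other monomial Stiefel-Whitney numbers zero; a non-null-cobordant representative then has $w_n[M]=1$ as its sole nonzero monomial characteristic number, which is the ``additionally'' assertion.

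First I would assemble the two inputs. Since $w_1(TM)=0$, the manifold $M$ is orientable, so Theorem~\ref{charclass} applies; as the target is stably parallelizable all its tangential Stiefel-Whitney classes vanish, giving $K=0$, and so relation~(\ref{piequ}) holds for all indices $\geq k+2$. Combined with $w_1(TM)=\cdots=w_k(TM)=0$, this says that the only monomials $w_{m_1}(TM)\cdots w_{m_r}(TM)$ that can contribute in degree $n$ have every $m_j \geq k+1$ with $\sum_j m_j = n$, and that (\ref{piequ}) lets me redistribute the weight freely among any factors of index $\geq k+2$ while keeping their number and total degree fixed. Isolating the exceptional index $k+1$, each such monomial equals, as a characteristic number, a normal form recorded by the multiplicity of $w_{k+1}$ together with the number and total degree of the remaining factors; in particular the rigid monomials of the type $w_{k+1}^{\,t}\,w_b$ (with a single factor $b\geq k+2$) form the list that the fold relations cannot simplify.

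The second input is the Dold relations, i.e.\ the universal identities $\langle \mathrm{Sq}^i x,[M]\rangle = \langle v_i\cup x,[M]\rangle$ from Wu's formula, where $v_i$ are the Wu classes. Connectivity propagates from the Stiefel-Whitney to the Wu classes: from $w=\mathrm{Sq}(v)$ one gets inductively $v_i=0$ for $1\leq i\leq k$, while $v_i=0$ for $2i>n$ for any closed $n$-manifold. Hence for $1\leq i\leq k$ and any $x$ of complementary degree the relation collapses to $\langle \mathrm{Sq}^i x,[M]\rangle=0$; expanding $\mathrm{Sq}^i$ of a monomial in the $w_j$ by the Cartan and Wu formulas turns each such identity into a linear relation among the degree-$n$ Stiefel-Whitney numbers. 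I would feed the normal forms of the previous step into these relations: the redistribution relations cut the independent multi-factor monomials down to the short controllable list above, and running the Dold relations over $i=1,\dots,k$ over-determines that list and forces all of them to zero, leaving $w_n[M]$ as the only surviving parameter.

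The main obstacle is exactly this solving step, and it is where the arithmetic hypotheses on $k$ enter. The coefficients produced when one expands $\mathrm{Sq}^i w_j$ are binomial coefficients mod $2$, and the system closes up — annihilating every multi-factor number — precisely when $k+1$ is not a power of $2$, that is $k\neq 2^a-1$; the numerical conditions $k\geq 5$ and $k$ odd secure that the window $k+1\leq i\leq \lfloor n/2\rfloor$ in which the Wu classes may be nonzero, together with the supply of relations $i=1,\dots,k$, is rich enough for the reduction to go through. In the excluded case $k=2^a-1$ the elimination fails to close, a potential extra cobordism class can survive, and one is reduced to Conjecture~\ref{nullcobnagykodim}; verifying the mod-$2$ binomial bookkeeping that distinguishes these two regimes is the delicate part of the argument.
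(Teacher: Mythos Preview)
Your overall target is correct: you want to show that every monomial Stiefel--Whitney number of $M$ with at least two factors vanishes, leaving only $w_n[M]$. Your reduction step, isolating the factors of index $k+1$ from those of index $\geq k+2$, is also on the right track. But you are missing the one observation that makes the proof immediate, and instead you propose an elaborate and unverified Dold-relation computation to do the work.

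The point you are missing is that under the hypotheses one has $w_{k+1}(TM)=0$ outright. This is the classical fact recorded as Problem~8-B in Milnor--Stasheff: if $w_1=\cdots=w_k=0$ then the smallest index $j$ with $w_j\neq 0$ must be a power of $2$. Since $k\geq 5$ and $k\neq 2^a-1$, the integer $k+1$ is not a power of $2$, so $w_{k+1}=0$. This is precisely where the arithmetic hypothesis on $k$ enters, and it enters in one stroke rather than through any ``delicate mod-$2$ binomial bookkeeping''. Once $w_{k+1}=0$, the paper invokes Corollary~\ref{spin} (which follows from Proposition~\ref{karosztalyok} and is slightly sharper than Theorem~\ref{charclass} in that it allows indices $\geq k+1$, not just $\geq k+2$): any multi-factor characteristic number with all indices $\geq k+1$ equals one of the form $w_{k+1}^{r-1}w_{n-(r-1)(k+1)}[M]$, and this is zero. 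That is the entire proof. Your proposed route through Wu classes and $\mathrm{Sq}^i$-relations is not wrong in spirit, but it is both unnecessary and left unfinished; the step you flag as ``the delicate part of the argument'' is in fact bypassed completely.
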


Again, \cite{SaeSak} implies that $M$ is null-cobordant if $n-k
\neq 1,3,7$.

\begin{prop}\label{nullcobnagykodimkonnyu2}
Let $n>k \geq 1$ and $k$ is odd. 
There exists a $1$-dimensional linear subspace $\mathfrak F^1 \leq
\mathfrak N_n$ such that if $M$ is an $n$-manifold which admits a
Morin map into a stably parallelizable $(n-k)$-manifold and $w_i(TM)=0$
for $i=1, \dots, k+1$, then $[M] \in \mathfrak F^1$. Additionally, if 
$[M] \in \mathfrak F^1$ and
$M$ is
not null-cobordant, then $w_n[M] = 1$ is the only nonzero monomial
characteristic number of $M$.
\end{prop}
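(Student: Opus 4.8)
The plan is to reduce the statement to a vanishing assertion about characteristic numbers: I will show that under the hypotheses every monomial Stiefel--Whitney number of $M$ with more than one factor vanishes. Once this is established, $[M]$ is determined by the single residue $w_n[M] \in \Z_2$, so all qualifying cobordism classes lie in a common subspace of dimension at most $1$; taking $\mathfrak F^1$ to be the span of the (at most one) resulting nonzero class then gives both the inclusion $[M] \in \mathfrak F^1$ and the final clause, since a non-null-cobordant such $M$ has $w_n[M] = 1$ as its only nonzero monomial number.

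The first ingredient is Theorem~\ref{charclass}. Note that the hypothesis $w_1(TM) = 0$ (contained in $w_i(TM)=0$ for $i \le k+1$) makes $M$ orientable, and $Q$ is orientable since it is stably parallelizable, so the theorem applies; moreover stable parallelizability lets me take $K = 0$. Hence for the given Morin map,
\[
w_{r_1}(TM)\cdots w_{r_m}(TM) = w_{s_1}(TM)\cdots w_{s_m}(TM)
\]
whenever all $r_j, s_j \ge k+3$ and $\sum_j r_j = \sum_j s_j$. Together with $w_i(TM)=0$ for $i \le k+1$, this says that the only classes entering a nonzero monomial number are $w_{k+2}$ (the single class below the threshold $k+3$) and the classes of degree $\ge k+3$, and that any product of the latter depends only on its number of factors and its total degree. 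Consequently every monomial Stiefel--Whitney number of degree $n$ can be put into a normal form $w_{k+2}^{a}\,w_{k+3}^{\,b}\,w_c[M]$ with $c \ge k+3$; this is the list of relations among characteristic numbers recorded in Proposition~\ref{karosztalyok}.

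Next I would bring in the Dold relations through the Wu formula $\mathrm{Sq}^i(x)[M] = v_i \cup x\,[M]$ for $x \in H^{n-i}(M;\Z_2)$. There are two sources of vanishing Wu classes: $v_i = 0$ for $2i > n$ by Wu's theorem, and $v_i = 0$ for $i \le k+1$, which follows because each such $v_i$ is a polynomial in $w_1,\dots,w_i$ with $i \le k+1$. Whenever $i > n/2$ or $i \le k+1$ and $w^\omega$ is a monomial of degree $n-i$, I therefore obtain $\mathrm{Sq}^i(w^\omega)[M] = 0$, and expanding the left-hand side by the Cartan formula and Wu's formula $\mathrm{Sq}^i(w_j) = \sum_l \binom{j-i+l-1}{l} w_{i-l}w_{j+l}$ converts this into an explicit $\Z_2$-linear relation among the degree-$n$ monomial numbers. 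The strategy is then to feed these Dold relations, together with the normal-form relations of the previous paragraph, into a linear-algebra argument over $\Z_2$ showing that the combined system kills every multi-factor number $w_{k+2}^{a}w_{k+3}^{\,b}w_c[M]$, leaving only $w_n[M]$.

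The main obstacle is precisely this last combinatorial step: proving that the Morin relations and the Dold relations have no common nonzero solution supported off $w_n[M]$. This is a question about the parities of the binomial coefficients appearing in Wu's formula, controlled by Lucas' theorem (equivalently, by counting carries in base $2$), and it is here that the hypothesis ``$k$ odd'' is used to make the relevant coefficients line up. I expect the cleanest route is to adapt the $\Z_2$ linear-algebra computation carried out for the fold case in Proposition~\ref{nullcobnagykodimkonnyu}, shifting the threshold from $k+2$ to $k+3$ and the vanishing range from $i \le k$ to $i \le k+1$, with the special class $w_{k+2}$ now playing the role that $w_{k+1}$ played there; the weaker hypotheses on $k$ here (no exclusion of $k=2^a-1$) should emerge from redoing the carry analysis in this shifted range. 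Once all multi-factor numbers are shown to vanish, the reduction of the first paragraph completes the proof.
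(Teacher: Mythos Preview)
You have left the decisive step open and are heading down an unnecessarily hard road. The single observation you are missing is that $w_{k+2}(TM)=0$: since $k$ is odd, $k+2$ is odd and greater than $1$, hence not a power of $2$, and by \cite[Problem~8-B]{MiSta} the first nonvanishing Stiefel--Whitney class of any bundle occurs in a degree that is a power of $2$; together with the hypothesis $w_1=\cdots=w_{k+1}=0$ this forces $w_{k+2}=0$. (This is precisely why no restriction $k\ne 2^a-1$ is needed here.) The second point is that the relevant input is Proposition~\ref{karosztalyok}, not Theorem~\ref{charclass}: after perturbing the Morin map to a cusp map \cite{Sad}, part (3) with $\delta=1$ and $Q$ stably parallelizable says that a characteristic number $w_I[M]$ whose factors all have degree $\ge k+2$ depends only on $|I|$. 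The threshold is $k+2$, not $k+3$; your normal form $w_{k+2}^a w_{k+3}^{\,b}w_c$ understates what the proposition gives.

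With these two facts the proof is immediate, exactly following the proof of Proposition~\ref{nullcobnagykodimkonnyu}: for $|I|=r\ge 2$, either some entry of $I$ is $\le k+1$ and $w_I[M]=0$ by hypothesis, or all entries are $\ge k+2$ and then $w_I[M]=w_{k+2}^{\,r-1}w_{n-(r-1)(k+2)}[M]=0$ since $w_{k+2}=0$. Only $w_n[M]$ can survive, and no Dold relations enter. Note also that the proof of Proposition~\ref{nullcobnagykodimkonnyu} you propose to adapt does \emph{not} itself use any Dold-relation linear algebra; that computation belongs to the proof of Theorem~\ref{nullcob}. The ``main obstacle'' you identify is therefore absent from the argument you should be mimicking, and the detour through Wu classes and $\mathrm{Sq}^i$-relations is unnecessary.
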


 As before, the case of $w_n[M] \neq 0$ is excluded if $n -k \neq 5, 6$ or $n-k \geq 9$, see
\cite{Sad, SaeSak}.

\begin{rem}
By Theorem~\ref{charclass}
we can make analogous statements to the above  in the case of 
 not stably parallelizable $Q$ as well.
\end{rem}

\par

Numerical calculations similar to those of the proof of Theorem~\ref{nullcob}
suggest the following conjecture:
\begin{conj}\label{nullcobnagykodim}
Let $n>k \geq 2$ and $k = 2^a-1$, where $a \geq 2$. 
There exists a $1$-dimensional linear
subspace $\mathfrak G^1 \leq \mathfrak N_n$ such that if an
$n$-manifold $M$ with $w_i(TM)=0$ for $i=1, \ldots, k$ admits a fold map
into a stably parallelizable $(n-k)$-manifold, then we have one of the following
cases:
\begin{enumerate}[\rm (1)]
\item
$n = 2^s$ or $n=2^s +1$ with $s \geq a+1$, and $[M] \in \mathfrak G^1$.
\item
$M$ is null-cobordant.
\end{enumerate}
\end{conj}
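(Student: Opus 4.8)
The plan is to follow the structure of the proof of Theorem~\ref{nullcob} and of Propositions~\ref{nullcobnagykodimkonnyu} and \ref{nullcobnagykodimkonnyu2}, now specialized to the excluded case $k=2^a-1$. Since $Q$ is stably parallelizable we may take $K=0$ in Theorem~\ref{charclass}, so the fold-map relations give $w_{r_1}(TM)\cdots w_{r_m}(TM)=w_{s_1}(TM)\cdots w_{s_m}(TM)$ whenever all $r_j,s_j\ge k+2$ and $\sum_j r_j=\sum_j s_j$. Together with the hypothesis $w_i(TM)=0$ for $i=1,\dots,k$, the only Stiefel-Whitney class of $M$ of degree at most $k+1=2^a$ that can survive is $w_{k+1}(TM)$, while among classes of degree $\ge k+2$ only the total degree and the number of factors matter. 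First I would use these two facts to produce an explicit finite spanning set for the Stiefel-Whitney numbers compatible with such a fold map.

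Concretely, any monomial number $w_{i_1}(TM)\cdots w_{i_t}(TM)[M]$ vanishes unless every $i_l\ge k+1$; separating the factors equal to $w_{k+1}$ from those of degree $\ge k+2$ and applying the product relations to the latter puts every surviving number into the normal form $w_{k+1}^{\,j}\,\mu_{D,p}[M]$, where $\mu_{D,p}$ is a fixed representative monomial of total degree $D$ with $p$ factors each of degree $\ge k+2$, and $j(k+1)+D=n$. The finitely many admissible triples $(j,D,p)$ then span a finite-dimensional space $W_n$ of candidate characteristic-number functionals.

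The decisive step is to intersect $W_n$ with the functionals actually realized by closed manifolds, that is, to impose the Dold relations (the same combination of product relations with Dold relations used throughout \S3). Evaluating the Dold relations on the normal forms $w_{k+1}^{\,j}\mu_{D,p}$ yields a linear system over $\Z_2$ whose coefficients are binomial coefficients modulo $2$; I would solve it to show that the surviving solution space is at most one-dimensional, and is nonzero exactly when $n=2^s$ or $n=2^s+1$ with $s\ge a+1$. The generator of $\mathfrak G^1$ should be the analogue of the class $(\CP^2)^{n/4}$ spanning $\mathfrak A^1$: in the even case $n=2^s$ it is detected by $w_{k+1}^{\,n/(k+1)}[M]=w_n[M]$, and in the odd case $n=2^s+1$ by a single de Rham-type product $w_{k+1}^{\,j}w_{k+2}[M]$ with $j=(n-k-2)/(k+1)$. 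In every other dimension the combined system has only the trivial solution, so all Stiefel-Whitney numbers of $M$ vanish and $M$ is null-cobordant.

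The main obstacle, and the reason the statement is still a conjecture, is carrying out this final rank computation \emph{uniformly} in $n$. Precisely because $k=2^a-1$ makes the critical degree $k+1=2^a$ a power of $2$, the binomial coefficients entering the Dold relations degenerate under Lucas' theorem, and the clean cancellation that forces null-cobordism when $k\neq 2^a-1$ (as in Proposition~\ref{nullcobnagykodimkonnyu}) breaks down. One is therefore reduced to proving that the $\Z_2$-rank of the combined product-and-Dold system drops by exactly one on the arithmetic set $\{2^s,\,2^s+1:s\ge a+1\}$ and is full otherwise, for all $n$ at once. The verification for $n\le 1200$ and $3\le k\le 1023$ makes this very plausible, but a general proof seems to require an exact closed-form evaluation of that rank, which is the heart of the difficulty.
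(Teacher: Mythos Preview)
The paper does not prove this statement: it is explicitly a \emph{conjecture}, supported only by computer verification for $n\le 1200$ and $3\le k\le 1023$, so there is no proof in the paper to compare your proposal against.

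That said, your outline is exactly the program the paper's own methods point to---combine the product relations of Proposition~\ref{karosztalyok} and Corollary~\ref{spin} with the Dold relations, just as in the proof of Theorem~\ref{nullcob}---and you have correctly located the obstruction. The argument of Proposition~\ref{nullcobnagykodimkonnyu} collapses all monomial numbers to $w_{k+1}^l w_{n-(k+1)l}[M]$ and then kills them via $w_{k+1}(TM)=0$, which follows from \cite[Problem~8-B]{MiSta} precisely because $k+1$ is \emph{not} a power of $2$ there. When $k+1=2^a$ this vanishing is no longer forced, so the whole family $\{w_{k+1}^l w_{n-(k+1)l}[M]\}_l$ survives and must be killed by the Dold relations alone. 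Your identification of the expected generator of $\mathfrak G^1$ for $n=2^s$ as the class with $w_{k+1}^{n/(k+1)}[M]=w_n[M]$ is the direct analogue of the subspace $\mathfrak A^1$ in Theorem~\ref{nullcob}, and the odd case $n=2^s+1$ is the analogue of the de~Rham--type numbers there.

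In short: your proposal is not a proof and does not claim to be one, but it is the right plan, it matches the paper's own heuristic, and it names the genuine gap---a uniform $\Z_2$-rank computation for the Dold system on the normal forms $w_{k+1}^l w_{n-(k+1)l}$---that remains open.
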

 We verified this conjecture  for $n \leq 1200$, $3 \leq k
\leq 1023$ with the help of a computer.

\section{Perturbing the blowup of a singular map}

\begin{proof}[Proof of Theorem~\ref{perturb}]
Let $\nu$ denote the  
$1$-dimensional normal bundle of the immersion $$f|_
{\Si_{(k+1)/2}^{k+1,0}} \co \Si_{(k+1)/2}^{k+1,0} \to Q.$$ 
We identify the normal bundle $\eta$
with a tubular neighborhood of $\Si_{(k+1)/2}^{k+1,0}$ in $M$ so
 that $f$ restricted to  $\eta$ is a
 composition of 
 \begin{enumerate}[(1)]
 \item
 a
 (nonlinear) bundle map $\iota \co \eta \to \nu$  having the form
$$
 (x_1,\ldots, x_{k+1}) \mapsto \sum_{i=1}^{(k+1)/2} x_i^2- \sum_{i=(k+1)/2+1}^{k+1}
x_i^2
$$
on the unit disk of each fiber of $\eta$ in suitable local coordinates
with
\item
an immersion $\va \co \nu \to Q$.
\end{enumerate}
 Under this
identification, the points of $\bls \eta M$ in a neighborhood of
$\pi^{-1}(\Si_{(k+1)/2}^{k+1,0})$ are identified with the sets of pairs 
$[(v, \tilde v)] := \{(v,\tilde v), (v, - \tilde v)\}$ where $v$ and
$\tilde v$ are parallel vectors in the same fiber  of $\eta$ and
$\tilde v$ has length $1$.
Note that $\bls \eta f ([(v,  \tilde v)]) = \va \circ \iota(v)$
under these identifications.

\par

We define the perturbed map $\Theta \co \bls \eta M \to Q$ to agree with $\bls \eta f$
outside 
 the $\pi$-preimage of
the unit disk bundle of $\eta$ and define $\Theta$ by the  formula 
\begin{equation}\label{epsilon}
\Theta\big([(v,\tilde v)]\big)=\va \left ( \iota(v)+\ep(p) \omega(\Vert v \Vert) \iota(\tilde v) \right )
\end{equation}
within the $\pi$-preimage of
this disk bundle of $\eta$.
Here $\omega \co \R \to [0,1]$ is a bump function which is equal to $1$
around $0$ and $0$ around $1$, $p \in \Si_{(k+1)/2}^{k+1,0}$,
$v$ and $\tilde v$ are in the fiber $\eta_p$ of $\eta$ over $p$,
and $\ep(p)
 = \ep>0$ is a small real number we will choose later.
Note that $\Theta$ is well-defined since 
$\iota(\tilde v) = \iota(-\tilde v)$.

\par

Clearly the differential $d\Theta$ has rank at least $n-k-1$ outside $\pi^{-1}(\eta)$.
From (\ref{epsilon}) it is easy to see that
$d\Theta$ has rank at least $n-k-1$ on $\pi^{-1}(\eta)$ as well
since for any small curve $\al \co p \mapsto [(v, \tilde v)]_p \in \pi^{-1}(\eta_p)$
going in the fibers $\pi^{-1}(\eta_p)$ of $\pi^{-1}(\eta)$, $p\in \Si_{(k+1)/2}^{k+1,0}$,
where $v, \tilde v$ are fixed, we have that $\Th(\al(p))$ is an immersion.
Hence $\Th$ is a corank $1$ map.
To get
the singular set of $\Theta|_{\pi^{-1}(\eta)}$,
 we first take a  curve $\gamma(t) = [(t\tilde v,\tilde v)]$ in
the blowup $\pi^{-1}( \eta_p)$ of a single fiber $\eta_p$ in $\bls \eta M$, $p \in \Si_{(k+1)/2}^{k+1,0}$.
The composite map
$$
\Theta \circ \gamma (t) =\va \left  ( \iota(t\tilde v)+\ep\omega(t) \iota(\tilde v) \right ) =
\va \left ( (t^2+\ep\omega(t)) \iota(\tilde v) \right )
$$
has a single critical point at $t=0$ if  $\iota (\tilde v) \neq 0$ and $\ep$ is small enough.
Taking the curve $\de(s) = [(t_0 \tilde v_s,\tilde v_s)]$ with
a  fixed $t_0$,
$\iota(\tilde v_0) =0$
so that it intersects  $\{ [(v, \tilde v)] : \iota(\tilde v) = 0 \}$
transversally the composite map 
$$\Theta \circ \de (s) = \va ( \iota( t_0 \tilde v_s ) + \ep \om(t_0) \iota( \tilde v_s) ) =
\va \left ( (t_0^2 + \ep \om(t_0)) \iota(\tilde v_s) \right )
$$
has nonzero derivative at $s=0$.
Hence the singular points of $\Theta |_{\pi^{-1}( \eta_p)}$
are contained in $\pi^{-1}(p)$ and
the singular points of $\Th |_{\pi^{-1}( \eta)}$ are contained in $\pi^{-1}(\Si_{(k+1)/2}^{k+1,0})$.

Clearly a critical point of $\Theta |_{\pi^{-1}( \eta_p)}$ is 
a critical point of $\Theta |_{\pi^{-1}( p)}$.
In the following, we show that
at the critical points of 
 $\Theta |_{\pi^{-1}( p)}$
the composite  map $\Theta \circ \gamma (t)$, where
$\ga(t)= [(t\tilde v,\tilde v)]$, has a critical point  for $t=0$.
Hence any critical point of 
$\Theta |_{\pi^{-1}( p)}$ is a critical point of $\Theta |_{\pi^{-1}( \eta_p)}$.
The choice of coordinates $x_1,\dots,x_{k+1}$
on $\eta_p$ identifies $\pi^{-1}(p)$ with the projective space $\RP^{k}$,
and the restriction $\Theta|_{\pi^{-1}(p)}$ is equal to
$$
\Th|_{\pi^{-1}(p)} \co [x_1:\dots:x_{k+1}]  \mapsto
  \va \left( \ep \iota \left( \frac{(x_1,\dots,x_{k+1})}{\Vert
(x_1,\dots,x_{k+1}) \Vert} \right) \right).
$$

This map is Morse-Bott and has critical points along two copies of
$\RP^{(k+1)/2-1}$, which are  $\{ [x_1:\cdots:x_{(k+1)/2}:0:\cdots:0] \in
\RP^{k} \}$ and  $\{
[0:\cdots:0:x_{(k+1)/2+1}:\cdots:x_{k+1}] \in \RP^{k} \}$, and it is easy to see that 
both critical loci have
index $(0,(k+1)/2-1)$.
Therefore 
$\Th|_{\pi^{-1}(\eta_p)}$ is a Morse-Bott map with indices $(1,(k+1)/2-1)$ and
with this two copies of $\RP^{(k+1)/2-1}$
as singular set. Hence $\Th$ is a Morse-Bott map with the corresponding indices.
We also have that the singular set of $\Th|_{\pi^{-1}(\eta)}$
is a fiber bundle with  the singular sets of $\Th|_{\pi^{-1}(\eta_p)}$,
$p \in \Si_{(k+1)/2}^{k+1,0}$, as fibers.
It is easy to see that 
the $(n-k-1)$-manifold
$q_{\sim} (\Si_{(1,(k+1)/2-1)} \cap \pi^{-1}(\eta))$
has an embedding 
into the sphere bundle of $\nu$ given by the perturbation.
Furthermore, $q_{\sim} (\Si_{(1,(k+1)/2-1)} \cap \pi^{-1}(\eta))$
is a double covering of $\Si_{(k+1)/2}^{k+1,0}$ given by this embedding,
and it  is immersed 
with a trivial normal bundle $\tilde \nu$
into
the tubular neighborhood of 
$f ({\Si_{(k+1)/2}^{k+1,0}})$.
Moreover there is a natural trivialization of $\tilde \nu$
corresponding to the indices of the singular set of 
$\Th|_{\pi^{-1}(p)}$,
$p \in \Si_{(k+1)/2}^{k+1,0}$.
Thus  the perturbation $\Th$ satisfies the requirements of the theorem.
Hence $\Th$ is a tame corank $1$ map and applying Proposition~\ref{Kgamma} 
finishes the proof.
\end{proof}

\begin{rem}
The  double covering of $\Si_{(k+1)/2}^{k+1,0}$ defined by 
$\Th(\Si_{(1,(k+1)/2-1)} \cap \pi^{-1}(\eta))$
 is trivial if and only if
$\nu$ is trivial. 
\end{rem}

\begin{rem}
If $k=1$, then $\Th$ is obviously a fold map.
For example, for a Morse function $f \co S^2 \to \R$ with three definite and
one indefinite critical points, $\bls \eta S^2 = S^2 \# \RP^2  = \RP^2$ and
$\Th$ is a Morse function with three definite and two indefinite critical
points. It can be seen that $\Th$ has two singular fibers containing
indefinite critical points and exactly one of them has non-orientable
neighborhood.
\end{rem}

\begin{lem}\label{thm:LS}
Let $X$ be a manifold and $l$ be a line bundle over $X$. Assume that there
is an open covering $X_0 \cup X_1 = X$ such that the bundle $l$ is trivial
over both $X_0$ and $X_1$. Then there exists an epimorphism $\varepsilon^2_X \to
l$.
\end{lem}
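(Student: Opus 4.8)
The plan is to reformulate the existence of an epimorphism $\ep^2_X \to l$ in terms of sections of $l$. A bundle map $\ep^2_X = \ep^1_X \oplus \ep^1_X \to l$ amounts precisely to a choice of two global sections $s_0, s_1$ of $l$, with the map sending a fiber vector $(a,b) \in \R^2$ to $a\, s_0 + b\, s_1$. Since the fibers of $l$ are one-dimensional, this map is a fiberwise epimorphism exactly when $s_0$ and $s_1$ have no common zero, i.e.\ when at every point $x \in X$ at least one of $s_0(x), s_1(x)$ is nonzero. So it suffices to produce two sections of $l$ that never vanish simultaneously.

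To build these, first I would use the hypothesis: since $l|_{X_i}$ is trivial for $i=0,1$, choose a nowhere-zero section $\sigma_i$ of $l$ over $X_i$. Then take a smooth partition of unity $\{\rho_0, \rho_1\}$ subordinate to the cover $\{X_0, X_1\}$, so that $\mathrm{supp}\,\rho_i \subset X_i$ and $\rho_0 + \rho_1 \equiv 1$. Define global sections by $s_i = \rho_i \sigma_i$, extended by zero outside $X_i$; each $s_i$ is smooth on all of $X$ because $\rho_i$ vanishes near the boundary of $X_i$, where $\sigma_i$ need not be defined.

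Finally I would verify the no-common-zero condition. At any $x \in X$, the identity $\rho_0(x) + \rho_1(x) = 1$ forces $\rho_i(x) > 0$ for some $i$; then $x \in \mathrm{supp}\,\rho_i \subset X_i$, so $\sigma_i(x) \neq 0$ and hence $s_i(x) = \rho_i(x)\sigma_i(x) \neq 0$. Thus $s_0$ and $s_1$ never vanish simultaneously, and the associated map $\ep^2_X \to l$ is the desired epimorphism.

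The main point requiring care is purely the bookkeeping: checking that each $s_i$ extends smoothly by zero across $X$, and that the partition-of-unity relation delivers non-simultaneous vanishing. There is no genuine obstacle here, as the one-dimensionality of $l$ reduces the epimorphism condition to the elementary requirement that one of the two sections be nonzero at each point, which the identity $\rho_0 + \rho_1 \equiv 1$ guarantees.
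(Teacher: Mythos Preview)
Your proof is correct and is essentially the same as the paper's: both combine local trivializations of $l$ over $X_0$ and $X_1$ with bump functions supported in those sets. The only cosmetic difference is that the paper runs the construction dually---it builds a fiberwise monomorphism $l \to \ep^2_X$ by $v \mapsto (\lambda_0(x)f_0(v),\lambda_1(x)f_1(v))$ and then takes an orthogonal projection---whereas you produce the epimorphism directly as a pair of sections of $l$ without a common zero.
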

\begin{proof}
Let $f_i \co l|_{X_i} \to \R$ be fiberwise linear isomorphisms which
trivialize $l$ over $X_i$ for $i=0,1$. Since $X_0$ and $X_1$ are open, we
can choose continuous functions $\lambda_0, \lambda_1 \co X \to [0,1]$ such
that $\lambda_i^{-1}(0) = X - X_i$ for $i=0,1$. Define the fiberwise linear
function $f \co l \to \R^2$ by the formula
$$
f(v) = (\lambda_0(x) f_0(v), \lambda_1(x) f_1(v))
$$
for each vector $v \in l$ over the point $x \in X$. This definition makes
sense since $\lambda_i(x)=0$ whenever $f_i(x)$ is not defined. Over $X_i$,
the map $f$ composed with the projection onto the $i$-th coordinate is a
nonzero rescaling of $f_i$, $i = 0, 1$, thus $f$ is injective and we can identify $l$ with
a subbundle of $\varepsilon^2_X$. Taking a Riemannian metric on
$\varepsilon^2_X$ allows us to consider the orthogonal projection
$\varepsilon^2_X \to l$, which is an epimorphism.
\end{proof}

\begin{proof}[Proof of Theorem~\ref{perkov}]
Suppose $f \co M^n \to Q^{n-k}$ is a Morin map.
By \cite{Che} the cokernel bundle of $df \co
TM \to f^*TQ$ is trivial over a neighborhood of $\Si^{k+1,1}$ in $\Si$.
Note that  $\coker df$ has  natural orientations over the 
 fold singularities of index not equal to $(k+1)/2$,
and these orientations agree when two fold singular sets are attached 
to each other
along $\Si^{k+1,1}$. 
Let $\Si_{\geq 0}$ denote 
the complement of a small regular neighborhood of $\Si^{k+1,1} \cup \bigcup_{\la \neq
(k+1)/2} \Si^{k+1,0}_\la$ in $\Si$. Then
$\coker df$ is trivial on $\Si - \Si_{\geq 0}$.
If $\Si_{\geq 0} = \emptyset$, then $f$ is a tame corank $1$ map and
by Proposition~\ref{Kgamma} we obtain the statement of the theorem.
Assume that $\Si_{\geq 0} \neq \emptyset$.

\par

Take the blowup $\bls \csi f \co \bls \csi M \to Q$. 
Define a smooth non-negative function
$\Delta$ on ${\Si_{\geq 0}}$ such that 
it 
vanishes on
${\Si_{= 0}}$, where $\Si_{= 0}$ is a
 small compact regular neighborhood  of $\del \Si_{\geq 0}$, and takes  small positive values on
 ${\Si_{> 0}}$, where 
  $\Si_{>0} = \Si_{\geq 0} - \Si_{=0}$.
Then, perturb the map $\bls \csi f \co \bls \csi M \to Q$
in $\pi^{-1}(\eta|_{\Si_{\geq 0}})$ in the same way as we did in Theorem~\ref{perturb}
but replace $\ep$ in \eqref{epsilon} by the value of the function $\Delta$.
Denote the resulting map by $\widetilde {\bls \csi f}$.
Note that 
on $\bls \csi M - \pi^{-1}(\eta|_{\Si-\Si_{>0}})$ the restrictions of
$\widetilde {\bls \csi f}$ and $\bls \csi f$
coincide.
Denote by $\tilde \Si_{>0}$ the singular set of
 $\widetilde {\bls \csi f}|_{\pi^{-1}(\eta|_{\Si_{>0}})}$.
The singular set of $\widetilde {\bls \csi f}|_{\pi^{-1}(\csi|_{\Si - \Si_{>0}})}$
is equal to $\pi^{-1}(\Si - \Si_{>0})$.

Since $d \bls \csi f (T\bls \csi M) =df d\pi (T\bls \csi M) =  df(TM)$ and
$f^*TQ / f^* df(TM)$ is trivial on $\Si - \Si_{>0}$, clearly
$\coker d {\bls \csi f} = \pi^*f^*(TQ) / \pi^* f^* d\bls \csi f(T\bls \csi M)$ is also trivial 
on $\pi^{-1}(\Si - \Si_{>0})$.
Hence so is $\coker d \widetilde{\bls \csi f}$. Moreover
  $d \widetilde {\bls \csi f}$
 also has trivial cokernel on  $\tilde \Si_{>0}$
  by 
 Theorem~\ref{perturb}.

Apply Lemma \ref{thm:LS} to $l = \coker d\widetilde {\bls \csi f}$ over 
$X = \tilde \Si_{>0} \cup \pi^{-1}(\Si - \Si_{>0})$
with the covering consisting of $X_0 = \tilde \Si_{>0}$ and $X_1$ 
being a small neighborhood of $\pi^{-1}(\Si - \Si_{>0})$. 
The argument above ensures that $l$ is indeed trivial when
restricted to either $X_0$ or $X_1$
since $X_1$ is a deformation retract of $\pi^{-1}(\Si - \Si_{>0})$. Therefore there exists a fiberwise
epimorphism $\sigma : \varepsilon^2_X \to \coker d\widetilde {\bls \csi f} |_X$. 
Compose $\sigma$ with the standard embedding
$\coker d\widetilde {\bls \csi f}|_X \to (\widetilde {\bls \csi f})^*TQ|_X$ and then
extend this composite map 
to all of ${\bls \csi M}$ as a linear bundle map $\tilde\sigma :
\varepsilon^2_{ {\bls \csi M}} \to (\widetilde {\bls \csi f})^*TQ$
by scaling it with a bump function
concentrated on a small neighborhood of $X$. Combining $d\widetilde {\bls \csi f}$ with
$\tilde\sigma$ we get a bundle map
$$
d\widetilde {\bls \csi f}+\tilde\sigma : T{\bls \csi M} \oplus \varepsilon^2 \to (\widetilde {\bls \csi f})^*TQ
$$
which is obviously surjective both on $\bls \csi M - X$ and $X$.
This completes the proof.
\end{proof}

\section{Computing the characteristic classes of the source manifold}

Let $\ga$ denote the line bundle over $\Si$ defined by the condition that
$w_1(\ga)$ is  Poincar\'e dual to the class represented by
$\Si^{k+1,1}$. We relate $f^* df (TM)|_\Si$ to $T\Si$ by the following

\begin{prop} 
For a cusp map $f$, we have
$T\Si  \+ \ga \cong f^*df(TM)|_\Si \+ \ep^1$.
\end{prop}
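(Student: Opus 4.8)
The plan is to extract the isomorphism from the single bundle homomorphism
\[
\phi:=df|_{T\Si}\co T\Si\lra f^*df(TM)|_\Si ,
\]
the restriction of $df$ to the tangent directions of $\Si$, corestricted to its image subbundle inside $f^*TQ|_\Si$; both bundles have rank $n-k-1$. First I would check that $\phi$ is fibrewise an isomorphism over the open fold stratum $\Si\setminus\Si^{k+1,1}$ and drops rank by one over the cusp locus $Z:=\Si^{k+1,1}$. This is exactly the statement that the singular set of $f|_\Si$ is its own $\Sigma^1$-set, which for a cusp map is $Z$ (with no $\Sigma^{k+1,1,1}$), so that $f|_\Si$ is an immersion off $Z$ and has a one-dimensional kernel and cokernel along $Z$.

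Next I would pin down the two line bundles $\ker\phi$ and $\coker\phi$ over $Z$ from the cusp normal form $(x,y,z)\mapsto(x^3+z_1x\pm y_1^2\pm\cdots\pm y_k^2,\,z_1,\dots,z_{n-k-1})$. Here $\Si=\{y=0,\ z_1=-3x^2\}$, the cusp locus is $Z=\{x=0\}\cap\Si$, and a direct computation shows the kernel direction of $\phi$ over $Z$ is the $\del_x$-direction, i.e.\ the normal direction of $Z$ in $\Si$; thus $\ker\phi\cong\nu_Z\cong\ga|_Z$, the last step being the standard fact that the line bundle $\PD$-dual to a hypersurface restricts to its normal bundle. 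The intrinsic second derivative of this fold-type degeneration gives a canonical isomorphism $\coker\phi\cong\ker\phi\otimes\nu_Z\cong\nu_Z^{\otimes2}$, which is trivial since the square of a real line bundle is trivial. As a consistency check, $\det\phi$ has a simple zero along $Z$, so $w_1\big(f^*df(TM)|_\Si\big)=w_1(T\Si)+w_1(\ga)$, which is the determinant of the asserted splitting.

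I would then assemble the isomorphism globally by a block homomorphism
\[
\Phi=\begin{pmatrix}\phi&\psi\\ \be&\de\end{pmatrix}\co T\Si\oplus\ga\lra f^*df(TM)|_\Si\oplus\ep^1 ,
\]
in which $\psi$ carries the adjoined line bundle onto a complement of $\im\phi$ over $Z$ and $\be$ reads off the kernel direction. Away from $Z$ the class $\PD[Z]$ restricts to zero, so $\ga$ is trivial there, the two extra line summands agree, and one keeps the Schur complement $\de-\be\phi^{-1}\psi$ nowhere zero; the fibrewise data prescribed over $Z$ then extends across a tubular neighbourhood of $Z$ exactly as in Lemma~\ref{thm:LS}. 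Over $Z$ itself the whole construction reduces, after eliminating the $\phi$-invertible block, to producing an isomorphism of the rank-two bundles $\ker\phi\oplus\ga|_Z$ and $\coker\phi\oplus\ep^1$, into which the identifications of the previous paragraph feed directly.

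The main obstacle is precisely this last matching over $Z$: the isomorphism types of $\ker\phi$ and $\coker\phi$ dictate which of $\ga$ and $\ep^1$ must be adjoined to which side, so that the normal-bundle computation $\ker\phi\cong\ga|_Z$, $\coker\phi\cong\nu_Z^{\otimes2}\cong\ep^1$ is exactly what makes the bookkeeping close up and is the delicate point of the argument; I would verify it directly from the cusp normal form. Finally, upgrading a stable identity to a genuine bundle isomorphism is automatic, since both sides have rank $n-k=\dim\Si+1$ and hence lie in the stable range, so it suffices to establish the splitting in $\rKO(\Si)$, after which the block construction above realises it by an honest isomorphism.
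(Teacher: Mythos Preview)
Your overall strategy is the same as the paper's---analyse the bundle map $\phi=df|_{T\Si}$, locate its kernel and cokernel over the cusp locus $Z=\Si^{k+1,1}$, and then promote $\phi$ to an isomorphism after adjoining a line on each side---but you miss the one ingredient that makes the construction close up, and it is precisely the ingredient the paper isolates.

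Your block map requires, over $Z$, that $\psi$ send $\ga|_Z$ isomorphically onto a complement of $\im\phi$ and that $\be$ send $\ker\phi$ isomorphically onto $\ep^1$. With your own identifications $\ga|_Z\cong\nu_Z\cong\ker\phi$ and $\coker\phi\cong\nu_Z^{\otimes2}\cong\ep^1$, both of these force $\nu_Z\cong\ep^1$. Equivalently, your ``bookkeeping'' reduces to $\nu_Z\oplus\nu_Z\cong\ep^2$, which does \emph{not} follow from $\nu_Z^{\otimes2}\cong\ep^1$: for the tautological line bundle $L$ over $\RP^2$ one has $w_2(L\oplus L)=w_1(L)^2\neq0$. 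So the identifications you obtained from the cusp normal form do not, by themselves, make the matching close up. The paper supplies exactly the missing fact: since $k$ is odd, the fold indices on the two sides of $Z$ are different, hence the normal bundle $\nu_Z$ of $Z$ in $\Si$ is trivial. The paper then uses a chosen trivialization of $\ker d(f|_\Si)|_Z\cong\nu_Z$ to build a single global monomorphism $i\co T\Si\to f^*df(TM)|_\Si\oplus\ep^1$ via a bump-function interpolation, and identifies $\coker i$ with $\ga$ by observing that the induced trivialization of $\coker i$ on $\Si\setminus Z$ switches sign across $Z$.

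Two smaller points. Your appeal to Lemma~\ref{thm:LS} does not fit: that lemma produces an epimorphism $\ep^2\to l$ from two local trivializations of a line bundle, not a patching of two bundle isomorphisms across $Z$; the paper's interpolation with a bump function is what actually performs the gluing. And your final stable-range remark does not rescue the argument, because you never establish the equality in $\rKO(\Si)$ independently of the block construction. Once you insert the triviality of $\nu_Z$ (using $k$ odd), your plan becomes essentially the paper's proof.
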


\begin{proof}
Denote by $C$ the manifold $\Sigma^{k+1,1}$.
Since $f$ is a cusp map, we have $C=\Sigma^{k+1,1,0}$.
We will first construct a bundle monomorphism
$$
i \co T\Si \to f^*df(TM)|_\Si \+ \ep^1
$$
covering the identity map of $\Si$. Apart from $C$, the
map $df$ is an isomorphism between $T\Si$ and $f^*df(TM)|_\Si$. On
$C$ the restriction of $df$ to
$TC$ is a monomorphism hence there is an isomorphism
$$
j \co T\Sigma|_{C} \to f^*df(T{C})|_{C} \+ \ker d(f|_{\Si})|_{C}
$$
defined by  composing  the isomorphism
$T\Sigma|_{C} \cong TC \+ \ker d(f|_{\Si})|_{C}$ with
the isomorphism
$$
(df|_{TC} , {\mathrm {id}}_{\ker d(f|_{\Si})|_{C}}) \co 
TC \+ \ker d(f|_{\Si})|_{C} \to 
f^*df(T{C})|_{C} \+ \ker d(f|_{\Si})|_{C}.
$$
Denote by ${\mathrm {pr}}_{\ker} \co
T\Si|_{C} \to \ker d(f|_{\Si})|_{C}$
the composition of $j$ with the projection  of
$f^*df(T{C}) \+ \ker d(f|_{\Si})|_{C}$
to the second factor.

\par

Since $\ker d(f|_{\Si})|_{C} \subset T\Si$ is never tangent to $C$, we can
identify it with the normal bundle of $C$ in $\Si$. But $k$
is odd hence this normal bundle is trivial -- the indices of fold
points on the two sides are different. After choosing a
trivialization of $\ker d(f|_{\Si})|_{C}$, the map ${j}$ can be considered as an
embedding of $T\Si|_{C}$ into $f^*df(TM)|_{C} \+ \ep^1$, with its
image $\im {j} = f^*df(T{C})|_{C} \+ \ep^1$. This embedding extends
as a fiberwise embedding onto a small neighborhood $N(C)$ 
of $C$ in
$\Si$, and we will consider ${j}$ 
and also ${\mathrm {pr}}_{\ker}$ to be defined on $N(C)$.

\par

Define $i$ to be the linear interpolation of 
the embedding $j \co T\Si|_{N(C)} \to f^*df(TM)|_{N(C)} \+ \ep^1$
and the map $(df |_{T\Si},0) \co T\Si
\to f^*df(TM)|_\Si \+ \ep^1$. That is, we take a bump function $\lambda \co T\Si
\to [0,1]$ such that $\lambda=0$ outside over a tubular neighborhood of
$C$ in $\Si$ and $\lambda^{-1}(\{1\})=T\Si|_{C}$, and we
define $i$ to be $(df |_{T\Si},\lambda {\mathrm {pr}}_{\ker}) \co T\Si \to 
 f^*df(TM)|_{\Si} \+ \ep^1$. Thus $i$ is well-defined, since
$\lambda=0$ where ${\mathrm {pr}}_{\ker}$ is not defined, and it is clear that $i$ has
full rank both on $C$ and its complement in $\Si$.

\par

From this embedding $i$, we get $f^*df(TM)|_\Si \+ \ep^1 \cong T\Si \+ \coker
i$, and we only need to identify $\coker i$ with $\gamma$. Indeed, on the
set $\Si - C$ the line bundle $\coker i$ is trivial as
$\im i$ projects isomorphically onto $f^*df(TM)|_\Si$. On a tubular neighborhood
of  $C$,  this trivialization of $\coker i|_{\Sigma - C}$
has the opposite signs on the two sides of $C$, thus $w_1(\coker
i)$ is dual to $C$ in $\Si$ as claimed.
\end{proof}

\begin{cor}\label{thm:tsigma}
For a cusp map $f$, we have $w(T\Si)=w({f|_\Si}^*TQ)w(\nu)^{-1}w(\ga)^{-1}$,
where $\nu$ denotes the line bundle $f^*TQ|_\Si / f^*df(TM)|_\Si$.
\end{cor}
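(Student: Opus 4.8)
The plan is to deduce the corollary directly from the preceding Proposition by passing to total Stiefel--Whitney classes. First I would apply the total Stiefel--Whitney class $w(\cdot)$ to the isomorphism $T\Si \+ \ga \cong f^*df(TM)|_\Si \+ \ep^1$ supplied by that Proposition (whose hypothesis, that $f$ be a cusp map, is exactly the hypothesis of the corollary). Using the Whitney product formula together with $w(\ep^1)=1$, this yields the identity
$$w(T\Si)\, w(\ga) = w(f^*df(TM)|_\Si)$$
in $H^*(\Si; \Z_2)$.

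Next I would rewrite the right-hand side in terms of $f|_\Si^*TQ = f^*TQ|_\Si$. Since $f$ is a corank $1$ map, the cokernel $\nu = f^*TQ|_\Si / f^*df(TM)|_\Si$ is a line bundle, so over the paracompact base $\Si$ the short exact sequence
$$0 \to f^*df(TM)|_\Si \to f^*TQ|_\Si \to \nu \to 0$$
splits, giving $f^*TQ|_\Si \cong f^*df(TM)|_\Si \+ \nu$. Applying $w(\cdot)$ once more and invoking multiplicativity produces $w(f|_\Si^*TQ) = w(f^*df(TM)|_\Si)\, w(\nu)$, and hence $w(f^*df(TM)|_\Si) = w(f|_\Si^*TQ)\, w(\nu)^{-1}$.

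Combining the two displayed identities and solving for $w(T\Si)$ gives
$$w(T\Si) = w(f|_\Si^*TQ)\, w(\nu)^{-1}\, w(\ga)^{-1},$$
which is the asserted formula. The only point that calls for a word of justification is the meaning of $w(\nu)^{-1}$ and $w(\ga)^{-1}$: every total Stiefel--Whitney class has constant term $1$ and is therefore a unit in the graded ring $H^*(\Si;\Z_2)$, so these formal inverses are well-defined. I do not expect any genuine obstacle here, as the substantive content lies entirely in the preceding Proposition; the corollary is a purely formal manipulation of characteristic classes built on top of it.
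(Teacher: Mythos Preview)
Your proposal is correct and is precisely the natural argument the paper has in mind: the corollary is stated without proof immediately after the Proposition, so the authors regard it as an immediate consequence via the Whitney product formula applied to $T\Si \+ \ga \cong f^*df(TM)|_\Si \+ \ep^1$ together with the splitting $f^*TQ|_\Si \cong f^*df(TM)|_\Si \+ \nu$. There is nothing to add.
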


Let $c \in H^1(\Sigma)$ denote the characteristic class $w_1(\ga)$. As noted
above, the manifold $\Si^{k+1,1}$ has a trivial normal bundle in $\Si$,
thus $c^2=0$. Let $b$ denote $w_1( f^*TQ|_\Si / f^*df(TM)|_\Si ) \in H^1(\Sigma)$.

\begin{proof}[Proof of Theorem~\ref{charclass}]
 Let $\de$ be $0$ if $f$ is a fold map, and let $\de$ be $1$ otherwise.
If 
 the Morin map $f \co M^n \to Q^{n-k}$ 
 is not a cusp map and both $M$ and $Q$
are orientable, then perturb $f$ to get a cusp map, see \cite{Sad}, and 
denote this cusp map by $f$ as well for simplicity.

By the blowup formula for Stiefel-Whitney classes 
\cite[Theorem~10 and Remark~(2) on page 328]{GePa},
we can express in our notation the Stiefel-Whitney classes of $TM$ in terms of the classes
of $T\bls \csi M$, $T\Si$ and $\csi$ in the following way:
\begin{multline*}
w(T\bls \csi M)-w(\pi^*TM) = \\ =
{\tilde \imath}_{!} \left (w(p^*T\Si) \frac{1}{w_1(\mu)} \left (\sum_{t=0}^{k+1}w_t(p^*\csi)(1+w_1(\mu))^{k+2-t} - w(p^*\csi)\right )
\right).
\end{multline*}
Here $\mu$ denotes the canonical line bundle 
over $\pi^{-1}(\Si)$.
Recall that $p$ is the restriction $\pi|_{\pi^{-1}(\Si)}$, and the map
${\tilde \imath} \co \pi^{-1}(\Si) \to \bls \csi M$ is
 the natural embedding.  
 By
Theorem~\ref{perkov} and
Remark~\ref{foldpertkov}, the total Stiefel-Whitney class of $T\bls \csi M$ is equal to 
the product of the total Stiefel-Whitney 
class of a $(k+1+\de)$-dimensional bundle and the
total Stiefel-Whitney class of $\pi^* f^* TQ$, which
contains no term of degree greater than $K$. Therefore $w_l(T\bls \csi M) = 0$ 
for $l > k+K+1+\de$. 

\par

Expanding the blowup formula for $r \geq k+K+2+\de$ we thus get
\begin{multline*}
w_r(\pi^* TM) = w_r(T \bls \xi M) + \\ 
+{\tilde \imath}_{!} \left (\sum_{q=0}^{r-1}w_{r-1-q}(p^*T\Si) \left [
\sum_{t=0}^{k+1} w_t(p^*\xi) \sum_{s=0}^{k+1-t}
\binom{k+2-t}{s+1} w_1(\mu)^s \right ]_{deg=q} \right ) = \\
={\tilde \imath}_{!} \left (\sum_{q=0}^{k+1} p^* w_{r-1-q}(T\Si)
\sum_{t=0}^{k+1} \binom{k+2-t}{q-t+1} p^*w_t(\xi) w_1(\mu)^{q-t} \right ),
\end{multline*}
where we use our convention about binomial coefficients and
$w_1(\mu)^{-1}$ is defined to be $0$.

The classes $w_{r-1-q}(T\Si)$ can be obtained from
Corollary~\ref{thm:tsigma}. Under the assumption that $m \geq K + \de$, we have
\begin{multline*}
w_m (T\Si) = \left[ w({f|_\Si}^* TQ) w( \nu)^{-1} w(\gamma)^{-1}
\right]_{\operatorname{deg}=m} =\\= \sum_{l=0}^{m} {f|_\Si}^*w_l(TQ) (b^{m-l}
+ b^{m-l-1}c)=\sum_{l=0}^{K} {f|_\Si}^*w_l(TQ) (b^{m-l} + b^{m-l-1}c).
\end{multline*}
Notice that in the formula all the exponents of $b$ are at least $0$.
Substituting 
  $m=r-1-q$, 
where $0 \leq q \leq k+1$,
we get that
$$
w_{r-1-q}(T\Si) = b^{r-1-q-K-\de} w_{K+\de}(T\Si)
$$
for all $0 \leq q \leq k+1$. Hence we have that if $r
\geq k+K+2+\de$, then
\begin{multline*}
w_r(\pi^* TM) =\\
= {\tilde \imath}_{!} \left (\sum_{q=0}^{k+1} p^* \left(
b^{r-1-q-K-\de}w_{K+\de}(T\Si)\right) \sum_{t=0}^{k+1} \binom{k+2-t}{q-t+1}
p^*w_t(\xi) w_1(\mu)^{q-t} \right) =\\
= {\tilde \imath}_{!} \left ( p^*b^{r-k-K-2-\de} \sum_{q=0}^{k+1}
\sum_{t=0}^{k+1} \binom{k+2-t}{q-t+1} p^* \left( b^{k+1-q} w_{K+\de}(T\Si)
w_t(\xi) \right) w_1(\mu)^{q-t} \right).
\end{multline*}
Notice that the double sum in this formula does not depend on $r$ at all, and let
$\al$ denote 
$$
 \sum_{q=0}^{k+1}
\sum_{t=0}^{k+1} \binom{k+2-t}{q-t+1} p^* \left( b^{k+1-q} w_{K+\de}(T\Si)
w_t(\xi) \right) w_1(\mu)^{q-t}.
$$
Then
$$
w_r(\pi^* TM) = {\tilde \imath}_! \left( \alpha p^*b^{r-k-K-2-\de} \right)
$$
holds for all $r \geq k+K+2+\de$ and we can calculate
products of these characteristic classes by repeatedly applying the formula
$\tilde \imath_!(u) \tilde \imath_!(u) = \tilde \imath_!(1) \tilde
\imath_!(uv)$ as follows.
For $r_1, \ldots, r_m \geq k+K+2+\de$, we have 
\begin{multline*}
w_{r_1}(\pi^* TM) \dots w_{r_m}(\pi^* TM) = \prod_{i=1}^{m} {\tilde\imath}_!
\left( \alpha p^*b^{r_i-k-K-2-\de} \right) = \\
= {\tilde\imath}_!(1)^{m-1}
{\tilde\imath}_! \left( \prod_{i=1}^{m} \alpha p^*b^{r_i-k-K-2-\de}\right) =
{\tilde\imath}_!(1)^{m-1} {\tilde\imath}_! \left( \alpha^m
p^*b^{\sum_{i=1}^{m} \left( r_i -k-K-2-\de \right)} \right).
\end{multline*}
This expression clearly depends only on $m$ and the sum $r_1+\dots+r_m$, and
since the homomorphism $\pi^* \co H^*( M; \Z_2) \to H^*(\bls \csi M; \Z_2)$ is injective, this proves the statement of the theorem.
\end{proof}

\begin{lem}\label{thm:middleNull}
Let $n=2^D+m$, $0 \leq m<2^D$. Then $\binom{n}{m}$ is odd, and $\binom{n}{r}$ is
even for all $r$ satisfying $m<r<2^D$.
\end{lem}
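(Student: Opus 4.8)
The plan is to reduce the statement to a bitwise condition via Lucas' theorem: for nonnegative integers $a$ and $b$, the coefficient $\binom{a}{b}$ is odd if and only if the binary expansion of $b$ is \emph{dominated} by that of $a$, meaning that every binary digit of $b$ is less than or equal to the corresponding digit of $a$ (equivalently, the positions at which $b$ has a $1$ form a subset of the positions at which $a$ has a $1$). One could alternatively invoke Kummer's theorem, by which $\binom{a}{b}$ is odd exactly when the base-$2$ addition of $b$ and $a-b$ produces no carries; both formulations lead to the same combinatorial criterion.

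First I would record the binary expansion of $n=2^D+m$. Since $0 \leq m < 2^D$, the integer $m$ occupies only the bit positions $0, \dots, D-1$, so $n$ has a $1$ in position $D$ and, in the positions $0, \dots, D-1$, exactly the digits of $m$; the contribution $2^D$ does not interact with the lower digits. Thus, throughout the argument, the digits of $n$ in positions $0,\dots,D-1$ coincide with those of $m$, and its only higher nonzero digit is the $1$ in position $D$.

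For the first assertion, note that $m$ arises from $n$ by switching off the single top bit in position $D$ while leaving the digits in positions $0, \dots, D-1$ unchanged. The digits of $m$ are therefore dominated by those of $n$, and Lucas' theorem yields that $\binom{n}{m}$ is odd. For the second assertion, suppose $m < r < 2^D$. The inequality $r < 2^D$ forces every digit of $r$ in position $D$ and above to vanish, so the only positions in which domination of $r$ by $n$ could be tested nontrivially are $0, \dots, D-1$, where the digits of $n$ are precisely those of $m$. Hence $r$ is dominated by $n$ if and only if $r$ is dominated by $m$. But bitwise domination implies the numerical inequality $r \leq m$, which contradicts $r > m$; therefore $r$ is not dominated by $n$, and $\binom{n}{r}$ is even.

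The argument involves no genuine obstacle, and the only point meriting care is the implication ``bitwise domination $\Rightarrow$ numerical $\leq$'', which is what makes the strict inequality $r > m$ incompatible with domination and thereby forces $\binom{n}{r}$ to be even for the full range $m < r < 2^D$; the remainder is bookkeeping of binary digits.
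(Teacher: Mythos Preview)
Your proof is correct and follows essentially the same route as the paper: both use the bitwise parity criterion for binomial coefficients (the paper cites Glaisher's formulation, you cite Lucas/Kummer, but these are equivalent for the prime $2$) to reduce the question to whether the binary digits of $r$ are dominated by those of $n$, and then observe that for $r<2^D$ this amounts to domination by $m$, which forces $r\leq m$. The only cosmetic difference is that the paper phrases the second part as ``the maximal such $r$ is $m$'' rather than explicitly invoking the implication from bitwise domination to numerical inequality.
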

\begin{proof}
A criterion of \cite{Gla} states that $\binom{b}{a}$, $0 \leq a \leq b$, is
even if and only if there is a binary position at which $a$ has the digit
$1$ and $b$ has the digit $0$. This criterion shows that $\binom{n}{m}$ is
odd. If $\binom{n}{r}$ is odd for some $0 \leq r < 2^D$, then all the binary
digits of $r$ at the positions where $n$ has $0$ have to be $0$ as well.
Since $r$ has binary length at most $D$, this is equivalent to the condition
that $r$ has binary digits $1$ only at positions where $m = n-2^D$ has $1$
as well, hence the maximal such $r$ is $m$ as claimed.
\end{proof}
\begin{prop}\label{RPfold}
Let $n=2^D+m$ with $0\leq m < 2^D-2$. Assume  there exists an integer
$l$ such that the equations $w_aw_b(\R P^n)=w_cw_d(\R P^n)$ hold for all
$a,b,c,d \geq l$. Then $l \geq m+1$.
\end{prop}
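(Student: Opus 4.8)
The plan is to pass to cohomology and reduce the whole statement to an explicit parity computation. Recall that $H^*(\RP^n;\Z_2) = \Z_2[x]/(x^{n+1})$ with $\deg x = 1$, and that $w(T\RP^n) = (1+x)^{n+1}$, so $w_i(T\RP^n) = \binom{n+1}{i}\,x^i$; thus $w_a w_b = \binom{n+1}{a}\binom{n+1}{b}\,x^{a+b}$, and an equation $w_a w_b = w_c w_d$ with $a+b = c+d =: N \leq n$ is simply the congruence $\binom{n+1}{a}\binom{n+1}{b} \equiv \binom{n+1}{c}\binom{n+1}{d} \pmod 2$. I would argue by contraposition: the family of admissible $l$ is upward closed (if the relations hold for all indices $\geq l$, they hold for all indices $\geq l'$ whenever $l' \geq l$), so it suffices to exhibit a single quadruple $a,b,c,d \geq m$ with $a+b = c+d \leq n$ for which this congruence fails. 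Any such failing quadruple at level $m$ rules out every $l \leq m$, forcing $l \geq m+1$.

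The candidate quadruple is $(a,b,c,d) = (m+1,\,m+1,\,m,\,m+2)$, all of which are $\geq m$ and satisfy $(m+1)+(m+1) = m+(m+2) = 2m+2$. Writing $n+1 = 2^D + (m+1)$ with $0 \leq m+1 < 2^D$, I apply Lemma~\ref{thm:middleNull} to conclude that $\binom{n+1}{m+1}$ is odd while $\binom{n+1}{r}$ is even for $m+1 < r < 2^D$. In particular, since the hypothesis $m < 2^D-2$ gives $m+2 < 2^D$, the coefficient $\binom{n+1}{m+2}$ is even. Consequently $w_{m+1}(T\RP^n) = x^{m+1}$ and $w_{m+2}(T\RP^n) = 0$.

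It then remains to check that the two products genuinely differ. On one side, $w_m w_{m+2} = 0$ because $w_{m+2} = 0$. On the other side, $w_{m+1}^2 = x^{2m+2}$, and this is nonzero precisely because $2m+2 \leq n$: the inequality $m < 2^D-2$ yields $m+2 \leq 2^D-1$, hence $2m+2 \leq 2^D + m - 1 = n-1 < n$, so $x^{2m+2} \neq 0$ in $H^*(\RP^n;\Z_2)$. Thus $w_{m+1}w_{m+1} \neq w_m w_{m+2}$; the quadruple violates the assumed relation at level $m$, and therefore $l \geq m+1$.

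The computations are routine; the only real choice is the quadruple, and I expect the main point to be ensuring simultaneously that (i) all four indices stay $\geq m$, (ii) the common degree $2m+2$ does not exceed $n$, so that the surviving monomial $x^{2m+2}$ is not killed in the truncated ring, and (iii) the off-center index $m+2$ lands in the even window $m+1 < r < 2^D$ of Lemma~\ref{thm:middleNull}. All three are controlled by the single hypothesis $m < 2^D-2$, which is exactly why that bound appears: relaxing it to $m = 2^D-2$ would push $2m+2$ past $n$ and collapse the argument.
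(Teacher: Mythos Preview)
Your proof is correct and follows essentially the same route as the paper: both arguments exhibit the failing relation $w_{m+1}^2 \neq w_m w_{m+2}$ on $\RP^n$, using Lemma~\ref{thm:middleNull} to see that $w_{m+1}=x^{m+1}$ while $w_{m+2}=0$, and the bound $m<2^D-2$ to ensure $x^{2m+2}\neq 0$. The only cosmetic difference is that you apply Lemma~\ref{thm:middleNull} directly to $n+1=2^D+(m+1)$, whereas the paper applies it to $n=2^D+m$ and then invokes Pascal's identity $\binom{n+1}{m+1}=\binom{n}{m}+\binom{n}{m+1}$.
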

\begin{proof}
Denote the generator of $H^1(\R P^n;\Z_2)$ by $x$, then we have
$$
H^*(\R P^n;\Z_2) = \Z_2[x]/(x^{n+1})
$$
and
$$
w_j(\R P^n) = \binom{n+1}{j} x^j.
$$
By Lemma \ref{thm:middleNull} 
$\binom{n+1}{m+1} = \binom{n}{m}+ \binom{n}{m+1}$ is odd,
hence the class $w_{m+1}(\R P^n)$ is the generator $x^{m+1}$, while the
classes $w_{m+2}(\R P^n), \ldots,w_{2^D-1}(\R P^n)$ vanish. Note that there
is at least one class in this latter list due to the constraint $m<2^D-2$.
In particular, the class $w_mw_{m+2}(\R P^n)$ also has to vanish, while
$w_{m+1}^2(\R P^n) = x^{2m+2}$ is not zero as $2m+2 < 2^D-2+m+2 = n < n+1$.
Therefore the relation $w_{m+1}^2=w_{m+2}w_m$ does not hold on $\R P^n$,
implying  $l>m$.
\end{proof}
\begin{prop}
Let $n=2^D+m$ with $0\leq m < 2^D-2$. Then the relations
$$\prod_{i\in I}w_i(\R P^n)=\prod_{j\in J}w_j(\R P^n)$$
hold for all $I,J \subseteq \{ 0, \dots, n \}$ which satisfy $\vert 
I \vert = \vert J \vert$, 
$\min I, \min J \geq m+1$ and
$\sum_{i\in I} i = \sum_{j \in J} j$.
\end{prop}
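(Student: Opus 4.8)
The plan is to reduce the statement to the explicit description $H^*(\RP^n;\Z_2)=\Z_2[x]/(x^{n+1})$ together with the formula $w_j(\RP^n)=\binom{n+1}{j}x^j$ recalled in the proof of Proposition~\ref{RPfold}. Setting $N=\sum_{i\in I}i=\sum_{j\in J}j$, the two sides collapse to $\bigl(\prod_{i\in I}\binom{n+1}{i}\bigr)x^N$ and $\bigl(\prod_{j\in J}\binom{n+1}{j}\bigr)x^N$, so everything reduces to comparing the two scalar coefficients in $\Z_2$ after multiplication by $x^N$. I would immediately dispose of the case $N>n$: there $x^N=0$, both products vanish, and there is nothing to prove. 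The substance is the case $N\le n$, where $x^N\neq0$ and I must show $\prod_{i\in I}\binom{n+1}{i}\equiv\prod_{j\in J}\binom{n+1}{j}\pmod 2$; since a product of integers is odd precisely when each factor is odd, this is the assertion that the two conditions ``$\binom{n+1}{i}$ is odd for every $i\in I$'' and ``$\binom{n+1}{j}$ is odd for every $j\in J$'' have the same truth value.

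The heart of the argument is a counting lemma: \emph{if $N\le n$, every element of $I$ satisfies $i\ge m+1$, and $\binom{n+1}{i}$ is odd for all $i\in I$, then $\vert I\vert\le1$.} To prove it I would invoke the Glaisher criterion already used in Lemma~\ref{thm:middleNull}, which says that $\binom{n+1}{i}$ is odd exactly when $i$ is a binary submask of $n+1$. The hypothesis $m<2^D-2$ gives $m+1<2^D$, so the binary expansion of $n+1=2^D+(m+1)$ consists of the single bit in position $D$ together with the bits of $m+1$, all lying strictly below position $D$.

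Granting this, each admissible $i$ falls into one of two types. If $i$ does not use bit $D$, then $i$ is a submask of $m+1$, hence $i\le m+1$, and the constraint $i\ge m+1$ forces $i=m+1$. If $i$ does use bit $D$, then $i\ge 2^D>m+1$ automatically. Now at most one element can use bit $D$, because two distinct such elements would already contribute at least $2\cdot 2^D=2^{D+1}>n\ge N$ to the sum; and the value $m+1$ can occur at most once since $I$ is a genuine set. Finally, an element $2^D+i'$ using bit $D$ (with $i'\ge0$ a submask of $m+1$) cannot coexist with $m+1$, for their sum alone is $(m+1)+(2^D+i')=n+1+i'>n$. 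These observations leave no room for a second element, so $\vert I\vert\le1$, proving the lemma.

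With the lemma in hand the conclusion follows by comparing $\vert I\vert=\vert J\vert$. If this common value is at least $2$, applying the lemma to both $I$ and $J$ shows each coefficient is even, so both products are $0$ in $\Z_2$. If it equals $1$, the single elements must both equal $N$, whence $I=J=\{N\}$ and the two sides are literally identical. If it is $0$, both are the empty product $1$. In every case the coefficients agree, which proves the relation. The only genuine obstacle is the counting lemma; the rest is bookkeeping, and it is worth noting that the equal-cardinality hypothesis $\vert I\vert=\vert J\vert$ is exactly what prevents the asymmetric possibility of one side having a surviving odd coefficient while the other does not.
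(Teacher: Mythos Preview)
Your argument is correct for the proposition as literally stated. The approach differs from the paper's in two respects worth noting.

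First, the paper organises the proof as a tripartite case analysis on $I$ (for $|I|\geq 2$): either every index equals $m+1$, or some index lies in $\{m+2,\dots,2^D-1\}$ and the corresponding class vanishes by Lemma~\ref{thm:middleNull}, or some index is at least $2^D$ and together with any second index $\geq m+1$ forces total degree $>n$. Your route instead proves a counting lemma characterising exactly which $I$ can have all $\binom{n+1}{i}$ odd, by directly invoking the Glaisher submask criterion on $n+1=2^D+(m+1)$. Both arguments ultimately rest on the same arithmetic, but yours isolates the combinatorial core more cleanly, while the paper's version keeps closer contact with the vanishing pattern $w_{m+2}=\cdots=w_{2^D-1}=0$ already established.

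Second, your counting lemma uses that $I$ is a genuine set (``the value $m+1$ can occur at most once''). The paper, despite writing $I\subseteq\{0,\dots,n\}$, in fact argues for multisets: its first case allows $I$ to consist of several copies of $m+1$ and observes that then $J$ must be the same multiset. Your lemma would fail for multisets (e.g.\ $I=\{m+1,m+1\}$ can have odd product when $2(m+1)\leq n$), so your proof establishes the literal subset statement but not the slightly stronger multiset version the paper implicitly proves. Since the proposition as written is about subsets, this is not a gap, but it is a difference in scope.
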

\begin{proof}
As before, we note that the classes $w_{m+2}(\R P^n), \ldots, w_{2^D-1}(\R P^n)$
vanish.

\par

For $\vert I \vert =1$ the statement  is trivial. For $\vert I
\vert \geq 2$ such that $\min I \geq m+1$ we have three possibilities:
\begin{itemize}
\item $I$ consists of a number of copies of $m+1$. Then the only $J$ which
satisfies both $\min J \geq m+1$ and $\sum_{j \in J} j = \sum_{i \in I} i =
(m+1) \vert I \vert$ is $I$ itself.
\item $I$ contains an index between $m+2$ and $2^D-1$. Then $\prod_{i\in I}
w_i(\R P^n)$ contains a zero class and thus vanishes.
\item $I$ contains at least one index greater than  $2^D-1$. Then taking any
other index $j \in I$ we have $\sum_{i\in I} i \geq 2^D+j \geq 2^D+m+1 =
n+1$. Therefore $\prod_{i\in I} w_i(\R P^n)$ has degree greater than $n$ and
consequently vanishes.
\end{itemize}
Observe that for any $J$ satisfying the requirements of the proposition we
have the analogous three possibilities, hence any such $J$ gives 
the same product of Stiefel-Whitney classes as $I$.
\end{proof}
\begin{rem}
In the  cases $n=2^D-2$ and $n=2^D-1$ the nontrivial
characteristic classes of $\R P^n$ are either all the generators of the
respective cohomology groups $H^*(\R P^n)$ or all vanish, therefore our
multiplicativity condition is satisfied for all indices.
\end{rem}

\begin{proof}[Proof of Theorem~\ref{szingtusk}]
Equip $TM$ and $TQ$ with Riemannian metrics, thus identifying sections of
these bundles with $1$-forms. Assume that we have a trivialization of $TQ \+
\ep^l$ given by a collection of $n-k+l$ linearly independent $1$-forms. Then
any smooth map $f \co M \to Q$ defines pullbacks of these $1$-forms to $TM
\+ \ep^l$ via $df$. By the assumption of the theorem, $\rank df \geq n-k-1$
at all points of $M$, thus the linear span of the pulled-back forms is
at least $n-k+l-1$. The metric on $TM$ identifies these forms with $n-k+l$
vector fields which have a linear span of dimension at least $n-k+l-1$
everywhere. By \cite{Gab, Po, Ro}, the rational Pontryagin class
$p_i^{\Q}(TM)=p_i^{\Q}(TM \+ \ep^l)$ is represented by the locus where $n+l-2i+2$
generic sections of $TM \+ \ep^l$ lie in a subspace of dimension at most
$n+l-2i$. This class therefore vanishes if $n-k+l \geq n+l-2i+2$, that is,
when $2i \geq k+2$.
\end{proof}


\begin{proof}[Proof of Proposition~\ref{Kgamma}]
(2) $\Longrightarrow$ (1):
By \cite{An}, if there is a $TM \+ \ep^1 \to TQ$ epimorphism, then
there is 
a fold map ${M} \to Q$ with orientable singular set.
(1) $\Longrightarrow$ (2):
Assume that we have a tame corank $1$ map $f \co M \to Q$.
The  bundle $\coker df |_\Si = (f^*TQ / f^*df(TM))|_\Si$  is considered as 
a subbundle of $f^* TQ$ and it is trivial.
Similarly to \cite[Proof of Lemma~3.1]{An},
let $L \co \ep^1 \to TQ$ be an extension of the bundle monomorphism $\coker df |_\Si
\to f^*TQ \to TQ$ as a bundle homomorphism covering $f$. Then $df + L$ is an
epimorphism $TM \+ \ep^1 \to TQ$.

Finally, if (1) or (2) holds and $Q$ is stably parallelizable, then
by the above,  we have 
$TM \+ \ep^1 \+ \ep^N \cong \zeta \+ f^*TQ \+ \ep^N \cong \zeta \+ \ep^{N+n-k}$ for some
$N \gg 0$ and a $(k+1)$-dimensional bundle $\zeta$. Thus $g.dim ([TM] - [\ep^n]) \leq k+1$.

If $Q$ is stably parallelizable and
$g.dim ([TM] - [\ep^n]) \leq k+1$, then 
 $TM \+ \ep^N \cong \zeta^{k+1} \+ \ep^{N + n-k-1} \cong \zeta^{k+1} \+ TQ \+ \ep^{N-1}$
  for some $N \gg 0$, and thus 
$TM \+ \ep^1 \cong \zeta^{k+1} \+ TQ$, which proves (2).
\end{proof}

\begin{rem}
If there is a tame Morse-Bott map $f \co P^{n} \to Q^{n-k}$, then 
$TP \oplus \ep^1$ splits as
$\zeta^{k+1} \oplus  f^*TQ$  for some
$(k+1)$-dimensional vector bundle $\zeta^{k+1}$.
\end{rem}

\begin{proof}[Proof of Proposition~\ref{alkgamma}]
Let $\va(n)$ denote the cardinality of the set $\{  0 < s \leq n : s \equiv 0,1,2,4 \mod{8} \}$.
By \cite[\S5]{At}, $[T{\RP^n}] - [\ep^n] = (n+1)x$ and $\ga^i([T{\RP^n}] - [\ep^n])
 = 2^{i-1} \binom{n+1}{i}x$, $i \geq 1$,
where $x$ denotes the generator of $\rKO(\RP^n) = \Z_{2^{\va(n)}}$.
Therefore
$\ga^i([T{\RP^n}] - [\ep^n]) = 0$ if and only if $2^{\va(n)}$ divides $2^{i-1} \binom{n+1}{i}$.
Let $r(n)$ denote the greatest integer $s$ for which $2^{s-1}
\binom{n+1}{s}$ is not divisible by $2^{\va(n)}$.
Then by Proposition~\ref{Kgamma} there is no tame corank $1$ map of
$\RP^{2^n-1}$ into $\R^{2^n-1-k}$ for $k \leq {r(2^n-1)-2}$.
It is easy to see that $\va(2^n-1) = 2^{n-1}-1$ if $n \geq 3$.
By a classical result of E.\ Kummer,
the highest power $c(s)$ of $2$ which divides $\binom{2^n}{s}$ 
can be obtained by counting the number of carries 
when $s$ and $2^n -s$ are added in base $2$. 
For $s \leq 2^{n-1}-1$, we claim that $c(s) = n - R(s)$, where 
$2^{R(s)}$ is the maximal power of $2$ which divides $s$. Indeed,
$2^n-1-s$ is obtained by negating the binary form of $s$ bitwise,
hence $2^n -s$ is obtained by negating the binary form of $s$ bitwise from
the $(n-1)$st to the $R(s)$th binary position, where both of $s$ and $2^n-s$
have the digit $1$, and after that position both have digits $0$.
Therefore when we add $s$ and $2^n-s$ in base $2$, we have $n-R(s)$ carries. 
By the definition of $r(n)$ it follows that $r(2^n-1)$ is the largest integer $s$ for which
$s+n -R(s) < 2^{n-1}$. 
\end{proof}

\subsection{Computing the cobordism class of the source manifold}

Theorem~\ref{charclass} gives us 
relations among the characteristic {\it numbers} of a source manifold of a Morin map as well.
However, by following a different line of argument, we can obtain more relations among
the characteristic numbers as follows.

For a Morin map $f \co M^{n} \to Q^{n-k}$ with odd $k \geq 1$,
let us denote by ${N_{\Si}}$ the projectivization $\RP(\csi \oplus \varepsilon^1)$
of the $(k+2)$-dimensional vector bundle
$\csi \oplus \ep^1$ over the singular set $\Si$,
where  $\csi$ denotes the normal bundle of $\Si$.
 Thus  ${N_{\Si}}$ is 
a closed $n$-dimensional manifold fibered over $\Si$ with 
$\RP^{k+1}$ as fiber. Let $\tau \co {N_{\Si}} \to \Si$ denote this fibration.
\begin{lem}\label{blowupkobord}
The blowup $\bls \csi M$ is  cobordant to the disjoint union of
$M$ and ${N_{\Si}}$.
\end{lem}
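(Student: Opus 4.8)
The plan is to realize $M$, $\bls \csi M$ and $N_\Si$ as three closed manifolds assembled from the \emph{same} three compact pieces glued along a common boundary, and then to finish with a cut-and-paste cobordism argument in $\mathfrak N_n$. Write $D(\csi)$ and $S(\csi)$ for the unit disk and sphere bundles of the normal bundle $\csi$ of $\Si$, identified with a closed tubular neighborhood of $\Si$ in $M$, and set $M_0 = M \setminus \operatorname{int} D(\csi)$, a compact manifold with boundary $S(\csi)$. Let $E = \RP(\csi)$ be the exceptional divisor and let $\widetilde D$ denote the unit disk bundle of the tautological line bundle over $E$. Since the real blowup is obtained by replacing $\operatorname{int} D(\csi)$ by $\widetilde D$, and since the unit sphere bundle of the tautological line bundle over $E$ is canonically $S(\csi)$, we get
$$
M = M_0 \cup_{S(\csi)} D(\csi), \qquad \bls \csi M = M_0 \cup_{S(\csi)} \widetilde D.
$$

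First I would identify $N_\Si$ with $D(\csi)\cup_{S(\csi)}\widetilde D$. On each fiber $\RP(\csi_p \+ \R) \cong \RP^{k+1}$ the lines with nonzero last coordinate form an affine chart canonically identified with $\csi_p$, while the lines contained in $\csi_p \+ 0$ form the hyperplane at infinity $\RP(\csi_p)$. Globally this writes $N_\Si$ as the union of a disk bundle neighborhood of the zero section, namely $D(\csi)$, and a tubular neighborhood of the infinity section $E = \RP(\csi)$. The normal bundle of $E$ in $\RP(\csi \+ \ep^1)$ is the tautological line bundle over $E$ (as a real line bundle it is pinned down by its first Stiefel--Whitney class), so this tubular neighborhood is exactly $\widetilde D$, attached to $D(\csi)$ along $S(\csi)$. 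Hence $N_\Si = D(\csi)\cup_{S(\csi)}\widetilde D$, and crucially the \emph{same} piece $\widetilde D$ occurs in $\bls \csi M$ and in $N_\Si$.

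It then remains to establish a cut-and-paste relation: if $V_1, V_2, V_3$ are compact manifolds with a common boundary $S$, then $V_1\cup_S V_2$, $V_2\cup_S V_3$ and $V_1\cup_S V_3$ together bound. Taking $V_1 = M_0$, $V_2 = D(\csi)$, $V_3 = \widetilde D$ and $S = S(\csi)$, the three unions are $M$, $N_\Si$ and $\bls \csi M$, so the relation yields $[\bls \csi M] = [M]+[N_\Si]$ in $\mathfrak N_n$, which is the claim. To build the bounding $(n+1)$-manifold $W$ I would glue the three cylinders $V_i \x [0,1]$ along their lateral faces $(\partial V_i)\x[0,1] = S\x[0,1]$ in a tree pattern, subdividing one interval to absorb the odd number of lateral faces, so that the six end faces $V_i\x\{0,1\}$ pair off along $S$ into precisely the three unions $V_i\cup_S V_j$; after rounding the resulting codimension-two corners one has $\partial W = (V_1\cup_S V_2)\sqcup(V_2\cup_S V_3)\sqcup(V_1\cup_S V_3)$.

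I expect the main obstacle to lie in the two geometric identifications rather than in the formal cobordism. One must verify that the blown-up disk bundle and the infinity tubular neighborhood of $N_\Si$ are the \emph{same} disk bundle of the tautological line bundle over $\RP(\csi)$ — in particular determining the normal bundle of the infinity section and matching both boundaries with $S(\csi)$ — and that $S(\csi)$ is the common gluing locus in all three decompositions. Once these are in place, the cut-and-paste construction of $W$ is routine modulo bookkeeping of the lateral gluings and corner rounding, and a quick consistency check ($k=1$ with $\Si$ a point gives $M \# \RP^2 \sim M \sqcup \RP^2$, confirmed by Euler characteristics mod $2$) supports the computation.
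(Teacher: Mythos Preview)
Your argument is correct and rests on the same three decompositions the paper implicitly uses, namely $M=M_0\cup_{S(\csi)}D(\csi)$, $\bls\csi M=M_0\cup_{S(\csi)}\widetilde D$ and $N_\Si=D(\csi)\cup_{S(\csi)}\widetilde D$, but the bounding $(n+1)$-manifold is produced differently. You appeal to the general cut-and-paste principle for three compact pieces sharing a boundary; the paper instead writes down one explicit cobordism: take the disk bundle $D(\csi\oplus\ep^1)$ and glue it to $(M\sqcup N_\Si)\times[0,1]$ along two disjoint copies of $D(\csi)$ sitting near the two pole sections of the boundary sphere bundle $S(\csi\oplus\ep^1)$, so that the third boundary component becomes $M_0\cup(S(\csi)\times I)\cup\widetilde D\cong\bls\csi M$.

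The payoff of the paper's explicit model is that it visibly carries a map to $Q$ (send each fiber of $D(\csi\oplus\ep^1)$ over $p\in\Si$ to $f(p)$), so the cobordism upgrades to a bordism of maps $\bls\csi f\sim f\sqcup(f|_\Si\circ\tau)$. This refinement is exactly what Remark~\ref{mapkobord} records and what feeds into Proposition~\ref{karosztalyok}. Your cut-and-paste construction can be made to respect maps to $Q$ too, since every piece $M_0$, $D(\csi)$, $\widetilde D$ and every cylinder maps compatibly, but you would need to say this explicitly if you intend to use the lemma in that stronger form.
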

\begin{proof}
Consider the disk bundle $D(\csi \oplus \ep^1)$ of $\csi \oplus \varepsilon^1$. Let $U$ and $V$
 be small neighborhoods of $\csi \+ \{ 1 \}$ and $\csi \+ \{ -1 \}$
respectively in the boundary $\partial D(\csi \oplus \ep^1)$. 
The total space of $D(\csi \oplus \ep^1)$ can be naturally 
glued to the boundary component 
$\left( M \sqcup \R P(\csi \+ \varepsilon^1)
\right) \times \{0\}$ of 
 $\left( M \sqcup \R P(\csi \+ \varepsilon^1)
\right) \times [0,1]$ along $U$ identified with the total space of $\xi$ as
an open submanifold of $M$ and along $V$ identified with $[\csi:1] \subset
\RP(\csi \+ \varepsilon^1)$. After smoothing the corners introduced by the
gluing, the resulting $(n+1)$-manifold has boundary consisting of the
disjoint union of $M$, $\RP(\csi \+ \varepsilon^1)$ and $\bls \csi M$.
This completes the proof.
\end{proof}
\begin{rem}\label{mapkobord}
As one can see easily, the cobordism in Lemma~\ref{blowupkobord} extends
naturally to a bordism of
the maps $\bls \xi f \co \bls \csi M \to Q$ and the union  $f \sqcup f|_\Si \circ \tau 
\co M \sqcup N_\Si \to Q$. Indeed, we
can map all points of each fiber  of $D(\csi \oplus \ep^1)$ over $p$ to $p$, where
$p \in \Si$, and then into $Q$ by $f|_\Si$. Thus the
evaluation of any ``characteristic number'' $w_I$ (i.e.\ degree $n$ monomial of Stiefel-Whitney
characteristic classes) of $T\bls \xi M-(\bls \xi f)^*TQ$ on the fundamental
class  $[\bls \xi M] \in H_n(\bls \xi M; \Z_2)$ is equal to the sum of the evaluations of 
  $w_I(TM - f^*TQ)$ and $w_I(TN_\Si - \tau^*  f|_\Si^*TQ)$
 on the fundamental classes $[M]$ and $[N_\Si]$, respectively.
\end{rem}

Recall that $b$ denotes $w_1( f^*TQ|_\Si / f^*df(TM)|_\Si) \in H^1(\Sigma)$ and
 $w_1(\ga) = c \in H^1(\Sigma)$ is the Poincar\'e dual to the class represented by
$\Si^{k+1,1}$. We have also seen that $c^2 = 0$.

\begin{prop}\label{karosztalyok}
Let $f \co M^n \to Q^{n-k}$ be a cusp map.
Let $\de$ be $0$ if $f$ is a fold map, and let $\de$ be $1$ otherwise.
\begin{enumerate}[\rm (1)]
\item
For $r \geq k+1+\de$, the degree $r$ term of 
$w(T{N_{\Si}} - \tau^*{f|_\Si}^*TQ)$  
has the form $$\tau^*b^{r-k-1-\de} w_{k+1+\de}(T{N_{\Si}}-\tau^*{f|_\Si}^*TQ).$$
\item
Any two characteristic numbers of $w(T{N_{\Si}}-\tau^*{f|_\Si}^*TQ)$ 
 which contain the same number of multiplicands and contain no
instances of $w_1, \ldots, w_{k+\de}$ are equal.
\item
For any multiindex  $J = (j_1, \dots, j_l)$ such that $\sum_{i=0}^{l} j_i
= n$ and $j_i \geq k+2+\de$ for some $1\leq i \leq l$, the
characteristic numbers $\langle w_J(TM-f^*TQ), [M] \rangle$ and
$\langle w_J(T{N_{\Si}}-\tau^*{f|_\Si}^*TQ), [N_\Si] \rangle$ coincide.
The characteristic numbers defined by $w(TM-f^*TQ)$ which involve  
no $w_1, \ldots, w_{k+\de}$ satisfy the property of depending only on the
number of multiplicands.
\end{enumerate}
\end{prop}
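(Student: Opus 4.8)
The plan is to compute the total Stiefel–Whitney class
\[
W := w\big(T{N_{\Si}} - \tau^*{f|_\Si}^*TQ\big)
\]
in a factored form and then to read off (1)–(3) from it. Writing $\tau \co {N_{\Si}} \to \Si$ for the $\RP^{k+1}$-bundle $\RP(\csi \oplus \ep^1)$ and splitting $T{N_{\Si}} \cong \tau^*T\Si \oplus T^{\mathrm{vert}}$, the Euler sequence of the projective bundle gives $T^{\mathrm{vert}} \oplus \ep^1 \cong \ell^* \otimes \tau^*(\csi \oplus \ep^1)$, where $\ell$ is the tautological line bundle; hence $w(T^{\mathrm{vert}}) = w(\ell^* \otimes \tau^*(\csi \oplus \ep^1))$ is a class of degree at most $k+1$, since $T^{\mathrm{vert}}$ has rank $k+1$. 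Feeding Corollary~\ref{thm:tsigma} in the form $w(T\Si)\,w({f|_\Si}^*TQ)^{-1} = (1+b)^{-1}(1+c)^{-1}$ into this splitting yields
\[
W = \tau^*\!\left[(1+b)^{-1}(1+c)^{-1}\right]\cdot w\big(\ell^* \otimes \tau^*(\csi \oplus \ep^1)\big),
\]
where crucially $c=0$ when $f$ is a fold map, because then $\Si^{k+1,1}=\emptyset$ and $c=w_1(\ga)$ is Poincar\'e dual to it.

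To prove (1) and (2) I would exploit an elementary identity for $\beta := (1+b)^{-1}(1+c)^{-1}$. Denoting by $\beta_a$ its degree-$a$ part, the relation $c^2=0$ gives $\beta_a = b^{\,a-1}(b+c)$ for $a \geq 1$ (and $\beta_a = b^a$ when $c=0$), so that $\beta_a = b\,\beta_{a-1}$ holds \emph{precisely} for $a \geq 1+\de$. Since the second factor of $W$ has degree at most $k+1$, every term contributing to the degree-$r$ part $W_r$ carries a factor $\tau^*\beta_a$ with $a \geq r-(k+1)$; thus for $r \geq k+2+\de$ each such $a$ satisfies $a \geq 1+\de$, and the identity collapses the whole sum to $W_r = \tau^* b\cdot W_{r-1}$. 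Iterating down to $r=k+1+\de$ gives exactly $W_r = \tau^* b^{\,r-k-1-\de}\,w_{k+1+\de}(T{N_{\Si}} - \tau^*{f|_\Si}^*TQ)$, which is (1). For (2), part (1) turns a product of classes of indices $r_1,\dots,r_m \geq k+1+\de$ into $\tau^*\!\big(b^{\sum_i r_i - m(k+1+\de)}\big)\,w_{k+1+\de}^{\,m}$, a class depending only on $m$ and $\sum_i r_i$; as any characteristic number has total degree $n$, two such products with equally many multiplicands have $\sum_i r_i = n = \sum_i s_i$ and so agree already in $H^n({N_{\Si}};\Z_2)$.

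For (3) I would pass through the blowup. By Remark~\ref{mapkobord}, the bordism of Lemma~\ref{blowupkobord} gives, for every degree-$n$ monomial $w_J$,
\[
\big\langle w_J(T\bls\csi M - (\bls\csi f)^*TQ),[\bls\csi M]\big\rangle = \big\langle w_J(TM - f^*TQ),[M]\big\rangle + \big\langle w_J(T{N_{\Si}} - \tau^*{f|_\Si}^*TQ),[{N_{\Si}}]\big\rangle.
\]
By Theorem~\ref{perkov} (respectively Remark~\ref{foldpertkov}) the virtual bundle $T\bls\csi M - (\bls\csi f)^*TQ$ has the Stiefel–Whitney class of a $(k+1+\de)$-dimensional bundle, so its $w_i$ vanish for $i \geq k+2+\de$; hence whenever some $j_i \geq k+2+\de$ the left-hand term is $0$ and the two remaining characteristic numbers coincide, proving the first assertion. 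For the second, a multiindex with all entries $\geq k+1+\de$ and total $n$ either satisfies $n=l(k+1+\de)$, which forces the all-equal multiindex and leaves nothing to compare, or satisfies $n>l(k+1+\de)$, which forces some entry $\geq k+2+\de$; in the latter case the first assertion moves the number to ${N_{\Si}}$, where (2) shows it depends only on the number $l$ of multiplicands.

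The projective-bundle splitting and Corollary~\ref{thm:tsigma} are routine inputs, so the genuinely delicate point is the bookkeeping of $\de$: one must track simultaneously that $T^{\mathrm{vert}}$ has rank $k+1$, bounding the degree of the second factor by $k+1$, and that the identity $\beta_a=b\,\beta_{a-1}$ loses one admissible degree exactly when $c\neq 0$, i.e.\ in the cusp case. These two facts conspire to give the single threshold $k+1+\de$, and getting this off-by-one accounting right — together with checking that the perturbed and original maps $\widetilde{\bls\csi f}$, $\bls\csi f$ carry the same pulled-back class $w(TQ)$ in the bordism argument — is where I expect the main care to be needed.
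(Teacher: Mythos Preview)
Your proof is correct and follows essentially the same route as the paper: split $T{N_{\Si}}$ into $\tau^*T\Si$ plus the rank-$(k+1)$ vertical bundle, feed in Corollary~\ref{thm:tsigma} to write $w(T\Si)w({f|_\Si}^*TQ)^{-1}=(1+b)^{-1}(1+c)^{-1}$, and use the degree bound on the vertical factor together with the relation $c^2=0$ to obtain $W_r=\tau^*b^{\,r-k-1-\de}W_{k+1+\de}$; parts (2) and (3) are then handled exactly as you describe, via Remark~\ref{mapkobord} and the stable splitting of Theorem~\ref{perkov}/Remark~\ref{foldpertkov}. Your recursive packaging $W_r=\tau^*b\cdot W_{r-1}$ for $r\geq k+2+\de$ is a mild streamlining of the paper's direct computation, but the content is identical.
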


\begin{proof}
 The fibration ${\tau} \co {N_{\Si}} {\to}
\Sigma$ has fiber $\RP^{k+1}$ and $T{N_{\Si}}$ splits into
the direct sum of the horizontal component ${\tau}^* T\Sigma$ and the vertical
component $\psi$ having rank
$k+1$, which is tangent to the fibers.
By Corollary~\ref{thm:tsigma}, we have
$$
w({f|_\Si}^*TQ)^{-1}w(T\Sigma) = w(\nu)^{-1} w(\ga)^{-1} = (1+b)^{-1} (1+c)^{-1}
= \sum_{i=0}^{\infty} b^i (1 + c).
$$
Hence we can express $w(\tau^*{f|_\Si}^*TQ)^{-1}w(TN_{\Sigma})$ as
\begin{multline*}
w(\tau^*{f|_\Si}^*TQ)^{-1}w(TN_\Si) = w(\tau^*{f|_\Si}^*TQ)^{-1}{\tau}^*w(T\Sigma)w(\psi) =\\
= \left( 1+ \sum_{i=1}^{\infty} (\tau^*b^i + \tau^*b^{i-1}c) \right)
\left( \sum_{j=0}^{k+1} w_j(\psi) \right)  = \\ \overset{(a)}{=}
 \sum_{{ r=0}}^{\infty} 
\sum_{j=0}^{\operatorname{min} \{ r, k+1\} }  w_j(\psi)
(\tau^*b^{r-j} + \tau^*b^{r-j-1}c),
\end{multline*}
where (a) follows from rearranging
the sums by $i+j = r$, and
we use the convention that $b^{-1} = 0$. The class
$w_r(TN_\Sigma-\tau^*{f|_\Si}^*TQ)$ in the case of $r \geq k+2$ therefore has the
form
\begin{multline*}
w_r(TN_\Sigma-\tau^*{f|_\Si}^*TQ) = 
\sum_{j=0}^{k+1} w_j(\psi) (\tau^*b^{r-j} +\tau^*b^{r-j-1}c) =\\
= \tau^*b^{r-k-2} w_{k+2} (TN_\Sigma-\tau^*{f|_\Si}^*TQ).
\end{multline*}
If additionally $\de = 0$ (thus $c=0$), then similarly we have  
\begin{multline*}
w_r(TN_\Sigma-\tau^*{f|_\Si}^*TQ) = 
\sum_{j=0}^{k+1} w_j(\psi) \tau^*b^{r-j} = \tau^*b^{r-k-1}
w_{k+1} (TN_\Sigma-\tau^*{f|_\Si}^*TQ)
\end{multline*}
for $r \geq
k+1$.
These two equalities prove (1).

\par

Consider now a product $\prod_{i=1}^m w_{j_i}(T{N_{\Si}}-\tau^*{f|_\Si}^*TQ)$
which contains no instances of $w_1(T{N_{\Si}}-\tau^*{f|_\Si}^*TQ), \ldots,
w_{k+\de}(T{N_{\Si}}-\tau^*{f|_\Si}^*TQ)$. By  (1), it has the form
$$
\tau^*b^{n-(k+1+\de)m}
w_{k+1+\de}^{m}(T{N_{\Si}}-\tau^*{f|_\Si}^*TQ).
$$
In particular, this expression depends only on $m$ and hence any two
characteristic numbers of $w(T{N_{\Si}}-\tau^*{f|_\Si}^*TQ)$ with the same number
of multiplicands and no instances of $w_1, \dots, w_{k+\de}$ are equal. This
proves (2).

\par

By Theorem~\ref{perkov} and
Remark~\ref{foldpertkov} the formal difference bundle $T\bls \csi M-\pi^*f^*TQ$ is
stably equivalent to a $(k+1+\de)$-dimensional bundle. Therefore the
characteristic classes 
$w_r(T \bls \csi {M}-\pi^*f^*TQ)$ of $T \bls \csi {M}-\pi^*f^*TQ$ with $r>k+1+\de$ vanish.
Thus, by Remark~\ref{mapkobord}
those characteristic numbers of the virtual normal bundles
of the maps $N_\Si \to Q$ and $M \to Q$ which  contain $w_r$ with $r>k+1+\delta$ coincide.
This finishes the proof of (3).
\end{proof}

\begin{cor}\label{spin}
Let $w_1(TM), \ldots, w_k(TM) = 0$
and $Q$ be stably parallelizable.
 If there exists a fold map $M^n \to Q^{n-k}$, then
each of the nonzero characteristic numbers of $M$, which has more than one 
multiplicand, 
is equal to a number of the form $w_{k+1}^l w_{n-(k+1)l} [M]$ with $0 \leq
l \leq \frac{n}{k+1}-1$.
\end{cor}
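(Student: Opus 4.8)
The plan is to deduce the corollary directly from Proposition~\ref{karosztalyok}(3), using stable parallelizability to strip off the target. First I would observe that since $Q$ is stably parallelizable, the pullback $f^*TQ$ is stably trivial, so $w(f^*TQ) = 1$ and hence $w(TM - f^*TQ) = w(TM)$. Thus the characteristic numbers of the virtual bundle $TM - f^*TQ$ are exactly the characteristic numbers of $M$. A fold map is a cusp map with $\de = 0$, so Proposition~\ref{karosztalyok} applies with $\de = 0$, and part (3) asserts that any characteristic number of $w(TM - f^*TQ)$ containing no factor among $w_1, \ldots, w_k$ depends only on its number of multiplicands.

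Next I would bring in the hypothesis $w_1(TM) = \cdots = w_k(TM) = 0$. This forces every monomial $w_J(TM)$ that contains a factor $w_i$ with $1 \leq i \leq k$ to vanish, so every \emph{nonzero} characteristic number of $M$ is represented by a monomial built only from $w_{k+1}, w_{k+2}, \ldots$, i.e.\ one containing no $w_1, \ldots, w_k$. Combining this with the previous paragraph, every nonzero characteristic number of $M$ depends only on the number of multiplicands of its defining monomial.

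Now I would take a nonzero characteristic number $w_J[M]$ with $m \geq 2$ multiplicands. All indices of $J$ are $\geq k+1$ and sum to $n$, so $(k+1)m \leq n$ and hence $m \leq n/(k+1)$. Setting $l = m-1$ yields $1 \leq l \leq n/(k+1) - 1$. I would then compare $w_J[M]$ with the characteristic number of the canonical monomial $w_{k+1}^l w_{n-(k+1)l}$, which has exactly $l+1 = m$ multiplicands, and whose final index $n-(k+1)l = n-(k+1)(m-1) \geq k+1$ by the bound on $m$, so that it too avoids $w_1, \ldots, w_k$. By the multiplicand-count invariance these two numbers agree, giving $w_J[M] = w_{k+1}^l w_{n-(k+1)l}[M]$ with $l$ in the required range.

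The argument is essentially bookkeeping once Proposition~\ref{karosztalyok}(3) is available; the only delicate point is verifying that the canonical representative $w_{k+1}^l w_{n-(k+1)l}$ is itself a legitimate comparison class, namely that its final index $n-(k+1)l$ is at least $k+1$ so that it also involves none of $w_1, \ldots, w_k$. This is exactly what the constraint $l \leq n/(k+1) - 1$ guarantees, and that constraint is in turn forced by the fact that all indices of $J$ are $\geq k+1$.
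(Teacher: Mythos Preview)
Your proof is correct and follows exactly the approach implicit in the paper: the corollary is stated without proof as a direct consequence of Proposition~\ref{karosztalyok}(3), and your argument spells out precisely how that deduction goes (stable parallelizability identifies the virtual bundle with $TM$, the vanishing hypothesis kills all monomials containing a low-index factor, and the multiplicand-count invariance from part~(3) reduces everything to the canonical representative $w_{k+1}^l w_{n-(k+1)l}$). The same reasoning is later reused almost verbatim in the paper's proof of Proposition~\ref{nullcobnagykodimkonnyu}.
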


\subsubsection{Adding Dold relations to the relations of Proposition~\ref{karosztalyok}}
In this section, we work in the case of fixed $n \geq 2$, $k=1$ and assume an
orientable source manifold $M^n$.

\par

Denote by $\mathcal I$ the linear space in the graded $\Z_2$-algebra 
$\Z_2[w_1,\dots,w_n, \dots]$ spanned by the set
\begin{multline*}
\{ q_1 - q_2 \in \Z_2[w_1,\ldots,w_n]_{{\mathrm {deg}} = n} :
q_1, q_2 \text{ are monomials in }\Z_2[w_2,\ldots,w_n]_{{\mathrm {deg}} = n},\\ |q_1| = |q_2|  \} \cup 
\{ q_1 \in \left[w_1\Z_2[w_1,\ldots,w_n]\right]_{{\mathrm {deg}} = n} \},
\end{multline*}
where $\vert q \vert$ denotes the length of the monomial $q$, i.e.\ the
number of (not necessarily different) indeterminants whose product is $q$.
Proposition~\ref{karosztalyok} (3) states that if $M$ is an $n$-manifold
admitting a codimension $-1$ fold map 
into a stably
parallelizable target, then the evaluation $w_i = w_i(TM)$ sends all the
members of $\mathcal I$ to $0$.

\par

Denote by $\mathcal D$ the linear
space spanned in $\Z_2[w_1,\dots,w_n, \ldots]$ by the set
$$\{ q \in \Z_2[w_1,\dots,w_n] :  p \in
\Z_2[w_1,\dots,w_n, \dots]_{{\mathrm {deg}} \leq n-1}, q = [w^{-1} 
\cdot \operatorname{Sq} p]_{{\mathrm {deg}}=n} \},$$
where $w$ stands for  the total Stiefel-Whitney class
$1 + w_1 +\cdots +w_n+ \cdots$.
We will apply a result of Dold \cite{Do} which states that all the
relations between the characteristic numbers of $n$-manifolds are 
exactly those of the form $q=0$ for $q \in \mathcal D$. Combining this set
of relations with $\{ q=0 : q \in \mathcal I_{} \}$ and proving that
 $\dim \Z_2[w_1,\dots, w_n]_{{\mathrm {deg}} = n}/
 (\mathcal D \+ \mathcal I_{}) = 0$ unless $n = 2^a + 2^b -1$ for some $a > b \geq 0$
 forms the core of the proof of Theorem~\ref{nullcob}.

\par

To utilize the relations obtained in Proposition~\ref{karosztalyok} (3), we
will consider the graded algebra homomorphism
$$
\varrho \co \Z_2[w_1,\dots, w_n, \dots] \to \Z_2[x,t], \text{ } \operatorname{deg} x = 2, \operatorname{deg}
t = 1
$$
$$
\varrho(w_1)=0, \varrho(w_s)=xt^{s-2} \text{ for } s \geq 2.
$$
Define $\im \varrho_n$ to be  $\Z_2[x,t]_{{\mathrm {deg}} = n} \cap \operatorname{im}
\varrho = \langle xt^{n-2}, \dots, x^{\lfloor n/2 \rfloor}t^{n-2\lfloor n/2
\rfloor} \rangle$. It is straightforward to see that
$\ker \varrho \cap \Z_2[w_1,\dots]_{{\mathrm {deg}}=n}$ is exactly $\mathcal
I$, therefore
\begin{multline*}
\dim \Z_2[w_1,\dots]_{{\mathrm {deg}} = n}/(\mathcal D \+ 
\mathcal I_{}) = \dim \left(
\Z_2[w_1,\dots]_{{\mathrm {deg}} = n}/\mathcal I_{} \right)/
\left( \mathcal D/\mathcal D\cap\mathcal I_{} \right) =\\
= \dim \left( \Z_2[w_1,\dots]_{{\mathrm {deg}} = n}/\mathcal I_{} \right)
- \dim \im \varrho|_{\mathcal D} 
= \dim \im \varrho_n - \dim \im \varrho|_{\mathcal D}.
\end{multline*}
To calculate the image of $\mathcal D$  under $\varrho$, we use the Wu
formulas. For $u \geq 2$
\begin{align*}
\varrho \circ \operatorname{Sq} w_u &= \varrho \left( \sum_{d=0}^{u} \sum_{j=0}^{d}
\binom{u-d+j-1}{j} w_{u+j}w_{d-j} \right) =\cr
&= \sum_{d=0}^{u} \left( \binom{u-1}{d} xt^{u+d-2} + \sum^{d-2}_{j=0}
\binom{u-d+j-1}{j} x^2 t^{u+d-4} \right) =\cr
&= \sum_{d=0}^{u} \left( \binom{u-1}{d} xt^{u+d-2} + \binom{u-2}{d-2} x^2
t^{u+d-4} \right) =\cr
&= xt^{u-2}(t+1)^{u-1}+x^2t^{u-2}(t+1)^{u-2} = xt^{u-2}(t+1)^{u-2}(x+t+1).
\end{align*}
Similarly,
$$
\varrho(w^{-1}) = (1+x(1+t+t^2+\dots))^{-1} = \frac{t+1}{x+t+1},
$$
hence for $s,m \geq 0$, $s+2m+2 \leq n-1$ and a monomial $p \in  
\Z_2[w_2,\ldots,w_{n-1}]$ of degree $2m+2+s$ and length $\vert p \vert =
m+1$ the corresponding element $[w^{-1} \cdot \operatorname{Sq} p]_{{\mathrm {deg}}=n}$ of $\mathcal D$ is mapped by $\varrho$ to
\begin{align*}
R(s, m) := \varrho( [w^{-1}\operatorname{Sq}p]_{{\mathrm {deg}}=n} )&= \left[ \frac{t+1}{x+t+1} x^{m+1} t^s
(t+1)^s (x+t+1)^{m+1} \right]_{{\mathrm {deg}}=n} =\\
&= \left[ (t+1)^{s+1} t^s (x+t+1)^m x^{m+1} \right]_{{\mathrm {deg}}=n}.
\end{align*}
Note that if a monomial $p$ is divisible by $w_1$, then $\operatorname{Sq}
p$ and $[w^{-1}\operatorname{Sq}p]_{{\mathrm {deg}}=n}$ are also divisible
by $w_1$, consequently $\varrho( [w^{-1}\operatorname{Sq}p]_{{\mathrm
{deg}}=n} ) = 0$.

\par

Separating the expression for $R(s,m)$ by degree of $x$ we get
$$R(s,m) = \left[ \sum_{i=0}^m \binom{m}{i} x^{m+1+i} t^s (t+1)^{s+1+m-i} \right]_{{\mathrm {deg}}=n}
=$$
$$= \sum_{i=0}^m \binom{m}{i} \binom{s+1+m-i}{n-2m-2i-2-s} x^{m+1+i}
t^{n-2m-2i-2}.$$
Recall that we use the convention that $\binom{\al}{\be}=0$ if $\be < 0$ or
$\al < \be$. Note that the binomial coefficient in the above sum is equal to $0$ if
the exponent of $t$ is negative.
When $p$ is the constant $1$, 
we have 
\begin{align*}
R_0 &:= \varrho( [w^{-1}\operatorname{Sq} 1 ]_{{\mathrm {deg}}=n} ) = \left[ \frac{t+1}{x+t+1} \right]_{\operatorname{deg} = n} = \left[
\frac{1}{1+\frac{x}{1+t}} \right]_{\operatorname{deg} = n} =\\
&= \sum_{1 \leq j \leq n/2} x^j \left[ (1+t)^{-j} \right]_{\operatorname{deg} =
n-2j} = \sum_{1 \leq j \leq n/2} \binom{n-j-1}{n-2j} x^j t^{n-2j}.
\end{align*}

\par

Let $V_R$ denote the set $\{ (s,m) : s,m \geq 0, s+2m+2 \leq n-1 \}$.
Therefore $\varrho (\mathcal D)$ is equal to the linear span of the
set $\{ R(s,m) : (s,m) \in V_R \} \cup \{ R_0\}$
in $\Z_2[x,t]$. Denote the linear span of
$\{ R(s,m) : (s,m) \in V_R \}$ by $\mathcal R_+$
and denote $\varrho (\mathcal D)$ by $\mathcal R$.
\subsubsection{The dimension of  $\mathcal R$}

The space $\mathcal R$ is contained in $\im \varrho_n$, that is, $\mathcal R$
is contained in
$\langle xt^{n-2}, \dots, x^{\lfloor n/2 \rfloor}t^{n-2\lfloor n/2
\rfloor} \rangle$. We will check whether the monomials $xt^{n-2}$, $\dots$,
$x^{\lfloor n/2 \rfloor}t^{n-2\lfloor n/2 \rfloor}$ are contained in
$\mathcal R$ separately in the cases of odd and even $n$. We will use the
criterion of \cite{Gla}
cited above, which states that $\binom{b}{a}$, $0 \leq a \leq b$, is
even if and only if there is a binary position at which $a$ has the digit
$1$ and $b$ has the digit $0$.
\begin{lem}\label{binomlem}
The binomial coefficients $\binom{K-p}{p}$ with $0< p \leq \frac{K}{2}$ are
all even if and only if $K+1$ is a power of $2$.
\end{lem}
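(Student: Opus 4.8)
The plan is to convert the parity of $\binom{K-p}{p}$ into a statement about binary digits using the criterion of \cite{Gla} already invoked above, and then prove the two implications separately. Writing $k_0,k_1,\dots$ for the binary digits of $K$, the cited criterion says that $\binom{K-p}{p}$ is odd precisely when $p$ is a \emph{submask} of $K-p$, i.e.\ every binary position carrying a $1$ in $p$ also carries a $1$ in $K-p$. Thus ``all $\binom{K-p}{p}$ with $0<p\le K/2$ are even'' is equivalent to ``no $p$ with $0<p\le K/2$ is a submask of $K-p$'', and I will show that this holds exactly when $K=2^t-1$, i.e.\ when $K+1$ is a power of $2$.

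For the implication assuming $K+1=2^t$: here $K=2^t-1$ has binary expansion consisting of $t$ ones in positions $0,\dots,t-1$. Any $p$ with $0<p\le K/2$ satisfies $p<2^{t-1}$, so its $1$-bits lie in positions $0,\dots,t-2$; subtracting such a $p$ from the all-ones block $K$ requires no borrow, whence at each position $i$ one has $(K-p)_i=1-p_i$. Consequently, at every position where $p$ has a $1$, the number $K-p$ has a $0$, so the nonzero $p$ is not a submask of $K-p$ and $\binom{K-p}{p}$ is even. This direction is routine.

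For the converse I argue by contraposition: assuming $K+1$ is not a power of $2$, I construct an explicit offending $p$. Let $j$ be the least index with $k_j=0$; by minimality $k_0=\dots=k_{j-1}=1$, and were all higher digits $0$ as well we would have $K=2^j-1$, contradicting the hypothesis, so some digit above position $j$ equals $1$ and hence $K\ge 2^{j+1}$. I take $p=2^j$, which satisfies $0<p\le K/2$ because $2^{j+1}\le K$. The key computation is that $K-2^j$ has $1$'s in all positions $0,\dots,j$: writing $K=H+(2^j-1)$ with $H$ a multiple of $2^{j+1}$ and $H\ge 2^{j+1}$, one gets $K-2^j=(H-2^{j+1})+(2^{j+1}-1)$, a sum of a multiple of $2^{j+1}$ and the all-ones block $2^{j+1}-1$. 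In particular position $j$ of $K-p$ carries a $1$, so $p=2^j$ is a submask of $K-p$ and $\binom{K-p}{p}$ is odd, as required.

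Nothing here is genuinely deep; the only point demanding care—and the step I expect to have to state most carefully—is the borrow bookkeeping in the two subtractions $K-p$, since the digitwise submask reasoning is valid only once one has checked that these subtractions propagate no borrow into the positions occupied by $p$. Both borrow computations reduce to the single observation that a block of low-order $1$'s absorbs the relevant borrow, which I will record once and reuse in each direction.
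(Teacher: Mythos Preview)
Your proof is correct and follows essentially the same approach as the paper: both directions are argued via Glaisher's criterion, with the converse exhibiting an explicit $p=2^h$ making $\binom{K-p}{p}$ odd. The only cosmetic difference is in the choice of that position—the paper takes any $h$ where the bit pattern $\dots 10\dots$ occurs in $K$ (so $k_h=0$, $k_{h+1}=1$), while you take the least $j$ with $k_j=0$; both choices work for the same reason, and your borrow bookkeeping is correct (though in the forward direction the restriction $p<2^{t-1}$ is harmless but unnecessary, since complementarity $(K-p)_i=1-p_i$ holds for all $0\le p\le K=2^t-1$).
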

\begin{proof}
If $K+1$ is a power of $2$, then $K$ written in binary contains only digits
$1$, hence $p$ and $K-p$ are complementary to each other. Since $p \neq 0$, 
there is a digit $1$ in its binary representation, thus in
the same position $K-p$ has digit $0$ and the criterion of \cite{Gla}
implies that $\binom{K-p}{p}$ is even.
Conversely, if $K$ contains the bit pattern $...10...$ at position $h$, say,
then $\binom{K-2^h}{2^h}$ is odd by the same criterion.
\end{proof}
Due to our convention, this result implies that  $K+1$ is a power of $2$ 
if and only if
the binomial coefficients $\binom{K-p}{p}$ are even for all $p > 0$.
\subsubsection{Case of $n$ even}
For $n =2$, we have $R_0 = x$, hence 
$\im \varrho_n = \langle x \rangle = \mathcal R$.

\par

For $n >2$, note that the  monomial $xt^{n-2}$ occurs as 
a summand in $R(s,m)$ only in the case
$m=0$ and  $R(s,0)=\binom{s+1}{n-2-s} xt^{n-2}$.
If $n \geq 3$, then for $s = n-3$ we have $0 < n-2-s
\leq (n-1)/2$
and $(s,0) \in V_R$.
 If $n$ is not a power of $2$, then we apply Lemma~\ref{binomlem} with $K = n-1$ and $p= n-2-s$,
and obtain that the coefficient of $xt^{n-2}$ in $R(s,0)$ is not $0$. If $n$ is a power of
$2$, then Lemma~\ref{binomlem} with the same choice of $K = n-1$ and $p =
n-2-s$ implies that $R(s,0)=0$ for all $(s,0) \in V_R$. Hence $xt^{n-2} \in \mathcal R_+$ if and only if $n$ is not a power
of $2$. Note that if $xt^{n-2} \not\in \mathcal R_+$, then $xt^{n-2}$ does not
appear as a summand in any elements of $\mathcal R_+$.

\par

If $n=4$, then $V_R = \{ (0,0), (1, 0) \}$. We have
$R(0,0) = 0$ as one can check easily and
above we showed that $R(1,0) = 0$, thus $\mathcal R_+$ consists only of the zero
element. 

\par  

Next consider $m=1, 3, \dots, \frac{n-4}{2}$ for $n \geq 4$ if $4 \nmid n$
and $m=1, 3, \dots, \frac{n-6}{2}$ for $n \geq 6$ if $4\mid n$.
Choosing $s=n-2m-3$ gives us $(s,m) \in V_R$ and $R(n-2m-3,m) = \binom{m}{0} \binom{n-m-2}{1}
x^{m+1}t^{n-2m-2}$. Since $n-m-2$ is odd, $R(n-2m-3,m) =
x^{m+1}t^{n-2m-2}$. Therefore $x^{m+1}t^{n-2m-2}$ is in $\mathcal R_+$.
Setting $s=n-2m-4$ gives $(s,m) \in V_R$ and
$$
R(n-2m-4,m) =  \binom{m}{0} \binom{n-m-3}{2} x^{m+1}t^{n-2m-2} +
\binom{m}{1} \binom{n-m-4}{0} x^{m+2}t^{n-2m-4}.
$$
The first summand is in $\mathcal R_+$ by the argument above, thus so
is the second one, which equals to $x^{m+2}t^{n-2m-4}$ due to $m$ being odd.
Therefore we obtain that for $n \geq 4$, $4 \nmid n$ the monomials $x^2
t^{n-4}, \dots, x^{n/2}$ are in $\mathcal R_+$, and for $n \geq 6$, $4 \mid
n$ the monomials $x^2 t^{n-4}, \dots, x^{(n-2)/2} t^2$ are in $\mathcal
R_+$.

\par

The only monomial not covered by the cases detailed above is $x^{n/2}$ in
the case when $n$ is divisible by $4$ and $n \geq 6$. Since all the other monomials either
belong to $\mathcal R_+$ or do not appear as summands in any $R(s,m)$ for $(s,m) \in V_R$, we have that
$x^{n/2} \in \mathcal R_+$ if and only if $x^{n/2}$ occurs as a summand in
an $R(s,m)$, $(s,m) \in V_R$. The coefficient of $x^{n/2}$ in $R(s,m)$ 
can  be nonzero only when
$n=2m+2i+2$ for some $0 \leq i \leq m$ and $s=0$, and then 
the coefficient is
$$
\binom{m}{i} \binom{s+1+m-i}{n-2m-2i-2-s} = \binom{m}{\frac{n}{2}-m-1}
\binom{2m+2-\frac{n}{2}}{0} = \binom{m}{\frac{n}{2}-m-1},
$$
and we have $(s,m) \in V_R$ if and only if $m \geq i \geq 1$, $n \geq 6$.
By Lemma~\ref{binomlem}, $\binom{m}{\frac{n}{2}-m-1}$ is even for all possible $m$
exactly when $\frac{n}{2}$ is a power of $2$.

\par

To summarize, when $n$ is even, the set $\{ R(s,m) : (s,m) \in V_R \}$ generates the space
$\im \varrho_n = \langle
xt^{n-2}, \dots, x^{n/2} \rangle$
if $n$ is not a power of $2$.
If $n$ is a power of $2$, then we know that $\mathcal R_+$ is spanned by all
monomials in $\im \varrho$ of degree $n$ except for $xt^{n-2}$ and
$x^{n/2}$. Let us check the coefficients of $xt^{n-2}$ and $x^{n/2}$ in $R_0
= \binom{n-2}{n-2} xt^{n-2} + \dots + \binom{n-\frac{n}{2}-1}{0} x^{n/2}$.
Both of their coefficients are $1$, hence $\mathcal R = \langle \mathcal R_+
\cup \{ R_0 \} \rangle$ has codimension $1$ in $\im \varrho_n$ and 
 $\im \varrho_n / \mathcal R$ is spanned by $xt^{n-2} + \mathcal R =
x^{n/2} + \mathcal R$.

\subsubsection{Case of $n$ odd}

 Let us call a  monomial $x^h t^{n-2h}$ {\it admissible} if
$h$ is not a power of $2$.
\begin{lem}\label{rek}
For an admissible  monomial $x^h t^{n-2h}$ with $1 \leq h \leq \frac{n-1}{2}$ and 
$2^u < h < 2^{u+1}$, where $u \geq 0$, there exist an integer
$r(h)$, a set of integers $E_h$ 
with $2^u \leq \al \leq h-1$ for all $\al \in E_h$,
and an element
 $R_h \in \mathcal R_+$ such that 
 $R_h + x^h t^{n-2h}$
is a linear combination of monomials $x^{\al} t^{n-2\al}$, $\al \in E_h$.
\end{lem}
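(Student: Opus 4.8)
The plan is to produce $R_h$ as a single generator $R(s,m) \in \mathcal R_+$ for a well-chosen pair $(s,m) \in V_R$, rather than as a combination of several generators. Recall that
$$R(s,m) = \sum_{i=0}^m \binom{m}{i}\binom{s+1+m-i}{n-2m-2i-2-s}\, x^{m+1+i} t^{n-2m-2i-2},$$
so that, after reindexing by the $x$-exponent $h' = m+1+i$, the monomials occurring in $R(s,m)$ are exactly the $x^{h'} t^{n-2h'}$ with $m+1 \leq h' \leq 2m+1$, and the coefficient of $x^{h'} t^{n-2h'}$ is $\binom{m}{h'-m-1}\binom{s+2m+2-h'}{n-2h'-s}$. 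I would then set
$$m := 2^u - 1, \qquad r(h) := n - 2h, \qquad R_h := R(r(h), m).$$
The motivation is twofold: taking $m = 2^u - 1$ forces the smallest $x$-exponent occurring in $R_h$ to be $m+1 = 2^u$, which is exactly the lower bound demanded for $E_h$, and it makes every ``horizontal'' binomial $\binom{m}{h'-m-1} = \binom{2^u-1}{h'-2^u}$ odd by the criterion of \cite{Gla} cited above; taking $s = n-2h$ will make the ``vertical'' binomial collapse, as explained below.

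First I would check that $(r(h), m) \in V_R$, i.e.\ that $r(h) \geq 0$ and $r(h) + 2m + 2 \leq n-1$. The hypothesis $h \leq (n-1)/2$ gives $r(h) = n - 2h \geq 1 > 0$, and the inequality $r(h) + 2m + 2 = n - 2h + 2^{u+1} \leq n-1$ is equivalent to $h \geq 2^u + 1$, which holds because $2^u < h$. Hence $R_h \in \mathcal R_+$, and by construction its monomials all have $x$-exponent at least $m+1 = 2^u$.

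Next I would compute the coefficients. With $s = n - 2h$ the lower index of the vertical binomial at $x^{h'} t^{n-2h'}$ becomes $n - 2h' - s = 2(h - h')$. For $h' > h$ this is negative, so by our convention the coefficient vanishes; for $h' = h$ it is $0$, so the vertical binomial equals $1$ and, the horizontal binomial being odd, the coefficient of $x^h t^{n-2h}$ is $1$; for $2^u \leq h' < h$ the coefficient reduces to $\binom{n-2h+2^{u+1}-h'}{2(h-h')} \bmod 2$. Consequently $R_h$ has top term $x^h t^{n-2h}$ with coefficient $1$, while every remaining nonzero term is of the form $x^{\alpha} t^{n-2\alpha}$ with $2^u \leq \alpha \leq h-1$. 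Setting $E_h$ to be the set of such $\alpha$ for which this coefficient is odd, the two copies of $x^h t^{n-2h}$ cancel over $\Z_2$ and $R_h + x^h t^{n-2h} = \sum_{\alpha \in E_h} x^\alpha t^{n-2\alpha}$ is the required linear combination.

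The only genuine labor is the binomial bookkeeping of the last step; the real content is hidden in the two choices. The value $m = 2^u - 1$ simultaneously pins the minimal $x$-exponent at $2^u$ and trivializes the horizontal binomials via the base-$2$ criterion, while $s = n-2h$ uses the negative-lower-index convention to annihilate automatically all terms of $x$-degree above $h$ and to normalize the leading coefficient to $1$. I expect the point needing the most care to be the membership $(r(h), m) \in V_R$, since that is precisely where both hypotheses $h \leq (n-1)/2$ and $2^u < h$ are consumed; note also that admissibility of $x^h t^{n-2h}$ is built into the assumption $2^u < h < 2^{u+1}$, which is exactly what guarantees $h$ lies strictly between consecutive powers of $2$. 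Everything else is a direct application of the mod-$2$ binomial criterion.
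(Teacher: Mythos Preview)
Your proof is correct. Both you and the paper take $R_h$ to be a single generator $R(s,m)$ with the same choice $s=n-2h$; the mechanism by which the vertical binomial $\binom{\cdot}{2(h-h')}$ kills all terms with $h'>h$ and normalizes the $x^h$-coefficient is identical. The difference lies in the second parameter: the paper sets $m=h-1-2^{r}$ where $2^{r}$ is the highest power of $2$ dividing $h$, while you set $m=2^u-1$. Your choice is slicker in two respects: it makes every horizontal binomial $\binom{2^u-1}{h'-2^u}$ odd at once, so no separate parity check is needed at the top term, and it pins the minimal $x$-exponent directly at $2^u$, which is exactly the lower bound required of $E_h$. The paper's choice yields a narrower window $E_h=\{h-2^r,\dots,h-1\}$, but this extra sharpness is never exploited downstream, since the induction in Proposition~\ref{thm:power2} only uses the bounds $2^u\le\alpha\le h-1$.
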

\begin{proof}
Take the greatest $r = r(h) \geq 0$ for which $2^r \mid h$. 
Note that 
\begin{equation}\label{alsokorl}
h - 1, \ldots, h -2^{r} \geq 2^u
\end{equation}
since $h-2^r$ has the same binary form as $h$ except for the least
significant digit $1$, which is changed to $0$.
Also note that $h
\equiv 2^r \operatorname{ mod } 2^{r+1}$ and $h \geq 2^{r+1} +2^r$. 
Consider $R(n-2h,h-1-2^r)$. This polynomial is in $\mathcal R_+$ since
$(n-2h,h-1-2^r) \in V_R$.
Indeed,  $h-1 \geq 2^{r+1} >
2^r$, $2h<n$ and $n-2h+2(h-1-2^r)+2 = n-2^{r+1} < n$. We have
\begin{multline*}
R(n-2h,h-1-2^r) = \binom{h-1-2^r}{0} \binom{n-h-2^r}{2^{r+1}}
x^{h-2^r}t^{n-2h+2^{r+1}} + \cdots \\
  \cdots +\binom{h-1-2^r}{2^r} \binom{n-h-2^{r+1}}{0} x^h t^{n-2h}.
\end{multline*}
The coefficient of the last monomial of this sum is nonzero because
$h \geq 2^{r+1}+2^r$, $n-h-2^{r+1} > n-2h > 0$, and $h-2^r-1
\equiv -1 \operatorname{ mod } 2^{r+1}$ implies that the $r$-th binary digit
of $h-2^r-1$ is $1$. 

Let $R_h$ be $R(n-2h,h-1-2^r)$
and let $E_h$ be $\{h - 1, \ldots, h -2^{r} \}$.
 Then, by (\ref{alsokorl}) we have the statement.
\end{proof}

\begin{prop}\label{thm:power2}
Let $1 \leq h \leq \frac{n-1}{2}$ and assume that $u \geq 0$ is the greatest
integer such that $2^u \leq h$. Then $x^h t^{n-2h} \in \mathcal R_+$ or $x^h
t^{n-2h} + x^{2^u}t^{n-2^{u+1}} \in \mathcal R_+$ holds.
\end{prop}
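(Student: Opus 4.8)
The plan is to prove the proposition by strong induction on $h$, taking Lemma~\ref{rek} as the engine that drives the inductive step. What the statement really asserts is a dichotomy: modulo $\mathcal R_+$, every monomial $x^h t^{n-2h}$ collapses either to $0$ or to the single ``dyadic representative'' $x^{2^u} t^{n-2^{u+1}}$, where $2^u$ is the largest power of $2$ with $2^u \le h$. First I would dispose of the base case, which is when $h$ is itself a power of $2$: then $2^u = h$, so the element $x^h t^{n-2h} + x^{2^u} t^{n-2^{u+1}}$ is simply $0 \in \mathcal R_+$, and the second alternative holds. This also settles $h=1$.

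For the inductive step I would assume $h$ is not a power of $2$, so that $2^u < h < 2^{u+1}$, and invoke Lemma~\ref{rek}. It furnishes $R_h \in \mathcal R_+$ and a set $E_h$ with $2^u \le \alpha \le h-1$ for every $\alpha \in E_h$ such that
\begin{equation*}
x^h t^{n-2h} = R_h + \sum_{\alpha \in E_h} c_\alpha\, x^\alpha t^{n-2\alpha}
\end{equation*}
for some coefficients $c_\alpha \in \Z_2$ (over $\Z_2$ the identity $R_h + x^h t^{n-2h} = \sum_{\alpha} c_\alpha x^\alpha t^{n-2\alpha}$ of the lemma rearranges to this). Thus $x^h t^{n-2h}$ is congruent modulo $\mathcal R_+$ to a $\Z_2$-combination of the monomials $x^\alpha t^{n-2\alpha}$, $\alpha \in E_h$, and each such $\alpha$ satisfies $1 \le \alpha < h \le \frac{n-1}{2}$, so the induction hypothesis applies to every term.

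The crucial observation, on which the whole argument turns, is that every $\alpha \in E_h$ lies in the same dyadic block as $h$: from $\alpha \le h-1$ and $h < 2^{u+1}$ we get $\alpha < 2^{u+1}$, while $\alpha \ge 2^u$, so the largest power of $2$ not exceeding $\alpha$ is again $2^u$, i.e. $u_\alpha = u$. Consequently the induction hypothesis sends each $x^\alpha t^{n-2\alpha}$ to either $0$ or $x^{2^u} t^{n-2^{u+1}}$ modulo $\mathcal R_+$ --- the \emph{same} representative for all $\alpha$. Summing in $\Z_2$, the combination $\sum_{\alpha \in E_h} c_\alpha x^\alpha t^{n-2\alpha}$ is therefore congruent to either $0$ or $x^{2^u} t^{n-2^{u+1}}$, and this dichotomy passes to $x^h t^{n-2h}$, closing the induction.

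I expect the genuine difficulty to have already been absorbed into Lemma~\ref{rek}, which is what confines the exponents of $x$ appearing in the reduction to the interval $[2^u, h-1]$; the work remaining here is organizational. The main things to watch will be the well-foundedness and range bookkeeping of the induction --- verifying that each reduction target $\alpha$ stays in $[1, \frac{n-1}{2}]$ and strictly below $h$ --- together with the dyadic-block constancy $u_\alpha = u$, without which the various applications of the induction hypothesis would produce different power-of-two monomials and the dichotomy would fail to close up.
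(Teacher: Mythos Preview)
The proposal is correct and follows essentially the same approach as the paper: both use Lemma~\ref{rek} to reduce $x^h t^{n-2h}$ to a combination of monomials with $x$-exponents in $[2^u, h-1]$, and then iterate until only $x^{2^u}t^{n-2^{u+1}}$ (or nothing) remains. The only cosmetic difference is that you phrase the iteration as strong induction on $h$, while the paper describes it as an explicit descent procedure that terminates because the exponents strictly decrease while remaining bounded below by $2^u$.
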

\begin{proof}
If $h$ is a power of $2$, then the statement obviously holds.
Hence we can assume that $x^h t^{n-2h}$ is admissible  and $2^u < h < 2^{u+1}$.
Apply Lemma~\ref{rek} to $x^h t^{n-2h}$, then
 $x^{h} t^{n-2h} + R_h$ is a 
 linear combination of  monomials, where the exponents $\al \in E_h$ 
 satisfy $2^u \leq \al \leq h-1$. Let $E'_h = 
 \{ \al \in  E_h: \al > 2^u \}$.
 
  Again, if  $h-1 > 2^u$ and $E'_h \neq \emptyset$, then apply Lemma~\ref{rek} 
 to the admissible monomials of the linear combination $x^{h} t^{n-2h} + R_h$,
 then we obtain  that $x^{h} t^{n-2h} + R_h + 
 \sum_{\al \in E'_h} R_{\al}$ is a linear combination
 of monomials whose degree in $x$ is at least $2^u$ and smaller than $h-1$.
 Again, if $h-2 > 2^u$ and there
 are resulting admissible monomials in the last linear combination,
  then apply Lemma~\ref{rek}, and
 iterate this procedure until 
 we get that $x^{h} t^{n-2h}  + \tilde R = \ep x^{2^u} t^{n-2^{u+1}}$,
 where $\tilde R \in \mathcal R_+$ and $\ep \in \{0,1\}$.
 Note that the procedure finishes in a finite number of steps since at each step 
 the linear combination of the next step has smaller degrees  in $x$, while a common
 lower limit for the degrees of $x$ is $2^u$.
 This proves our claim.
\end{proof}

\begin{prop}\label{thm:choice}
Let $1 \leq h \leq \frac{n-1}{2}$.
For every $r \geq 0$, if
$x^h t^{n-2h} \not\in \mathcal R_+$ and
$n-2h \geq 2^r-1$, then
$2^r | n-h+1$.
\end{prop}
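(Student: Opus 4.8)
The plan is to prove the contrapositive: fixing $r$ with $n-2h \geq 2^r-1$ and assuming $2^r \nmid n-h+1$, I will show $x^h t^{n-2h} \in \mathcal R_+$. It is convenient to abbreviate $N = n-h+1$, so that the running hypothesis reads $N-h = n-2h+1 \geq 2^r$ together with $2^r \nmid N$. Since $2^0=1$ always divides $N$, I may assume $r \geq 1$.

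First I would isolate a family of generators producing $x^h t^{n-2h}$ as their \emph{lowest} $x$-power. Taking $m=h-1$ and $s=n-2h-a$ in the explicit expansion of $R(s,m)$ given just before Lemma~\ref{rek}, a short substitution yields
\[
R(n-2h-a,\,h-1) = \binom{n-h-a}{a}\, x^h t^{n-2h} + \sum_{i\geq 1}\binom{h-1}{i}\binom{n-h-a-i}{a-2i}\, x^{h+i}t^{n-2h-2i},
\]
valid whenever $(n-2h-a,h-1)\in V_R$, where every term after the first carries a strictly larger power of $x$ (and the inner binomial vanishes once $a<2i$, so the sum is finite). Thus this single generator realizes $x^h t^{n-2h}$ modulo monomials of higher $x$-degree, with leading coefficient $\binom{n-h-a}{a}=\binom{N-1-a}{a}$.

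The heart of the argument is the choice $a=2^v$, where $2^v$ is the exact power of $2$ dividing $N$; since $2^r \nmid N$ we have $v<r$, and $N-h\geq 2^r$ then guarantees $2^v \leq n-2h$, so that $(n-2h-2^v,h-1)\in V_R$. Writing $N=2^v(2b+1)$ one computes $N-1-2^v = 2^{v+1}b-1$, whose binary expansion ends in a block of $1$'s through position $v$ as soon as $b\geq 1$; by the criterion of \cite{Gla} (equivalently Lemma~\ref{binomlem}) this makes $\binom{N-1-2^v}{2^v}$ odd. Hence the leading coefficient is $1$, and modulo $\mathcal R_+$ the target $x^h t^{n-2h}$ becomes congruent to a $\Z_2$-combination of the tail monomials $x^{h+i}t^{n-2h-2i}$ with $1\leq i\leq 2^{v-1}$. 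I would then close by a downward induction on $h$: each tail monomial has strictly larger $x$-degree (larger $h'=h+i$, already treated), and I would certify via the contrapositive of the inductive hypothesis that it lies in $\mathcal R_+$ by exhibiting, for $N'=n-h'+1=N-i$, an exponent $r'$ with $2^{r'}\leq N'-h'$ and $2^{r'}\nmid N'$. For the largest values of $h$ the tail is empty (its monomials have negative $t$-exponent), giving a clean base case; the degenerate subcase where $N$ is itself a power of $2$ (so $b=0$ above), together with the small-$h$ values admitting no generator with $x^h$ lowest, I would dispatch separately by reducing to power-of-$2$ monomials through Proposition~\ref{thm:power2} and Lemma~\ref{rek}.

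The main obstacle I anticipate is precisely this inductive bookkeeping. While the leading coefficient is governed cleanly by the $2$-adic valuation of $N$, the forced tail monomials $x^{h+i}t^{n-2(h+i)}$ need not each be individually in $\mathcal R_+$, and one must trace the binary digits of $N$ and of the shifted values $N-i$ carefully — applying the Glaisher/Kummer carry criterion repeatedly — to guarantee that the particular combination handed to us does reduce into $\mathcal R_+$ rather than leaving an obstruction. An alternative to the induction would be to solve the (triangular in $i$ versus $a$) linear system $\sum_a \lambda_a R(n-2h-a,h-1)=x^h t^{n-2h}$ directly, cancelling the whole tail at once; this avoids the inductive hypothesis but demands the same delicate control of the binomial coefficients. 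Dovetailing the generic case with the power-of-$2$ degeneration is the second point requiring care.
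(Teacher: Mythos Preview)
Your strategy is essentially the paper's, repackaged. Both arguments hinge on the single relation $R(n-2h-a,\,h-1)$ with $a=2^v$, where $2^v\parallel N=n-h+1$; this produces $x^ht^{n-2h}$ with odd leading coefficient plus a short tail of length $\le 2^{v-1}$. The only difference is the induction scheme: the paper inducts on $r$, which automatically pins down $v=r-1$ at the inductive step (since $P(r-1)$ gives $2^{r-1}\mid N$ while $2^r\nmid N$), whereas you propose to induct downward on $h$.

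Your anticipated obstacle dissolves, and you should simply carry the bookkeeping out. For each tail monomial $x^{h+i}t^{n-2h-2i}$ with $1\le i\le 2^{v-1}$, the uniform choice $r'=v$ works: since $2^v\mid N$ and $0<i<2^v$ we get $2^v\nmid N'=N-i$, and $n-2(h+i)\ge (2^r-1)-2^v\ge 2^v-1$ because $v<r$. Hence every tail monomial lies individually in $\mathcal R_+$ by the inductive hypothesis at $h'=h+i>h$; no subtle cancellation among them is required. The ``degenerate subcase'' $N=2^v$ is vacuous: then $2^r\nmid N$ forces $r>v$, yet $n-2h=2^v-h-1<2^r-1$, so the hypothesis of the proposition is never met. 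Likewise there is no ``small-$h$'' issue, since $m=h-1\ge 0$ and $s=n-2h-2^v\ge 0$ are always available under the hypothesis.

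The paper's induction on $r$ is marginally cleaner only because the inductive hypothesis $P(r-1)$ simultaneously fixes $v=r-1$ and disposes of the tail in one stroke; your version reaches the same conclusion once the check above is written down.
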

\begin{proof} The proof will proceed by induction on $r$, with $r=0$ as the
trivial starting case: $1|n-h+1$ always holds. 

\par

Let $r \geq 1$ and suppose that the statement holds for $r-1$. Let $h$ be
such that $x^h t^{n-2h} \not\in \mathcal R_+$ and $n-2h \geq 2^r-1$. Assume
indirectly that $2^r \nmid n-h+1$. 
We have $n-2h \geq 2^r-1 > 2^{r-1} -1$ hence
by the induction hypothesis we have
$n-h+1 \equiv 2^{r-1} \mod 2^r$.

\par       

Consider 
\begin{multline}\label{osszeg}
R(n-2h-2^{r-1},h-1) = \binom{h-1}{0}
\binom{n-h-2^{r-1}}{2^{r-1}} x^h t^{n-2h} + \cdots \\  \cdots + \binom{h-1}{\lfloor
2^{r-2} \rfloor} \binom{n-2h-2^{r-2}}{2^{r-1} - 2\lfloor 2^{r-2} \rfloor}
x^{h+\lfloor 2^{r-2} \rfloor} t^{n-2h-2\lfloor 2^{r-2} \rfloor},
\end{multline}
where taking the integral part of $2^{r-2}$ is only needed to handle the
case of $r=1$. 
Since $h \geq 1$, $n-2h-2^{r-1} \geq 2^r-1-2^{r-1} =
2^{r-1}-1 \geq 0$ and $n-2h-2^{r-1}+2(h-1)+2 = n-2^{r-1} < n$,
we have that $R(n-2h-2^{r-1},h-1) \in \mathcal R_+$.

\par

For $y = h+1, \ldots, h+\lfloor 2^{r-2} \rfloor$ we have
$n-2y \geq n-2(h+\lfloor 2^{r-2} \rfloor) \geq 2^r - 1-2^{r-1} = 2^{r-1} -1$, and
since by the induction hypothesis $2^{r-1} \mid n-h+1$,
none of the values $n-y+1$ can be
divisible by $2^{r-1}$. Applying the induction hypothesis again gives us
that all of the monomials in \eqref{osszeg} except possibly the first one
are in $\mathcal R_+$. But the coefficient of the first term is nonzero:
$\binom{h-1}{0} = 1$ and $\binom{n-h-2^{r-1}}{2^{r-1}} = 1$ by \cite{Gla}
since $n-h-2^{r-1} \equiv 2^r-1 \operatorname{ mod } 2^r$ has the binary
digit $1$ at the only location where $2^{r-1}$ has a $1$. Therefore
$R(n-2h-2^{r-1},h-1)+x^ht^{n-2h} \in \mathcal R_+$ 
and consequently $x^hy^{n-2h} \in \mathcal R_+$, finishing the
proof.
\end{proof}

We apply Proposition \ref{thm:choice} to monomials of the form $x^{2^u}
t^{n-2^{u+1}}$, $u \geq 0$, $n$ is odd and $2^{u+1} \leq n$, with the choice of 
 $r \geq 0$  so that $2^{r+1} < n < 2^{r+2}$. Note that
 $u \leq r$ due to $2^u
\leq \frac{n-1}{2} < 2^{r+1}$.  We get that if $x^{2^u} t^{n-2^{u+1}} \not\in \mathcal
R_+$, then at least one of the following has to hold:
\begin{enumerate}[\rm (a)]
\item $2^r \mid n-2^u+1$. We know that $n-2^u$ is at least $n-2^r > 2^{r+1} - 2^r =
2^r$, and on the other hand $n-2^u$ is at most $n-1 < 2^{r+2} -1$. There are
only two integers $i$
 in the open interval $(2^r, 2^{r+2}-1)$  which satisfy
the divisibility condition $2^r | i+1$, namely $2^{r+1}-1$ and $3 \cdot 2^r-1$. Hence $n-2^u$ is either $2^{r+1}-1$ or $3 \cdot 2^r-1$.
\item $n-2^{u+1} < 2^r-1$. Then  $2^{u+1} > n-2^r+1 > 2^r$ 
since $2^{r+1} < n$, and $u \leq r$ implies that $u=r$.
\end{enumerate}

We claim that in the case $n-2^u = 3 \cdot 2^r-1$ of (a) the monomial
$x^{2^u}t^{n-2^{u+1}}$ is actually in $\mathcal R_+$. Indeed, check the
statement of Proposition \ref{thm:choice} for $r+1$. Then $2^{r+1} \nmid n-2^u+1 = 3
\cdot 2^r$, and $n-2^{u+1} = 3 \cdot 2^r - 1 - 2^u \geq 2^{r+1} - 1$
since $2^u \leq 2^r$, therefore we have 
 $x^{2^u}t^{n-2^{u+1}} \in \mathcal R_+$.

\par       

Hence if $x^{2^u} t^{n-2^{u+1}} \not\in \mathcal R$, then we are left with
two possibilities:
\begin{enumerate}[\rm (a)]
\item
$n=2^{r+1}+2^u-1$,
\item
$n-2^{u+1} < 2^u-1$ and
$2^{u+1} < n < 2^{u+2}$. 
\end{enumerate}

In the case (b), note that if $x^{2^u}t^{n-2^{u+1}} \not\in \mathcal
R_+$, then Proposition \ref{thm:choice} implies that for any $0 \leq r' \leq u$ either
\begin{enumerate}[\rm (i)]
\item
$n-2^{u+1} < 2^{r'}-1$ or 
\item
$n-2^u \equiv 2^{r'} -1  \mod 2^{r'}$. Due to ${r'} \leq u$ this condition is
equivalent to $n-2^{u+1} \equiv 2^{r'}-1 \mod 2^{r'}$.
\end{enumerate}

In the case (b) choose $r'$ to satisfy $2^{r'} \leq n-2^{u+1} < 2^{r'+1}$. This value of $r'$ will
be smaller than $u$ due to $n -2^{u+1} < 2^u -1$. For this choice of $r'$, condition (i) fails, thus
condition (ii) has to hold. This implies that $n-2^{u+1}-(2^{r'}-1) = l 2^{r'}$.
 This integer $l$ can be only $1$ because $n-2^{u+1} < 2^{r'+1}$.
Thus, $n-2^{u+1} =
2^{r'+1}-1$.

\par

Therefore if $x^{2^u} t^{n-2^{u+1}} \not\in \mathcal
R_+$, then we have two possible cases:
\begin{enumerate}[(a)]
\item
$n=2^{r+1}+2^u-1$, where we chose $r \geq 0$ 
 so that $2^{r+1} < n < 2^{r+2}$, this implied $u \leq r$,
\item
$n = 2^{u+1} + 
2^{r'+1}-1$, 
where we chose $r' \geq 0$  so that $2^{r'} \leq n-2^{u+1} < 2^{r'+1}$,
this implied $r' < u$. 
\end{enumerate}

By Proposition~\ref{thm:power2} 
in both cases (a) and (b) we have $\mathcal R_+ = \im \varrho_n$, unless
there are positive integers $a > b$ such that $n=2^a+2^b-1$. Moreover in these
exceptional cases, when $n=2^a+2^b-1$ with $a>b>0$, the linear space
$\mathcal R_+$ has to contain all the monomials $x^{2^u} t^{n-2^{u+1}}$
except possibly those with $u=a-1$ (in case (b)) and $u=b$ (in case (a)).

\begin{prop}
If $n=2^a+2^b-1$ with $a>b>0$ and $u=a-1$ or $u=b$, then the monomial
$x^{2^u} t^{n-2^{u+1}}$ does not appear as a summand in any $R(s,m) \in \mathcal R_+$.
\end{prop}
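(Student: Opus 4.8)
The plan is to compute, for each generator $R(s,m)$ with $(s,m)\in V_R$, the coefficient of $x^{2^u}t^{n-2^{u+1}}$ and to show that it is always even. First I would read off from the expansion of $R(s,m)$ that this monomial can arise only from the term whose index $i$ satisfies $m+1+i=2^u$, that is $m+i=2^u-1$ with $0\leq i\leq m$; using $2m+2i+2=2^{u+1}$, the contribution of such a term is $\binom{m}{i}\binom{s+1+m-i}{\,n-2^{u+1}-s\,}$. So the whole task reduces to a parity statement about this product of binomial coefficients, which I would attack with the criterion of \cite{Gla} used throughout the previous sections.

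The first and decisive reduction concerns the factor $\binom{m}{i}$. Since $i\leq m$ and $m+i=2^u-1$ force $i\leq 2^{u-1}-1<2^u$, the integer $m=(2^u-1)-i$ is the bitwise complement of $i$ in $u$ binary digits. By \cite{Gla}, $\binom{m}{i}$ is odd precisely when every binary digit of $i$ is at most the corresponding digit of $m$; as $m$ and $i$ have complementary supports this happens only for $i=0$, hence $m=2^u-1$. Consequently every generator with $m\neq 2^u-1$ contributes an even coefficient automatically, and the only possibly odd contribution comes from $R(s,2^u-1)$, where the coefficient equals $\binom{s+2^u}{\,n-2^{u+1}-s\,}$; here the admissibility $(s,2^u-1)\in V_R$ amounts to $0\leq s\leq n-1-2^{u+1}$.

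It then remains to prove that $\binom{s+2^u}{\,n-2^{u+1}-s\,}$ is even for every $s$ in this range, and I would treat the two listed cases in parallel. In case (a), where $u=b$, the lower entry is $2^a-2^b-1-s$ and together with the upper entry $s+2^b$ it sums to $2^a-1$; as the lower entry lies in $[0,2^a)$, the upper entry is its bitwise complement in $a$ digits, so by \cite{Gla} the binomial is odd only when the lower entry vanishes, i.e.\ $s=2^a-2^b-1$. In case (b), where $u=a-1$, the lower entry is $2^b-1-s$, which added to $s$ gives $2^b-1$, so it is the complement of $s$ in $b$ digits; since $b\leq a-1$ forces $s<2^{a-1}$, the low digits of the upper entry $s+2^{a-1}$ coincide with those of $s$, and \cite{Gla} again makes the binomial odd only when the lower entry vanishes, i.e.\ $s=2^b-1$. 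In both situations the unique offending value of $s$ is exactly one more than the $V_R$-bound $n-1-2^{u+1}$ (which equals $2^a-2^b-2$ and $2^b-2$ respectively), hence lies outside the admissible range, and the coefficient is even for all admissible $s$.

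The ideas are elementary, so the main obstacle I expect is bookkeeping: one must keep the two binomial factors and the constraint $(s,m)\in V_R$ aligned so as to recognise that the single value of $s$ producing an odd second factor falls just beyond the allowed interval. The two cases rest on slightly different complementation relations (sum $2^a-1$ versus complement in $b$ digits), so I would verify each boundary computation separately rather than trying to unify them prematurely.
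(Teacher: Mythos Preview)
Your proof is correct and follows essentially the same route as the paper's: both first reduce via the factor $\binom{m}{2^u-1-m}$ to the single case $m=2^u-1$, and then show that $\binom{s+2^u}{n-2^{u+1}-s}$ is even for all admissible $s$ by the complementation argument (sum equal to $2^a-1$ when $u=b$, and stripping the $2^{a-1}$ bit and using sum equal to $2^b-1$ when $u=a-1$). The only cosmetic difference is that the paper invokes Lemma~\ref{binomlem} (the packaged statement that $\binom{K-p}{p}$ is even for all $p>0$ when $K+1$ is a power of $2$) at each step, whereas you unwind the Glaisher criterion directly.
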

\begin{proof}
In the relation $R(s,m)$ the monomial $x^{2^u}t^{n-2^{u+1}}$ has the
coefficient 
\begin{multline}\label{allofthese}
\binom{m}{2^u-m-1} \binom{m+s+1-(2^u-m-1)}{n-2m-2(2^u-m-1)-2-s}
= \\ 
=\binom{m}{2^u-1-m} \binom{2m+2-2^u+s}{n-2^{u+1}-s}.
\end{multline}
The first binomial coefficient is even unless $2^u=m+1$ according to
Lemma~\ref{binomlem} for $K=2^u-1$, hence we only consider the case $m=2^u-1$.
Then the
second binomial coefficient becomes $\binom{2^u+s}{n-2^{u+1}-s}$ with $s$
running from $0$ to $n-2m-3=n-2^{u+1}-1$
as follows from the condition $(s,m) \in V_R$.
 For $u=b$ we have $n-2^u=2^a-1$
and Lemma~\ref{binomlem} with $K=n-2^u$ implies that all of these
coefficients (\ref{allofthese}) are even. For $u=a-1$ we have $2^{u+1}+s \geq 2^{u+1} = 2^{a}$ and
$n-2^{u+1}-s \leq n-2^a = 2^b-1 < 2^{a-1}$, thus the criterion of \cite{Gla} gives the same
results for $\binom{2^u+s}{n-2^{u+1}-s}$ and
$\binom{2^u+s-2^{a-1}}{n-2^{u+1}-s} = \binom{s}{2^b-1-s}$. Since
$n-2^{u+1}-s \geq 1$, Lemma~\ref{binomlem} proves that $\binom{s}{2^b-1-s}$ is
even for all choices of $s$, as claimed.
\end{proof}

This means that if $n=2^a+2^b-1$, $a>b>0$, then the monomials 
$x^{2^{a-1}} t^{n-2^{a}}$ and $x^{2^b} t^{n-2^{b+1}}$
never appear as summands in any $R(s,m) \in \mathcal R_+$ and hence the algorithm of
Proposition~\ref{thm:power2}  leads to  $x^ht^{n-2h} \in \mathcal
R_+$ when $u=a-1$ or $u=b$.
Consequently, $\mathcal R_+$ is spanned by all monomials from $xt^{n-2}$ to
$x^{\frac{n-1}{2}} t$ except $x^{2^{a-1}}t^{n-2^a}$ and
$x^{2^b}t^{n-2^{b+1}}$, which span a linear space complementary to $\mathcal
R_+$.

\par       

To summarize, when $n$ is odd, then we have three possibilities:
\begin{itemize}
\item If $n \neq 2^a+2^b-1$ for any $a>b>0$, then $\mathcal R_+ = \im
\varrho_n$.
\item If $n = 3 \cdot 2^b-1$ for some $b>0$, then
$$\mathcal R_+ = \langle xt^{n-2}, \dots, x^{2^b-1}t^{n-2^{b+1}+2},
x^{2^b+1}t^{n-2^{b+1}-2}, \dots, x^{\frac{n-1}{2}} t \rangle.$$
\item If $n = 2^a+2^b-1$ for some $a>b+1$, $b>0$, then
\begin{align*}
\mathcal R_+ = \langle xt^{n-2}, &\dots , x^{2^b-1}t^{n-2^{b+1}+2},
x^{2^b+1}t^{n-2^{b+1}-2}, \dots,\\
& x^{2^{a-1}-1}t^{n-2^a+2}, x^{2^{a-1}+1}t^{n-2^a-2},\dots,
x^{\frac{n-1}{2}} t \rangle.
\end{align*}
\end{itemize}
 Finally, the relation $R_0$ contains the monomials $x^{2^{a-1}}
t^{n-2^a}$ and $x^{2^b} t^{n-2^{b+1}}$ with the coefficients
$\binom{n-2^{a-1}-1}{n-2^a}= \binom{2^b+2^{a-1}-2}{2^b-1}$ and
$\binom{n-2^b-1}{n-2^{b+1}}= \binom{2^a-2}{2^a-2^b-1}$,
both of which are of the form $\binom{{\mathrm {even}}}{{\mathrm {odd}}}$ and thus are  even by
the criterion of \cite{Gla}. Hence $\mathcal R_+ = \mathcal R$.

\subsubsection{Proofs of the statements}

\begin{proof}[Proof of Theorem~\ref{nullcob}]
By the above, if $n \neq 2^a + 2^b -1$, $a>b \geq 0$, then any oriented
$n$-manifold which has a fold map in codimension $-1$ is unoriented
null-cobordant. Unless $n$ is divisible by $4$, this implies that $M$ is also
oriented null-cobordant, see \cite{Wa}.

\par

In the case of $n = 2^a + 2^b -1$, $a>b \geq 0$, the Stiefel-Whitney
characteristic numbers of $M$ which belong to the complete preimage
$\varrho^{-1}(\mathcal R)$ have to vanish. This leaves the following
possibilities for nonzero characteristic numbers:
\begin{itemize}
\item if $n$ is a power of $2$, then $\varrho^{-1}(\mathcal R)$ is spanned by
$w_n+w_2^{n/2}$ and all monomials except $w_n$ and $w_2^{n/2}$. In this case
$[M] \in \mathfrak A^1$.
\item if $n=2^a+2^b-1$, $a>b>0$, then $\varrho^{-1}(\mathcal R)$ is spanned by
all monomials of length not equal to either $2^b$ or $2^{a-1}$ as well as
the relations in $\mathcal I$ corresponding to these exceptional lengths.
When $a=b+1$, the two lengths coincide and we get that $[M] \in \mathfrak
B^1$, while in the other case we get that $[M] \in \mathfrak C^2$.
\end{itemize}

\par

In the remaining case of $n$ divisible by $4$, we need to additionally
calculate the Pontryagin characteristic numbers of $M$ to determine its
oriented cobordism class. Theorem~\ref{szingtusk} shows that all the rational 
Pontryagin classes of $M$ except $p_1^{\Q}(TM)$ have to vanish, hence the only
Pontryagin number that may be nonzero is $p_1^{n/4}[M]$. If we additionally
assume that $M$ is unoriented null-cobordant, then this number has to be
even as its reduction modulo $2$ is the Stiefel-Whitney characteristic
number $w_2^{n/2}[M]$.
\end{proof}
\begin{proof}[Proof of Theorem~\ref{nullcoborising}]
By Proposition~\ref{Kgamma} we know that $TM \+ \varepsilon^1 = \zeta^2 \+
\varepsilon^{n-1}$ for some $2$-dimensional bundle $\zeta$. Hence
$w(TM)=1+w_1(\zeta)+w_2(\zeta)$, and we will denote the characteristic class
$w_i(\zeta)$ by $w_i$ for brevity. The only nonzero total Steenrod squares
of these classes are $Sq(w_1)=w_1(1+w_1)$ and $Sq(w_2)=w_2+w_2w_1+w_3+w_2^2=
w_2(1+w_1+w_2)$. Thus, we can compute the Dold relation corresponding to
the polynomial $w_1^aw_2^b$ with $a+2b \leq n-1$, $a,b \geq 0$, as the
degree $n$ part of the expression
$$
\frac{w_1^a(1+w_1)^aw_2^b(1+w_1+w_2)^b}{1+w_1+w_2} =
w_1^a(1+w_1)^aw_2^b(1+w_1+w_2)^{b-1}.
$$
Setting $b=0$, $0 \leq a \leq n-1$ gives
\begin{multline*}
R(a) := \left[ \frac{w_1^a(1+w_1)^a}{1+w_1+w_2}\right]_{{\mathrm {deg}}=n} = \left[
\frac{w_1^a(1+w_1)^{a-1}}{1+\frac{w_2}{1+w_1}}\right]_{{\mathrm {deg}}=n} = \\
= \left[ w_1^a(1+w_1)^{a-1} \sum_{j=0}^{\infty}\frac{w_2^j}{(1+w_1)^j}\right]_{{\mathrm {deg}}=n} =
\sum_{j=0}^{\frac{n-a}{2}} w_1^a w_2^j \left[ (1+w_1)^{a-1-j}\right]_{{\mathrm {deg}}=n-a-2j}
=\\= \sum_{j=0}^{\frac{n-a}{2}} \binom{a-1-j}{n-a-2j} w_1^{n-2j} w_2^j.
\end{multline*}
Here we use the analytical definition of binomial coefficients: $\binom{u}{v} = 0$ if $v<0$, $\binom{u}{0}
= 1$ and $\binom{u}{v}= \frac{u(u-1)\cdots(u-v + 1)}{v!}$ in the other
cases. Choosing $a=n-2m$ for any $0 < m \leq n/2$ we get
\begin{align*}
R(n-2m) &= \sum_{j=0}^{m} \binom{n-2m-1-j}{2m-2j} w_1^{n-2j} w_2^j
=\\
&= \binom{n-2m-1}{2m} w_1^n + \dots +\binom{n-3m-1}{0} w_1^{n-2m} w_2^m
\end{align*}
with analytical binomial coefficients.
Here the exponent of $w_2$ in all the summands except the last one is less
than $m$, and the last summand has coefficient $1$. Therefore for all $0 < m
\leq n/2$ the monomial $w_2^m w_1^{n-2m}$ is linearly dependent on the
monomials with smaller exponents of $w_2$ and $R(n-2m)$. Consequently, all
the monomials $w_2 w_1^{n-2}, \ldots, w_2^m w_1^{n-2m}$ are linearly
dependent on $w_1^n$ and $\{ R(n-2m) : 1 \leq m \leq n/2 \}$. Evaluating
these classes on the fundamental class of $M$, we get that all
Stiefel-Whitney characteristic numbers of $[M]$ depend linearly on
$w_1^n[M]$ (with coefficients depending only on $n$). This condition either
defines the $1$-dimensional linear subspace $\mathfrak D^1 \leq \mathfrak
N_n$ or implies that $M$ is unoriented null-cobordant.
\end{proof}

\begin{proof}[Proof of Proposition~\ref{nullcobnagykodimkonnyu}]
Choose any index set $I = (i_1, \ldots, i_r)$ such that $r= \vert I \vert
\geq 2$ and $\sum_{j=1}^{r} i_j = n$. If for any $j$ we have $i_j \leq k$,
then $w_I=0$ due to the vanishing condition imposed on the Stiefel-Whitney
classes of $TM$. If $i_j \geq k+1$ for
$j= 1, \ldots r$, then for $J = (n-(r-1)(k+1), k+1, \ldots, k+1)$ the
characteristic numbers $w_I[M]$ and $w_J[M]$ coincide by
Corollary~\ref{spin}. But $w_{k+1}(TM)=0$ by \cite[Problem~8-B]{MiSta}, implying that
$w_J[M] =0$ and thus $w_I[M] =0$ whenever $|I| \geq 2$.
\end{proof}

\begin{proof}[Proof of Proposition~\ref{nullcobnagykodimkonnyu2}]
Perturb the Morin map to get
a cusp map \cite{Sad}.
By Proposition~\ref{karosztalyok} and \cite[Problem~8-B]{MiSta} the statement follows 
similarly to the previous one, details are left to the reader.
\end{proof}


\begin{thebibliography}{AAAAAA}

\bibitem[An85]{An01}
Y. Ando,
On the elimination of Morin singularities. J. Math. Soc. Japan {\bf 37} (1985), no. 3, 471--487.

\bibitem[An87]{An02}
\bysame,
On the higher Thom polynomials of Morin singularities,
Publ.\ RIMS, Kyoto Univ.\ {\bf 23} (1987), 
195--207.

\bibitem[An01]{An0}
\bysame,
Folding maps and the surgery theory on manifolds,
J. Math. Soc. Japan {\bf 53} (2001), no. 2, 357--382. 


\bibitem[An04]{An}
\bysame,
Existence theorems of fold maps,
Japan J. Math. {\bf 30} (2004), 29--73.

\bibitem[An07]{An2}
\bysame,
A homotopy principle for maps with prescribed Thom-Boardman singularities,
Trans. Amer. Math. Soc. {\bf 359} (2007), 489--515.

\bibitem[At61]{At}
M. F. Atiyah, Immersions and embeddings of manifolds, Topology {\bf 1} (1961), 125--132.


\bibitem[BH04]{BaHu}
A. Banyaga and D. Hurtubise,
A proof of the Morse-Bott lemma,
Expo. Math. {\bf 22} (2004), 365--373.

\bibitem[Boa67]{Bo}
J. M. Boardman, Singularities of differentiable maps, IHES Publ. Math. 
{\bf 33} (1967), 21--57. 


\bibitem[Bott70]{Bott}
R. Bott,
On a topological obstruction to integrability,
Global Analysis (Proc. Sympos. Pure Math., Vol. XVI, Berkeley, Calif., 1968) pp. 127--131 Amer. 
Math. Soc., Providence, R.I., 1970.



\bibitem[Ch83]{Che}
D. S. Chess, 
A note on the classes $[S^k_1(f)]$, Proc. Symp. Pure Math. {\bf 40} (1983), 
221--224. 

\bibitem[Do56]{Do}
A. Dold, Erzeugende der Thomschen Algebra $\mathfrak{N}$, Math. Z. {\bf 65} (1956), 25--35.


\bibitem[Ga78]{Gab}
A. M. Gabrielov,
Combinatorial formulas for Pontrjagin classes and ${\rm GL}$-invariant chains. 
(Russian) Funktsional. Anal. i Prilozhen. {\bf 12} (1978), no. 2, 1--7, 95.

\bibitem[Gl99]{Gla}
J. W. L. Glaisher, On the residue of a binomial-theorem coefficient 
with respect to a prime modulus, Quart. J. Pure App. Math. {\bf 30} (1899), 
150--156. 


\bibitem[GG73]{GoGu}
M. Golubitsky and V. Guillemin, Stable mappings and their singularities, Graduate Texts in 
Math. 14, Springer-Verlag, New York, 1973. 

\bibitem[GP07]{GePa}
H. Geiges and F. Pasquotto,
A formula for the Chern classes of symplectic blow-ups,
J. London Math. Soc. {\bf 76} (2007), 313--330.



\bibitem[MS74]{MiSta}
J. Milnor and J. D. Stasheff, Characteristic classes,
Ann. of Math. Studies, No. 76, 
Princeton Univ. Press, Princeton, N. J.; Univ. of Tokyo Press, Tokyo, 1974. 


\bibitem[OSS03]{OhmSaeSak}
T. Ohmoto, O. Saeki and K. Sakuma, 
Self-intersection class for singularities and its application to fold maps,
Trans. Amer. Math. Soc. {\bf 355} (2003), 3825--3838.

\bibitem[Po47]{Po}
L. S. Pontryagin,
Characteristic cycles on differentiable manifolds. (Russian) 
Mat. Sbornik N. S. {\bf 21 (63)} (1947), 233--284.

\bibitem[Ro52]{Ro}
V. A. Rohlin,
Intrinsic definition of Pontryagin's characteristic cycles. (Russian) 
Doklady Akad. Nauk SSSR (N.S.) {\bf 84} (1952), 449--452. 




\bibitem[Sad03]{Sad}
R. Sadykov,
The Chess conjecture, Alg. Geom. Topol. {\bf 3} (2003), 777--789.


\bibitem[SSS10]{SadSaeSak}
R. Sadykov, O. Saeki and K. Sakuma,
Obstructions to the existence of fold maps,
arXiv:1003.2754.


\bibitem[Sae92]{Sa1}
{O. Saeki},
Notes on the topology of folds,
J. Math. Soc. Japan {\bf 44} (1992), 551--566.


\bibitem[SS98]{SaeSak}
O. Saeki and K. Sakuma,
Maps with only Morin singularities 
and the Hopf invariant one problem, Math. Proc. Camb. Phil. Soc. {\bf 124} (1998), 501--511.




\bibitem[Wa60]{Wa}
C. T. C. Wall,
Determination of the cobordism ring, Ann. Math. {\bf 72} (1960), 292--311.
\end{thebibliography}
\end{document}